\newtheorem{thm}{Theorem}[section]
\newtheorem{prop}{Proposition}[section]
\newtheorem{cor}{Corollary}[section]
\newtheorem{lem}{Lemma}[section]
\newtheorem*{thm_AF_I}{Theorem I}
\newtheorem*{thm_AF_II}{Theorem II}
\newtheorem*{thm_FRY}{Theorem III}
\theoremstyle{definition}
\newtheorem{remark}{Remark}[section]
\newtheorem{claim}{Claim}[section]
\newtheorem{defn}{Definition}[section]
\newtheorem{notation}{Notation}[section]
\newtheorem{example}{Example}[section]
\newtheorem*{problem}{Problem}%[section] 
\newtheorem{fact}{Fact}[section]
\newtheorem*{property}{Property}%[section]
\newtheorem{compl}{Complement}[section] 
\newtheorem{setting}{Setting}[section] 
\newtheorem*{observation}{Observation}%[section]
\newtheorem*{assumption_A}{Assumption}
\def \bthm {\begin{thm}}
\def \bthmb {\begin{thm} \mbox{}}
\def \ethm {\end{thm}} 
\def \bprop {\begin{prop}}
\def \bpropb {\begin{prop} \mbox{}}
\def \eprop {\end{prop}} 
\def \blem {\begin{lem}}
\def \blemb {\begin{lem} \mbox{}}
\def \elem {\end{lem}} 
\def \bcor {\begin{cor}}
\def \bcorb {\begin{cor} \mbox{}}
\def \ecor {\end{cor}}
\def \bdefn {\begin{defn}} 
\def \bdefnb {\begin{defn} \mbox{}}
\def \edefn {\end{defn}} 
\def \bfact {\begin{fact}} 
\def \bfactb {\begin{fact} \mbox{}}
\def \efact {\end{fact}} 
\def \bnot {\begin{notation}} 
\def \bnotb {\begin{notation} \mbox{}}
\def \enot {\end{notation}} 
\def \brem {\begin{remark}} 
\def \bremb {\begin{remark} \mbox{}} 
\def \erem {\end{remark}} 
\def \bpty {\begin{property}} 
\def \bptyb {\begin{property} \mbox{}} 
\def \epty {\end{property}} 
\def \bexp {\begin{example}} 
\def \bexpb {\begin{example} \mbox{}} 
\def \eexp {\end{example}} 
\def \bcl {\begin{claim}}
\def \bclb {\begin{claim} \mbox{}} 
\def \ecl {\end{claim}} 
\def \bobs {\begin{observation}}
\def \bobsb {\begin{observation} \mbox{}} 
\def \eobs {\end{observation}} 
\def \bprob {\begin{problem}}
\def \bprobb {\begin{problem} \mbox{}}
\def \eprob {\end{problem}}
\def \btab {\begin{tabular}}
\def \btabb {\begin{tabular} \mbox{}} 
\def \etab {\end{tabular}} 
\def \bary {\begin{array}}
\def \baryb {\begin{array} \mbox{}} 
\def \eary {\end{array}} 
\def \bpf {\begin{proof}}
\def \bpfb {\begin{proof} \mbox{}} 
\def \epf {\end{proof}} 
\def \benum {\begin{enumerate}}
\def \eenum {\end{enumerate}} 
\def \bit {\begin{itemize}}
\def \eit {\end{itemize}} 
\def \edc {

\pagestyle{plain}

\begin{document}
\baselineskip 6 mm

\vspace*{-10mm} 

\vskip 10mm

\title[]{Boundedness of diffeomorphism groups of manifold pairs \\[2mm] 
--- Circle case ---}

\author[Kazuhiko Fukui]{Kazuhiko Fukui} 
\address{Professor Emeritus \\
Kyoto Sangyo University \\
Kyoto, Japan}
\email{fukui@cc.kyoto-su.ac.jp} 

\author[Tatsuhiko Yagasaki]{Tatsuhiko Yagasaki}
\address{Professor Emeritus \\
Kyoto Institute of Technology \\
Kyoto, Japan}
\email{yagasaki@kit.ac.jp}

\subjclass[2020]{Primary 57R50, 57R52;  Secondary 37C05.}  
\keywords{diffeomorphism, boundedness, uniformly perfect, commutator length}

\maketitle

\begin{abstract}{}
In this paper we study boundedness of conjugation invariant norms on diffeomorphism groups of manifold pairs. 
For the diffeomorphism group ${\cal D} \equiv {\rm Diff}(M,N)_0$ of a closed manifold pair $(M, N)$ with $\dim N \geq 1$, 
first we clarify the relation among the fragmentation norm, the conjugation generated norm, 
the commutator length $cl$ and the commutator length with support in balls $clb$ and show that 
${\cal D}$ is weakly simple relative to a union of some normal subgroups of ${\cal D}$. 
For the boundedness of these norms, this paper focuses on the case where $N$ is a union of $m$ circles.
In this case, the rotation angle on $N$ induces a quasimorphism $\nu : {\rm Isot}(M, N)_0 \to \IR^m$, 
which determines a subgroup $A$ of $ \IZ^m$ and 
a function $\widehat{\nu} : {\cal D} \to \IR^m/A$. 
If ${\rm rank}\,A = m$, these data leads to an upper  bound of $clb$ on ${\cal D}$ modulo 
the normal subgroup ${\cal G} \cong {\rm Diff}_c(M - N)_0$. 
Then, some upper  bounds of $cl$ and $clb$ on ${\cal D}$ are obtained from those on ${\cal G}$. 
As a consequence, the group ${\cal D}$ is uniformly weakly simple and bounded when $\dim M \neq 2,4$. 
On the other hand, if ${\rm rank}\,A < m$, then the group ${\cal D}$ admits a surjective quasimorphism, so  it is unbounded and not uniformly perfect. 
We examine the group $A$ in some explicit examples. 
\end{abstract} 

\thispagestyle{empty}

\tableofcontents

\section{Introduction and statement of results}

K.~Abe and K.~Fukui \cite{Abe1, AF1, AF2, AF3} studied the uniform perfectness of the group 
${\rm Diff}(M,N)_0$ of $C^\infty$ diffeomorphisms of a $C^\infty$ manifold $M$ which preserves a submanifold $N$. 
Let $P : {\rm Diff}(M,N)_0 \lra {\rm Diff}(N)_0$ denote the restriction map. 
Their main results are summarized as follows (\cite{AF1, AF3}). 

\begin{thm_AF_I} \mbox{}
\benum 
\item[{\rm (1)}] Suppose $M$ is a connected $C^\infty$ manifold without boundary and $N$ is a proper $C^\infty$ submanifold of $M$ with $\dim N \geq 1$. 
Then, the group ${\rm Diff}_c(M,N)_0$ is perfect. 
\item[{\rm (2)}] $cld\,{\rm Diff}_c(\IR^m, \IR^n)_0 \leq 2$ for $1 \leq n \leq m$. 
\eenum
\end{thm_AF_I}

\begin{thm_AF_II} Suppose $M$ is a closed $C^\infty$ manifold and $N$ is closed $C^\infty$ submanifold of $M$ with $\dim N \geq 1$. 
\benum 
\item[{\rm (1)}] If ${\rm Diff}(N)_0$ and ${\rm Diff}_c(M - N)_0$ are uniformly perfect and $|\pi_0 {\rm Ker}\,P| < \infty$, 
then ${\rm Diff}(M,N)_0$ is uniformly perfect. 
\item[{\rm (2)}] In the case $\dim N = 1$ $($i.e, $N$ is a finite disjoint union of circles$)$, if $|\pi_0 {\rm Ker}\,P| = \infty$, then 
${\rm Diff}(M,N)_0$ admits a unbounded quasimorphism, so that it is not uniformly perfect.
\eenum 
\end{thm_AF_II}

\noindent 
Using this criterion on $\pi_0 {\rm Ker}\,P$, they showed that for a knot $K$ of the 3-sphere $\IS^3$
the group ${\rm Diff}(\IS^3,K)_0$ is uniformly perfect if and only if $K$ is a torus knot. 

In this paper we continue the comprehensive study of conjugation invariant norms $cl$, $clb$, $\eta$ and $\zeta$ on the group ${\rm Diff}(M,N)_0$. 
Here, $cl$ is the commutator length, $clb$ is the commutator length with support in balls, $\eta$ is the fragmentation norm 
and $\zeta_g$ is the conjugation generated norm with respect to $g \in {\rm Diff}(M,N)_0^\times$. 
The norm $cl$ is related to the uniform perfectness and $\zeta_g$ is related to the uniform (weak) simplicity. 
The norm $clb$ seems to be rather artificial, but it is important for our purpose since it dominates the norms $cl$ and $\zeta_g$ 
and is dominated by $\eta$. 
We also note that, to define the norms $clb$ and $\eta$ on ${\rm Diff}(M^n, N^\ell)_0$, 
it is necessary to treat the family of finite disjoint unions of $n$-balls $D= \cup_i D_i$ in $M$ such that 
each $n$-ball $D_i$ satisfies either $D_i \subset M - N$ or $(D_i, D_i \cap N)$ is 
diffeomorphic to the standard ball pair $(\IB^n, \IB^\ell)$ (cf. \S4.1, \S4.2). 

First we observe the relations and finiteness of these norms on the group ${\rm Diff}_c(M, N)_0$. 

\bthm\label{} 
Suppose $M$ is a connected $C^\infty$ manifold without boundary and $N$ is a proper $C^\infty$ submanifold of $M$ with $\dim N \geq 1$. 
\benum
\item[{\rm (1)}] $cl\,f \leq clb\,f \leq 2\eta(f) < \infty$ \ $(f \in {\rm Diff}_c(M, N)_0)$. 
\item[{\rm (2)}] Suppose $N$ has finitely many connected components $N_i$ $(i \in [m])$. Then, \\
$\zeta_g(f) \leq 4 clb\,f$ \ $(f \in {\rm Diff}_c(M, N)_0 - {\cal K}{\cal P})$ 
and ${\rm Diff}_c(M, N)_0$ is weakly simple relative to ${\cal K}{\cal P}$. 
\eenum 
\ethm 

The statement (1) follows from Fragmentation Lemma (Proposition~\ref{prop_frag}) and Theorem I (2). 
For the statement (2), consider the group homomorphisms \\
\hspp $P_i : {\rm Diff}_c(M, N)_0 \lra {\rm Diff}_c(N_i)_0$ : $P_i(f) = f|_{N_i}$ \ \ $(i \in [m])$. \\
Then, the group ${\rm Diff}_c(M, N)_0$ includes the normal subgroups ${\rm Ker}\,P_i$ $(i \in [m])$ 
and any element of ${\cal K \cal P} := \bigcup_{i \in [m]} {\rm Ker}\,P_i$ does not normally generate ${\rm Diff}_c(M, N)_0$. 
On the other hand, for any $g \in {\rm Diff}_c(M, N)_0 - {\cal K \cal P}$ Epstein's argument (\cite{Ep}) yields  
the inequality in the statement (2) and the finiteness of $\zeta_g$ means that $g$ normally generates ${\rm Diff}_c(M, N)_0$. 
The notion of weak simplicity is introduced to describe this situation (cf.\,\S2.1).
In the case $m = 1$, the subset ${\cal K \cal P}$ itself is a normal subgroup of ${\rm Diff}_c(M, N)_0$ and 
this notion reduces to the notion of relative simplicity (cf.\,\cite{KKKMMO}). 

As for the evaluation of the norms $cl$ and $clb$, this paper focuses on the case that $\dim N = 1$.  
The case of $\dim N \geq 2$ is discussed in a succeeding paper. 
Suppose $M$ is a closed connected $C^\infty$ manifold of $\dim M \geq 2$ and $N$ is a disjoint union of circles $S_i$ $(i \in [m])$ in $M$. 
To detect the behavior of each $f \in {\rm Diff}(M,N)_0$ around the submanifold $N$, 
we can use a surjective quasimorphism \\
\hspp $\nu = (\nu_i)_{i \in [m]} : {\rm Isot}(M, N)_0 \lra \IR^m$ \\
defined using the rotation angle on the circles $S_i$ $(i \in [m])$. 
For each $i \in [m]$ take a universal cover $\pi_i : \IR \lra \IR/\IZ \approx S_i$ and fix a point $p_i \in S_i$. 
Then, the $i$-th component $\nu_i$ is defined by \\
\hspp $\nu_i(F) =$ [the rotation angle of the path $F(p_i, \ast)$ in $S_i$]. \\
It restricts to a group homomorphism $\nu| : {\rm Isot}(M, N)_{\id, \id} \lra \IZ^m$, which has more familiar values, 
\hspp $\nu_i(F) =$ [the degree of the loop $F(p_i, \ast)$ in $S_i$] \ $(F \in {\rm Isot}(M, N)_{\id, \id})$. \\
This determines a subgroup $A := {\rm Im}\,\nu|$ of $\IZ^m$ and the quasimorphism $\nu$ induces a surjective map \\
\hspp $\widehat{\nu} : {\rm Diff}(M, N)_0 \lra \IR^m/A$, \ \ $\widehat{\nu}(f) = \nu(F) + A \subset \IR^m$ \ \ $(F \in {\rm Isot}(M, N)_{\id, f})$.  \\
The subgroup $A$ is related to $\pi_0 {\rm Ker}\,P$ in Theorem II by $\pi_0 {\rm Ker}\,P = \IZ^m/A$. 
However, $A$ itself and the affine lattice $\widehat{\nu}(f)$ in $\IR^m$ contain more informations than the quotient $\IZ^m/A$.  
Due to their definitions, the maps $\nu$ and $\widehat{\nu}$ include no information on the normal subgroup of ${\rm Diff}(M, N)_0$, \\
\hspp ${\cal G} := {\rm Diff}(M; \, {\rm rel}\  {\cal U}(N))_0 \cong {\rm Diff}_c(M-N)_0$, \\
where ${\cal U}(N)$ denotes the neighborhood system of $N$ in $M$. 
To separate this factor we introduce the norms $cl$ and $clb$ modulo ${\cal G}$, which are denoted by $cl_{/\cal G}$ and $clb_{/\cal G}$ and 
satisfy the following inequalities (cf.\,\S2.2, \S5.4 Proposition~\ref{prop_nu_diagram_G}\,(2)) \\ 
\hspp $cl_{/\cal G}\,f \leq cl\,f \leq cl_{/\cal G}\,f + cld\,{\cal G}$ \ \ and \ \ $clb_{/\cal G}\,f \leq clb\,f \leq clb_{/\cal G}\,f + clbd\,{\cal G}$ \ \ \ $(f \in {\rm Diff}(M, N)_0)$. 

In \cite{FRY} we have studied the norms $cl$ and $clb$ 
on the group ${\rm Diff}^\infty(L; \ {\rm rel}\ {\cal U}(\partial L))_0 \cong 
{\rm Diff}^\infty_c({\rm Int}\,L)_0$ for a compact manifold $L$ 
, with a modification of Tsuboi's arguments in \cite{Ts2, Ts3, Ts4}. 
This was a basis for the study of these norms on the diffeomorphism groups of open manifolds in \cite{FRY}. 
Here we can apply these results to the group ${\cal G}$ so to obtain some upper bounds for $cld\,{\cal G}$ and $clbd\,{\cal G}$ for $\dim M \neq 2,4$. 
Therefore, the above formulation has a practical sense. 

For $\IR^m$ we consider the norm $\ds \| x \| := \max_{i \in [m]} |x_i|$ $(x = (x_i)_{i \in [m]} \in \IR^m)$. 
For a subset $C \subset \IR^m$ let $\| C \| := \{ \| x \| \mid x \in C \} \subset \IR$. 
We have the following estimates of $cl\,f$ and $clb\,f$ for $f \in {\rm Diff}(M, N)_0$ 
(cf.~Theorem~\ref{thm_clb_G}\,(2), Proposition~\ref{prop_nu_diagram_G}\,(1)). 

\bthm\label{thm_clbf} Suppose $M$ is a closed $C^\infty$ $n$-manifold of $n \geq 2$ and 
$N$ is a finite disjoint union of circles in $M$. 
Let ${\cal G} := {\rm Diff}(M; \, {\rm rel}\  {\cal U}(N))_0 \cong {\rm Diff}_c(M-N)_0$.
Then, for any $f \in {\rm Diff}(M, N)_0$ the following holds. 
\benum 
\item[{\rm (1)}] If $\min \| \widehat{\nu}(f) \| < \ell \in \IZ_{\geq 1}$, then \\
\hsp $cl_{/\cal G}\,f \leq clb_{/\cal G}\,f \leq 2\ell + 1$ \ \ and \ \ $cl\,f \leq 2\ell + 1 + cld\, {\cal G}$, \ \ $clb\,f \leq 2\ell + 1 + clbd\, {\cal G}$. 

\vskip 1mm 
\item[{\rm (2)}] $cl\,f  \geq cl_{/\cal G}\,f \geq \sfrac{1}{\,4\,} \big(\min \| \widehat{\nu}(f) \| + 1\big)$ \ \ if $f \in {\rm Diff}(M, N)_0 - {\cal G}$.
\eenum
\ethm 

The uniform boundedness of the norms $cl$ and $clb$ on the whole group ${\rm Diff}(M, N)_0$ is detected by ${\rm rank}\,A$. 
If ${\rm rank}\,A < m$, we can find a group epimorphism $\rho : \IR^m/A \to \IR$ 
such that $\phi := \rho\, \widehat{\nu} : {\rm Diff}(M, N)_0 \lra \IR$ is a surjective quasimorphism. 
This implies the following conclusion (cf. \S2.4.1, \S5.4 Proposition~\ref{prop_nu_diagram}\,(5)). 

\begin{prop}\label{prop_rank<m} Suppose $M$ is a closed $C^\infty$ $n$-manifold of $n \geq 2$ 
and $N$ is a disjoint union of $m$~circles in $M$. 
If ${\rm rank}\,A < m$, then the group ${\rm Diff}(M, N)_0$ admits a surjective quasimorphism, 
so that it is neither bounded nor uniformly perfect. 
\end{prop}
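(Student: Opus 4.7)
The plan is to build a surjective quasimorphism $\phi : {\rm Diff}(M, N)_0 \to \IR$ by composing $\widehat{\nu}$ with an $\IR$-linear form that annihilates $A$, and then to invoke the standard consequence that a group admitting a surjective quasimorphism is neither bounded nor uniformly perfect. The hypothesis ${\rm rank}\,A < m$ enters precisely at the step where such a functional is chosen.

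\smallskip

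\noindent\textbf{Construction of $\phi$.} Since $A \subset \IZ^m$ has $\IZ$-rank $r < m$, its real linear span $\IR A$ is a proper $\IR$-subspace of $\IR^m$. Pick a nonzero $\IR$-linear functional $\widetilde{\rho} : \IR^m \to \IR$ vanishing on $\IR A$; it is automatically surjective and annihilates $A$, so it descends to a continuous surjective group homomorphism $\rho : \IR^m/A \to \IR$. Set $\phi := \rho \circ \widehat{\nu}$, equivalently $\phi(f) := \widetilde{\rho}(\nu(F))$ for any $F \in {\rm Isot}(M, N)_{\id, f}$; this is well-defined because two such $F$'s differ by a loop in ${\rm Isot}(M, N)_{\id, \id}$, contributing an element of $A$ that $\widetilde{\rho}$ kills. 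Surjectivity of $\phi$ follows immediately from the stated surjectivity of $\widehat{\nu}$ and surjectivity of $\rho$ by construction.

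\smallskip

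\noindent\textbf{Quasimorphism property.} For $f, g \in {\rm Diff}(M, N)_0$ with chosen isotopies $F_f, F_g$, the concatenation $F_f \ast (f \circ F_g)$ is an isotopy from $\id$ to $fg$. Since $\nu$ is a quasimorphism on ${\rm Isot}(M, N)_0$, and the rotation angle of $f \circ F_g(p_i, -)$ differs from that of $F_g(p_i, -)$ by a uniformly bounded amount (each lift of the circle diffeomorphism $f|_{S_i}$ to $\IR$ commutes with integer translation and so differs from the identity by a $1$-periodic, hence bounded, function), there is a uniform $C > 0$ with $\| \nu(F_f \ast f F_g) - \nu(F_f) - \nu(F_g) \| \leq C$. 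Applying the Lipschitz linear map $\widetilde{\rho}$ yields $|\phi(fg) - \phi(f) - \phi(g)| \leq \| \widetilde{\rho} \| \cdot C$, so $\phi$ has bounded defect, i.e., is a quasimorphism.

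\smallskip

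\noindent\textbf{Consequences and main obstacle.} A standard computation shows that $|\phi([a, b])|$ is bounded by a constant depending only on the defect of $\phi$, and hence $|\phi(g)| = O(cl\,g)$ on the commutator subgroup. Taking $g_n$ with $\phi(g_n) = n$ (possible by surjectivity) forces $cl\, g_n \to \infty$. Since ${\rm Diff}(M, N)_0$ is perfect by Theorem I\,(1), $cl$ is a conjugation-invariant norm on the whole group, and its unboundedness shows that ${\rm Diff}(M, N)_0$ is neither bounded nor uniformly perfect. The main obstacle is the defect estimate for $\phi$ --- specifically the bound $\nu(f \circ F) = \nu(F) + O(1)$ describing how rotation angles transform under left multiplication by a circle diffeomorphism --- and I would invoke Proposition~\ref{prop_nu_diagram}\,(5) rather than re-derive it from scratch.
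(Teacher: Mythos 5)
Your construction is the paper's: pick a nonzero $\IR$-linear functional on $\IR^m$ vanishing on the proper subspace $\IR\langle A \rangle$, descend it to $\IR^m/A$, precompose with $\widehat{\nu}$, and conclude via the standard ``surjective quasimorphism $\Rightarrow$ unbounded and not uniformly perfect'' implication (Fact~\ref{fact_ubdd_qm}). The one place where the paper and you diverge is the verification that the result is a quasimorphism, and your version has a real gap there. You argue that the defect is controlled because a lift $\widetilde{f}$ of $f|_{S_i}$ satisfies $\widetilde{f}(x+1)=\widetilde{f}(x)+1$, so $\widetilde{f}-\mathrm{id}$ is $1$-periodic and hence bounded. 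But that bound depends on $f$: as you range over $f \in {\rm Diff}(M,N)_0$ the sup of $|\widetilde{f}(x)-x|$ is unbounded, and this is exactly what a defect estimate cannot tolerate. The uniform bound $|\lambda(f\gamma)-\lambda(\gamma)| < 1$ requires in addition that $\widetilde{f}$ be monotonically increasing, i.e.\ that $f|_{S_i}$ be orientation-preserving; periodicity alone fails, as the paper stresses in Remark~\ref{rem_lambda}. The correct citation for the estimate you want is Fact~\ref{fact_lambda}\,(5) (equivalently Fact~\ref{fact_mu}\,(5) or Lemma~\ref{lem_mu_isot}\,(1)), not Proposition~\ref{prop_nu_diagram}\,(5), which is the disjoint $\mathrm{rank}\,A=m$ case and says nothing about rotation-angle transformation.

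The paper avoids this hands-on estimate entirely: it takes as input that $\nu$ is already a quasimorphism on ${\rm Isot}(M,N)_0$ with defect $1$ (Fact~\ref{fact_nu}, which absorbs the monotonicity point once), composes with the linear form (Fact~\ref{fact_multi-qm}\,(3)), and then transfers across the epimorphism $R : {\rm Isot}(M,N)_0 \to {\rm Diff}(M,N)_0$ using the general principle that a map on the target is a quasimorphism iff its pullback is (Fact~\ref{fact_multi-qm}\,(2)). This packaging is cleaner and sidesteps having to re-choose isotopies and track concatenations versus compositions. Your final ``consequences'' paragraph re-derives Fact~\ref{fact_ubdd_qm}; it is fine, though you do not need perfectness of ${\rm Diff}(M,N)_0$ for the conclusion (an unbounded $cl$ already shows ``not uniformly perfect'' and the surjective quasimorphism gives a genuinely unbounded conjugation-invariant norm via homogenization, regardless of whether the group is perfect).
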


In the case that ${\rm rank}\,A = m$, the quotient group $\IZ^m/A$ is a finite abelian group and we can introduce the following quantities : 
\hsh $k := \ds \max_{i \in [m]} k_i \in \IZ_{\geq 1}$ 
\ \ and \ \ $\widehat{k} := 2\lfloor k/2\rfloor + 3$ \\[1mm] 
where $e_i$ $(i \in [m])$ is the standard basis of $\IR^m$ and $k_i := {\rm ord}\,[e_i] \in \IZ_{\geq 1}$ in $\IZ^m/A$ for each $i \in [m]$. 
Since any lattice $x + A \subset \IR^m$ $(x \in \IR^m)$ meets the rectangle $\prod_{i \in [m]} (-k_i/2, k_i/2]$, 
we obtain the following upper bounds for $cl$, $clb$ and $clb_{/{\cal G}}$ 
(cf. \S6.2 Theorem~\ref{thm_clb_G}, Corollary~\ref{cor_clb_G}, Theorem III). 

\bthm\label{thm_rank=m} Suppose $M$ is a closed $C^\infty$ $n$-manifold of $n \geq 2$ and 
$N$ is a disjoint union of $m$~circles in $M$. 
If ${\rm rank}\,A = m$, then \\ 
\hsp \ \ $clb_{/{\cal G}}d\,{\rm Diff}(M,N)_0 \leq \widehat{k}$ \ \ and \ \ 
$cld\,{\rm Diff}(M,N)_0 \leq \widehat{k} + cld\,{\cal G}$, \ \ $clbd\,{\rm Diff}(M,N)_0 \leq \widehat{k} + clbd\,{\cal G}$. \\
In addition, if $n \neq 2,4$ and $M$ is connected, then $cld\,{\cal G} \leq clbd\,{\cal G} < \infty$, so  
the group ${\rm Diff}(M,N)_0$ is uniformly weakly simple relative to ${\cal K \cal P}$ and is bounded. 
\ethm

Moreover, if $n = 2i+1$ $(i \geq 1)$, then $cld\,{\cal G} \leq 4$ and $clbd\,{\cal G} \leq 2n+4$. 
When $n = 2i$ $(i \geq 3)$, some upper bounds for $cld\,{\cal G}$ and $clbd\,{\cal G}$ 
are obtained in terms of handle decompositions or triangulations of $M - N$. 

In the case $m = 1$ (i.e., $N$ consists of a circle), $A$ is just a subgroup of $\IZ$ and $A = k\IZ$ for a unique $k \in \IZ_{\geq 0}$.  
Then, ${\rm rank}\,A < 1$ if and only if $k = 0$. 
In \cite{Abe1} it is shown that for a knot $K$ in the 3-sphere ${\Bbb S}^3$, 
${\rm Diff}({\Bbb S}^3, K)_0$ is uniformly perfect if and only if $K$ is a torus knot. 
We can extend this example to the following form. 

\bexp\label{exp_k=1} Suppose $M$ is a closed connected $C^\infty$ $n$-manifold ($n \geq 2)$ and $K$ is a circle in $M$. 
\benum[(1)] 
\item Suppose $M$ admits an $\IS^1$ action $\rho$ such that $K = \IS^1 \cdot p$ for some point $p$ of $K$ and 
the orbit map $\rho_p : \IS^1 \to K$, $\rho_p(z) = z \cdot p$, has degree $\ell$ (up to $\pm$). Then, $\ell \in A$ and $k| \ell$. 
\bit 
\itemI In the case $M$ is a Seifert fibered 3-manifold, 
if $K$ a regular fiber, then $k = 1$ and if $K$ is a $(p, q)$ multiple fiber, then $k|p$. 
\itemII In particular, if $K$ is a torus knot in $\IS^3$, then $k = 1$, 
since $K$ is a regular fiber of a standard Seifert fibering of $\IS^3$ with two singular fibers.  
\eit 

\item $A = \{ 0 \} \subset \IZ$ and $k = 0$ in the following cases : 
\bit 
\item[] $(\dagger)$ (a) $\pi_1 (M)$ has a trivial center and (b) $\pi_1(K) \to \pi_1 (M)$ is injective.  

\item[] $(\ddagger)$ $n = 3$, \ (a) $\pi_1 (M - K)$ has a trivial center and \\
\hsh (b) $\pi_1(D - K) \to \pi_1 (M - K)$ is injective for a tubular neighborhood $D$ of $K$  in $M$. 
\eit 

\bit 
\itemI For example, a non-torus knot $K$ in $S^3$ satisfies  the condition $(\ddagger)$ (\cite[Ch\,3]{BZ}). 
Note that the condition $(\ddagger)$ for $n \geq 4$ reduces to the condition $(\dagger)$ by the general position argument. 
\itemII The following example $(M, K)$ satisfies the condition $(\dagger)$ for $n \geq 4$. 
Suppose $G$ is a finitely presented group such that $Z(G) = 1$ 
and $G$ includes an element $a$ of infinite order (for example, $G = \IZ \ast H$, $H$ is a nontrivial group).  
Since $n \geq 4$, there exists a closed connected $C^\infty$ $n$-manifold $M$ with $\pi_1(M) \cong G$. 
Take a circle $K$ in $M$ which represents the element $a$ in $\pi_1(M)$, so that the inclusion $i : K \subset M$ induces an isomorphism 
$i_\ast : \pi_1(K) \cong \langle a \rangle < \pi_1(M)$. 
\eit 
\eenum 

The assertion (1) is shown as follows. The $S^1$ action $\rho$ induces an isotopy $F \in {\rm Isot}(M, K)_{\id, \id}$ : $F(x,t) = e^{2\pi it} \cdot x$. 
It follows that $A \ni \nu(F) = {\rm deg}\,F_p = {\rm deg}\,\rho_p = \ell$. 

The assertion (2) is verified in Section 7, together with further examples. 
\eexp 

This paper is organized as follows. Section 2 provides algebraic background necessary in this paper.
Section 3 includes basic notions and notations related to diffeomorphism groups of manifold pairs. 
In Section 4 we verify Fragmentation Lemma and clarify the relations among the norms $cl$, $clb$, $\zeta$ and $\eta$ 
on the group ${\rm Diff}(M, N)_0$. 
Sections 5 - 7 are concerned with the case where the submanifold $N$ is a union of circles. 
In Section 5 we study general properties of quasimorphisms on ${\rm Diff}(M, N)_0$ induced from the rotation angle on $N$. 
In Section 6 we discuss factorization of diffeomorphisms along $N$ and some upper bound of $clb_{/{\cal G}}$.  
Section 7 includes some examples. 

Though the main results on boundedness are obtained in the $C^\infty$ class, 
there are no constraints on the regularity for the construction of quasimorhisms and the unboundedness results in this paper. 
Therefore, throughout this paper we work in the $C^r$ category and impose additional conditions if necessary. 
This clarifies the necessary conditions in each statement. 
In Introduction we mention the case of closed manifolds. 
However, many results are verified even in the case of open manifolds and compact support. 
In a succeeding paper we study the case where $\dim\,N \geq 2$ (\cite{FY3}, cf.\,\cite{FY2}). 

\section{Conjugation invariant norms and quasimorphisms} 

Study of boundedness of a discrete group $G$ aims quantitative understanding of various conjugation invariant norms on $G$.  
Let $\overline{\IR}_{\geq 0} = [0, \infty]$, $\overline{\IZ}_{\geq 0} = \IZ_{\geq 0} \cup \{ \infty \}$ and  
$[m] := \{ 1, 2, \cdots, m \}$, $[m]_+ := \{ 0 \} \cup [m]$ for $m \in \IZ_{\geq 1}$ (and $[0] = \emptyset$).

\subsection{Conjugation invariant norms --- Weak simplicity} \mbox{} 

First we recall basic facts on conjugation-invariant norms \cite{BIP, Ts1}. 
Suppose $G$ is a group with the unit element $e$. 
A conjugation invariant norm on $G$ is a function $q : G \to [0,\infty]$ which satisfies the following conditions~: \ \ 
for any $g, h\in G$
\bit 
\itemi $q(g)=0$ iff $g=e$ \hsh 
(ii) $q(g^{-1})=q(g)$ \hsh 
(iii) $q(gh)\leq q(g)+ q(h)$ \hsh 
(iv) $q(hgh^{-1})= q(g)$. 
\eit 
For a conjugation invariant norm $q$ on $G$, 
the symbol $q < \infty$ means that $q(g) < \infty$ for any $g \in G$.  
The $q$-diameter of a subset $S$ of $G$ is defined by \ $q\hspace{0.2mm}d\,S := \sup q(S)$. 

\bdefn\label{} 
A group $G$ is said to be bounded if any conjugation invariant norm $q : G \to [0, \infty)$ is bounded 
(or equivalently, any bi-invariant metric on $G$ is bounded).
\edefn 

\bnot\label{not_basic} (Fundamental construction)

Suppose $S$ is a subset of $G$. The symbol $N(S)$ denotes the normal subgroup of $G$ generated by $S$. 
If $S$ is symmetric ($S = S^{-1}$) and conjugation invariant ($gSg^{-1} = S$ for any $g \in G$), then 
$N(S) = S^\infty := \bigcup_{k=0}^\infty S^k$ and 
the conjugation invariant norm \  
$q_{
\mbox{\tiny $(G,S)$}} : G \to \overline{\IZ}_{\geq 0}$ \ is defined by \\[2mm] 
\hspace*{20mm} 
$q_{
\mbox{\tiny $(G,S)$}
}(g) := 
\left\{ \hspace{-1mm}
\bary[c]{l}
\min \{ k \in {\Bbb Z}_{\geq 0} \mid \text{$g = g_1 \cdots g_k$ for some $g_1, \cdots, g_k \in S$}\}
\hsh (g \in N(S)), \\[2mm] 
\infty \hsh (g \in G - N(S)).
\eary \right.$ \\[2mm] 
Here, the empty product $(k=0)$ denotes the unit element $e$ in $G$ and $S^0 = \{ e \}$. 
\enot 

\bexpb The fundamental construction yields the following basic conjugation invariant norms. 
\benum 
\item the commutator length $cl = cl_G$ : \\
$cl := q_{(G, G^c)}$ :  \ \ $G^c := \{ [a,b] \mid a, b \in G \}$, \ $N(G^c) = [G,G] = (G^c)^\infty$ \hsp $\circ$ \ $cl\,G := cl_G G$ 
\item the conjugation generated norm \ $\zeta_g$ \ $(g \in G)$ : \\
$\zeta_g := q_{(G, C_g)}$ : \ 
\btab[t]{l}
$C_g := C(g) \cup C(g^{-1})$, \hsh $C(g)$ : the conjugation class of $g$ \\[2mm]
$N(C_g) = N(g) = (C_g)^\infty$ 
\etab 
\eenum 
\eexp 

The commutator length $cl$ is related to (uniform) perfectness and the conjugation generated norms \ $\zeta_g$ \ $(g \in G)$ \ are 
related to (uniform) simplicity (cf. \cite{Ts3}). 

\bfact\label{fact_cl+zeta} \mbox{}
\benum 
\item Suppose $S$ and $S'$ are symmetric and conjugation invariant subsets of $G$. 
\bit 
\itemI If $S' \subset S$, then $q_{G, S} \leq q_{G, S'}$. \hsh 
(ii) \ If $k \in \IZ_{\geq 1}$ and $S' \subset \bigcup_{i \in [k]_+} S^i$, then  $q_{G, S} \leq k q_{G, S'}$. 
\eit 

\item 
\bit 
\itemI For $H < G$ \hsh $(cl_G)|_H \leq cl_H$ and $cl_Gd H \leq cld\,H \equiv cl_Hd\, H$ 
\itemII (a) \ $G$ is perfect. \LLRA $cl < \infty$. \hsp (b) \ $G$ is uniformly perfect. \LLRAdefn $cl$ is bounded. 
\eit 
\item 
\bit 
\itemI If $g \in G$ and $h \in C_g$,then \ $C_h = C_g$ \ and \ $\zeta_h = \zeta_g$. 
\itemII For $g \in G$, \hsh $N(g) = G$ \LLRA $\zeta_g < \infty$. 
\itemiii (a) \ $G$ is simple. \LLRA $N(g) = G$ for any $g \in G^\times$ \LLRA $\zeta_g < \infty$ for any $g \in G^\times$ 
\item[] (b) \  $G$ is uniformly simple. \LLRAdefn $(\zeta_g)_{g \in G^\times}$ is uniformly bounded.
\eit 

\item $\zeta_g$ is bounded for some $g \in G^\times$ \LRA $G$ is bounded. 
\bit 
\itemI $\zeta_g \leq k$ \ for some $k \in \IZ_{\geq 0}$ \LRA $q \leq k q(g)$ \ for any conjugation invariant norm $q$ on $G$. 
\eit 
\eenum 
\efact 

In \cite{FY1} we introduced the notion of uniform simplicity relative to a normal subgroup. 
In \cite{KKKMMO} they have studied 
simple groups relative to a normal subgroup systematically. 
In this article we will need the notion of (uniform) simplicity relative to a union of normal subgroups 
for the study of the diffeomorphism groups of manifold pairs (cf. Section 4). 
To avoid any ambiguity and simplify terminology we introduce the notion of weak simplicity. 

\bdefn\label{defn_weak_simple} Suppose $G$ is a nontrivial group. 
\benum 
\item $S_G := \{ g \in G \mid N(g) \subsetneqq G \}$. 
\item $G$ is weakly simple. \LLRA $G$ is normally generated by some element of $G$ \LLRA $S_G \subsetneqq G$. \\ 
$G$ is uniformly weakly simple. \LLRA $S_G \subsetneqq G$ \ and \ $(\zeta_g)_{g \in G - S_G}$ is uniformly bounded.

\item $G$ is weakly simple relative to $S \subset G$ \LLRA $G$ is weakly simple \ and \ $S = S_G$. \\
$G$ is uniformly weakly simple relative to $S \subset G$ \LLRA $G$ is uniformly weakly simple \ and \ $S = S_G$. 

\item $G$ is (uniformly) relatively simple.\LLRA $G$ is (uniformly) weakly simple and $S_G \vartriangleleft G$. 
(cf. \cite{FY1, KKKMMO}) 
\eenum 
\edefn 

\bfact\label{fact_w-simple} Suppose $G$ is a nontrivial group. 
\benum 
\item $S_G = \bigcup \{ N(g) \mid g \in S_G \} = \bigcup \{ N \mid N \vartriangleleft G, N \neq G\}$. \hsh Note that $e \in S_G$. 
\item For $S \subset G$ \hsp $S = S_G$ \LLRA $S$ : $(\sharp)_1$ $+$ $(\sharp)_2$ \LLRA $S$ : $(\sharp)_1$ $+$ $(\sharp)_2'$ \\
Here, \ $(\sharp)_1$ \ $N(g) = G$ \ (or $\zeta_g < \infty$) \ $(\forall\, g \in G - S)$, \ \ 
$(\sharp)_2$ \ $N(g) \subsetneqq G$ \ (or $\zeta_g \not < \infty$) \ $(\forall\, g \in S)$. \\
\hsp \ $(\sharp)_2'$ \ $S$ is a union of a family of proper normal subgroups. \\
In the case that $S \vartriangleleft G$, the conditions $(\sharp)_2$ and $(\sharp)_2'$ are trivial. 

\item $G$ is weakly simple. \LLRA $\zeta_g < \infty$ for some $g \in G$. 

\item $G$ is uniformly weakly simple. \LRA $G$ is bounded. \hsp (by Fact~\ref{fact_cl+zeta}\,(3))

\item Suppose $f : G \, \to\hspace{-4.5mm}\to \, H$ is a group epimorphism between nontrivial groups. 
\bit 
\itemI $f(G - S_G) \subset G - S_H$. \hsp Note that $f(N(x)) = N(f(x))$ \ $(x \in G)$.
\itemII $G$ is weakly simple. \!\LRA $H$ is weakly simple. 
\eit 
\eenum 
\efact 

\noindent The assertion (4) justifies the notion of weak simplicity in the study of boundedness of groups. 

Related boundedness conditions are summarized as follows. \\[-1mm] 
\hspp \btab[t]{|c|c|c|c|} \hline
$q$ & $cl$ & $\zeta_g$ $(g \in G^\times)$ \, & $\zeta_g$ $(g \in G - S_G)$ $(S_G \subsetneqq G)$\makebox(0,14){} \\[1mm] \hline 
$q < \infty$ & perfect & simple & weakly simple \makebox(0,14){} \\[1mm] \hline 
$q \leq c < \infty$ & unifromly perfect & uniformly simple & uniformly weakly simple \makebox(0,14){} \\[1mm] \hline 
$\sup q$ & $cld\,G$ & $\sup\, (\zeta_g)_{g \in G^\times}$ & $\sup\, (\zeta_g)_{g \in G - S_G}$\makebox(0,14){} \\[1mm] \hline 
\etab \\[4mm] 
Further examples of conjugation invariant norms on diffeomorphism groups are introduced in Section~3.

\subsection{Conjugation invariant norms modulo a subgroup} \mbox{} 

\bdefnb For a function $q : G \to [0, \infty]$ and a nonempty subset $S$ of $G$ we define a function $q_{/S}$ by \\ 
\hsppp $q_{/S} : G \to [0,\infty]$, \ $q_{/S}(f) = \inf q(fS^{-1})$. 
\edefn 

If ${\rm Im}\,q \subset \overline{\IZ}_{\geq 0}$, then ${\rm Im}\,q_{/S} \subset \overline{\IZ}_{\geq 0}$. 
If $q : G \to [0,\infty]$ is conjugation invariant, then $q(gSg^{-1}) = q(S)$.

\blem\label{lem_q/S} Suppose $q : G \to [0, \infty]$ is a conjugation invariant norm and $G \supset S \neq \emptyset$. 
\benum 
\item[{\rm (1)}] 
\bit 
\itemI {\rm (a)} \ $q_{/S}(f) \leq q(fa)$ \ $(\upfa a \in S^{-1})$ \hsp {\rm (b)} \ $q_{/S}(f) = 0$ for any $f \in S$. 
\itemII $q_{/S}(f) = 0$ iff $f \in S$, when $\inf q(G - \{ e \}) > 0$ $($for example ${\rm Im}\,q \subset \overline{\IZ}_{\geq 0}$$)$. 
\eit 
\item[{\rm (2)}]  If $e \in S$, then $q_{/S} \leq q$ 
\item[{\rm (3)}]  $q_{/S}$ is  conjugation invariant if $S$ is conjugation invariant. 
\item[{\rm (4)}]  $q_{/S}(gf) \leq q(g) + q_{/S}(f)$ \ for any $f,g \in G$. 
\item[{\rm (5)}] $q \leq q_{/S} + qd\,S$ 
\item[{\rm (6)}] If $S < G$, then $q_{/S}$ has the following factorization. 
\hsh \smash{\raisebox{8mm}{$\xymatrix@M+1pt{
G \ar[d]_-{} \ar[r]^-{q_{/S}} & [0,\infty] \\
G/S  \ar[ur]_{\widetilde{q}} & 
}$}} \hsf  
\btab[t]{l}
$\widetilde{q}(fS) = \inf q(fS)$ 
\etab  
\eenum
\elem   

\bpfb 
\benum 
\item[{\rm (1)}] 
\bit 
\itemI (b) \ $q(fS^{-1}) \ni q(ff^{-1}) = q(e) = 0$ \hsh \tf \ $q_{/S}(f) = \inf q(fS^{-1}) = 0$
\itemII If $fS^{-1} \subset G - \{ e \}$, then $q(fS^{-1}) \subset q(G - \{ e \})$ and $q_{/S}(f) = \inf q(fS^{-1}) \geq \inf q(G - \{ e \}) > 0$. 
Hence, if $q_{/S}(f) = 0$, then $e \in fS^{-1}$ and $f \in S$. 
\eit 

\item[{\rm (3)}]  
$q(gfg^{-1}S^{-1}) = q(gfS^{-1}g^{-1}) = q(fS^{-1})$. \\
\tf \ $q_{/S}(gfg^{-1}) = \inf q(gfg^{-1}S^{-1}) = \inf q(fS^{-1}) = q_{/S}(f)$ 

\item[{\rm (4)}]  
For any $a \in S^{-1}$ it follows that $q_{/S}(gf) - q(g) \leq q(gfa) - q(g) \leq q(g) + q(fa) - q(g) = q(fa)$. \\
\tf \ $q_{/S}(gf) - q(g) \leq \inf q(fS^{-1}) = q_{/S}(f)$ 

\item[{\rm (5)}] For any $a \in S$ it follows that \\
\hsp $q(f) = q(fa^{-1}a) \leq q(fa^{-1}) + q(a) \leq g(fa^{-1}) + qd\,S$ \hsh \tf \ $q(f) - qd\,S \leq q(fa^{-1})$. \\
Then, \ $q(f) - qd\,S \leq \inf\,q(fS^{-1}) = q_{/S}(f)$ \ and \ $q(f) \leq q_{/S}(f) + qd\,S$.  
\eenum 
\vspace*{-7mm} 
\epf

\bexp\label{exp_cl/N} If $G$ is a group and $N \vartriangleleft G$, then \\
\hsp (a) \ $cl_{/N}\,x = cl_{G/N}[x]$ \ $(x \in G)$ \ and \ (b) \ $cl_{/N}d\,G = cl_{G/N}\,G/N$. 
\hsp \smash{\raisebox{7mm}{$\xymatrix@M+1pt{
G \ar[d]_-{} \ar[r]^-{cl_{/N}} & [0,\infty] \\
G/N  \ar[ur]_{cl_{G/N}} & 
}$}} 
\eexp
\vskip 2mm 

\subsection{Vector-valued quasimorphisms} \mbox{} 

In our argument we need to treat a family of non-independent quasimorphisms. 
This family is effectively treated as a quasimorphism with values in a real vector space with the max norm. 
This subsection includes some remarks on vector-valued quasimorphisms. 

For $m \in \IZ_{\geq 1}$ the symbol $[m]$ denotes the index set $\{ 1, 2, \cdots, m \}$. 
We equip the $m$-dimensional Euclidean space $\IR^m$ with the norm $\| x \| := \ds \max_{i \in [m]} |x_i|$.  \vspace*{0.5mm} 
As usual the symbol $e_i$ $(i \in [m])$ denotes the standard basis of $\IR^m$.
For a subset $A \subset \IR^m$ let $\| A \| = \{ \| a \| \mid a \in A \}$.  

Suppose $\Gamma$ is a group with a unit element $e$. 
We set $\Gamma^\times := \Gamma - \{ e \}$ and $\Gamma^c := \{ [a,b] \mid a,b \in \Gamma \}$ (the set of commutators in $\Gamma$).

\bdefn A function $\phi : \Gamma \to \IR^m$ is said to be a quasimorphism if \\
\hspace*{20mm} $\ds D_\phi := \sup_{a,b \in \Gamma} \|\phi(ab) - \phi(a) - \phi(b)\| < \infty$. \\[1mm] 
The quantity $D_\phi$ is called the defect of $\phi$. We set $C_\phi := \sup \| \phi(\Gamma^c) \| \in [0,\infty]$.
\edefn

Various notions for the usual real valued quasimorphisms have almost direct extension to multivalued quasimorphisms. 
For example, a quasimorphism $\phi : \Gamma \to \IR^m$ is said to be homogeneous if 
$\phi(a^n) = n\phi(a)$ for any $a \in \Gamma$ and $n \in \IZ$. 

\bfact\label{fact_multi-qm} \mbox{} 
\benum 
\item $\phi = (\phi_i)_{i \in [m]} : \Gamma \to \IR^m$ is a quasimorphism iff $\phi_i : \Gamma \to \IR$ is a quasimorphism for each $i \in [m]$.  
In this case we have $D_\phi = \ds \max_{i \in [m]} D_{\phi_i}$. 

\item Suppose $\psi :  \varLambda \to \varGamma$ is a group epimorphism. 
Then, a map $\phi : \Gamma \to \IR^m$ is a quasimorphism iff $\phi \psi$ is a quasimorphism. 
In this case we have $D_{\phi} = D_{\phi \psi}$. 

\item If $\phi : \Gamma \to \IR^m$ is a quasimorphism and $\psi : \IR^m \to \IR$ is an $\IR$-linear map, 
then $\psi \phi : \Gamma \to \IR$ is a quasimorphism. 
\eenum 
\efact 

\bfact\label{fact_ubdd_qm} \mbox{} (cf.\,\cite{Fuj, GG}, \cite{FY1}) \ 
If $\Gamma$ admits a unbounded real valued quasimorphism, then $\Gamma$ is neither bounded nor uniformly perfect.
\efact

\subsection{A lattice-valued map induced from a quasimorphism}\label{subsec_lattice} \mbox{} 

In this subsection we assume that a group $G$ is included in the diagram in Setting~\ref{setting_diagram} and 
under this setting we study the boundedness of $G$. 
In the next section we apply this result to diffeomorphism groups of manifold pairs $(M, N)$ with $\dim N = 1$. 

\begin{setting}\label{setting_diagram} \mbox{} Consider the following diagram and the condition $(\ast)$ : \\[2mm] 
\hspace*{15mm} 
\raisebox{0mm}{
$\xymatrix@M+1pt{
K \ar@{->>}[d]_-{\phi|_K} \ar@{}[r]|*{\subset} & I\ar@{->>}[d]^-{\phi} \ar@{->>}[r]^-f & G \\
A \ar@{}[r]|*{\subset} & \ \IR^m & 
}$} \hsh 
\btab[t]{l} 
$I$ is a group, \ $f$ is a group epimorphism, \\[2mm] 
$K := {\rm ker}\,f$, \ $\phi : I \to \IR^m$ is a surjective quasimorphism, \\[2mm]
$A := \phi(K)$ 
\etab \\[2.5mm] 
\hspace*{9mm} $(\ast_1)$ \ $\phi(uv) = \phi(u) + \phi(v)$ for any $u \in K$ and $v \in I$ \ \ and \ \ $(\ast_2)$ \ $A \subset \IZ^m \subset \IR^m$. 
\end{setting}

\bfactb\label{fact_setting}\mbox{} Under the condition $(\ast)$ we have the following conclusion. 
\benum 
\item By $(\ast)_1$ $\phi|_K : K \to \IR^m$ is a group homomorphism. Hence, \\
\hspp $A < \IZ^m$ \ \ and \ \ 
$\phi|_K : K \to\hspace{-4.5mm}\to A$ is a group epimorphism. \\
Let $\ell := {\rm rank}\,A$ and $g : \IR^m \, \to\hspace{-4.5mm}\to \, \IR^m/A$ denote the quotient homomorphism. 
\item There exists a unique surjective map \\
\hsh $\widehat{\phi} : G \, \to\hspace{-4.5mm}\to \,\IR^m/A$ with $\widehat{\phi} f = g \phi$. 
\hspace*{10mm} 
\smash{\raisebox{5mm}{
$\xymatrix@M+1pt{
& K \ar@{->>}[d]_-{\phi|_K} \ar@{}[r]|*{\subset} & I\ar@{->>}[d]^-{\phi} \ar@{->>}[r]^-f & G \ar@{->>}[d]^{\widehat{\phi}} \\
\IZ^m \ar@{}[r]|*{>} & A \ar@{}[r]|*{\subset} & \ \IR^m \ar@{->>}[r]^-g & \IR^m/A 
}$}} \hsh \raisebox{-2.5mm}{$(\sharp)$} \\[10mm]
In fact, if $w \in G$ and $v \in f^{-1}(w)$, then $f^{-1}(w) = vK = Kv$ and 
by $(\ast_1)$ we have a coset $\phi(f^{-1}(w)) = \phi(vK) = \phi(v) + A \in \IR^m/A$.
Hence, the map $\widehat{\phi}$ is defined by \\
\hspp $\widehat{\phi} : G \,\to\hspace{-4.5mm}\to \, \IR^m/A$ : $\widehat{\phi}(w) = \phi(v) + A$ \ \ $(v \in f^{-1}(w))$. \\
Note that $\widehat{\phi}(e) = A \in \IR^m/A$. 
Since $\phi(f^{-1}(w)) = \widehat{\phi}(w)$, for any $y \in \widehat{\phi}(w)$ there exists $v \in I$ with $f(v) = w$ and $\phi(v) = y$. 
\eenum 
\efact 

\bnot\label{not_lattice} 
The subgroup $A \subset \IZ^m$ is a finitely generated free abelian group of rank $\ell \leq m$. 
Each coset $z \equiv x + A \in \IR^m/A$ is an affine lattice in $\IR^m$. 
Hence, the set $\| z \| \equiv \{ \| y \| \mid y \in z \}$ is a discrete subset of $\IR_{\geq 0}$ and 
we have $\theta_z := \min \| z \| \in \IR_{\geq 0}$.   
Note that $\Theta_z := \{ y \in z \mid \| y \| = \theta_z \}$ (the set of points in $z$ nearest to the origin) is not necessarily a singleton. 
It is seen that $\theta_A = 0$ and $\theta_z > 0$ for any $z \in (\IR^m/A)^\times$. 
We put $\theta_\phi := \sup \{ \theta_z \mid z \in \IR^m/A \}$. 
For any $w \in G$ we have the coset $z = \widehat{\phi}(w) \in \IR^m/A$ and 
we also use the notations $\theta_w := \theta_{\widehat{\phi}(w)}$ and $\Theta_w := \Theta_{\widehat{\phi}(w)}$ for each $w \in G$.  
Note that $\theta_e = \theta_A = 0$ and that $\theta_\phi = \sup \{ \theta_w \mid w \in G \}$.  
\enot 

\bprop\label{prop_lower_bound} Suppose $C_\phi + D_\phi > 0$. 
Then, the following hold. 
\bit
\item[(1)] $cl\,w \geq \sfrac{\theta_w + D_\phi}{C_\phi + D_\phi}$ \ for any $w \in G^\times$. 
\hsp {\rm (2)} \ $cld\,G \geq \sfrac{\theta_\phi + D_\phi}{C_\phi + D_\phi}$ \ if $G \neq \{ e \}$. 
\eit 
\eprop 

\bpfb 
\benum 
\item If $w \in G - [G,G]$, then $cl\,w = \infty$ and the assertion is trivial. 
Now we assume that $w \in [G,G] - \{ e \}$ and put $n := cl\,w \in \IZ_{\geq 1}$. We can find $w_i \in G^c$ $(i \in [n])$ with $w = w_1 \cdots w_n$. 
Since $f(I^c) = G^c$, there exist $v_i \in I^c$ $(i \in [n])$ with $f(v_i) = w_i$. 
Let $v := v_1 \cdots v_n \in I$. Then, we have $f(v) = w$ and $\widehat{\phi}(w) = \phi(v) + A$. 
It follows that \\[2mm] 
\hsp $\|\phi(v)\| 
\leq \Big\|\phi(v) - \ssum{j=1}{n}\, \phi(v_j)\Big\| + \ssum{i=1}{n}\, \|\phi(v_j)\|  \leq (n-1)D_{\phi} + C_\phi n = (C_\phi + D_\phi)n - D_\phi$. \\[2mm] 
This implies that \ $n \geq \sfrac{1}{C_\phi + D_\phi}(\|\phi(v)\| + D_\phi) \geq \sfrac{\theta_w + D_\phi}{C_\phi + D_\phi}$ \ as required. 
\vskip 2mm 
\item For any $w \in G^\times$ it follows that $cld\,G \geq cl\,w \geq \sfrac{\theta_w + D_\phi}{C_\phi + D_\phi}$ and 
$(C_\phi + D_\phi)cld\,G - D_\phi \geq \theta_w \geq 0$. \\[1.5mm] 
Note that, since $G^\times \neq \emptyset$, we have $(C_\phi + D_\phi)cld\,G - D_\phi \geq 0$ and $(C_\phi + D_\phi)cld\,G - D_\phi \geq \theta_e = 0$. \\[1.5mm]
Hence, $(C_\phi + D_\phi)cld\,G - D_\phi \geq \theta_\phi$ and  $cld\,G \geq \sfrac{\theta_\phi + D_\phi}{C_\phi + D_\phi}$. 
\eenum 
\vspace*{-7mm} 
\epf 
\vskip 1mm 

The next lemma will be used in the subsequent section to obtain some upper bounds for the norms $cl$ and $clb$. 

\blem\label{lem_upper_bound} 
Suppose a function $\eta : G \to \IR_{\geq 0}$ satisfies the condition $(\flat)$ for a weakly increasing function $\sigma : \IZ_{\geq 1} \to \IZ_{\geq 1}$ $:$ 
\hsh $(\flat)$ \ If $v \in I$ and $\| \phi(v) \| < j \in \IZ_{\geq 1}$, then $\eta(f(v)) \leq \sigma(j)$. \\
Then, the following hold. 
\benum 
\item[{\rm (1)}] $\eta(w) \leq \sigma(\lfloor\theta_w\rfloor + 1)$ \ for any $w \in G$. \hsp {\rm (2)} \ $\sup \eta \leq \sigma(\lfloor \theta_\phi \rfloor + 1)$. 
\eenum 
\elem 

\bpfb
\benum
\item Take an $s \in \Theta_w$. Since $s \in \widehat{\phi}(w) = \phi(f^{-1}(w))$, 
we can find $v \in f^{-1}(w)$ with $\phi(v) = s$ (cf. Fact~\ref{fact_setting}\,(2)). 
Since $\| \phi(v) \| = \| s \| = \theta_w < j := \lfloor\theta_w\rfloor + 1 \in \IZ_{\geq 1}$, 
from the assumption $(\flat)$ it follows that \ \ $\eta(w) = \eta(f(v)) \leq \sigma(j) = \sigma(\lfloor\theta_w\rfloor + 1)$. 
\item $\eta(w) \leq \sigma(\lfloor\theta_w\rfloor + 1) \leq \sigma(\lfloor\theta_\phi \rfloor + 1)$ \ for any $w \in G$. 
Hence, $\sup \eta \leq \sigma(\lfloor \theta_\phi \rfloor + 1)$. 
\eenum
\vspace*{-7mm} 
\epf 
\vskip 1mm

Below we see that the boundedness of $G$ relies on the conditions [1] $\ell < m$ and [2] $\ell = m$. 

\subsubsection{\bf The case $\ell < m$} \mbox{}  

\bprop\label{prop_rank<m} If $\ell < m$, then there exists a group epimorphism $\chi : \IR^m/A \,\to\hspace{-4.5mm}\to \, \IR$ such that 
$\chi \widehat{\phi} : G \,\to\hspace{-4.5mm}\to \, \IR$ is a surjective quasimorphism. 
Hence, the group $G$ is unbounded and not uniformly perfect. 
\eprop 

\bpfb
Consider the $\IR$-subspace $\IR\langle A \rangle$ of $\IR^m$ generated by $A$. 
Then $\dim \IR\langle A \rangle = \ell < m$ and we have a surjective $\IR$-linear map $\eta : \IR^m/\IR\langle A \rangle \,\to\hspace{-4.5mm}\to \, \IR$. 
Consider the composition $\chi := \eta \rho : \IR^m/A \,\to\hspace{-4.5mm}\to \, \IR$ of $\eta$ and 
the natural group homomorphism $\rho : \IR^m/A \,\to\hspace{-4.5mm}\to \, \IR^m/\IR\langle A \rangle$.  
Then, by Fact~\ref{fact_multi-qm}\,(3) $\chi \widehat{\phi} f = \chi g \phi = (\eta \rho g) \phi$ is a quasimorphism,  
since $\phi$ is quasimorphism and $\eta$ and $\rho g$ are $\IR$-linear maps. 
Since $f$ is group epimorphism, $\chi \widehat{\phi}$ is also a quasimorphism by Fact~\ref{fact_multi-qm}\,(2). 
\epf 

\begin{compl} Suppose $\chi : \IR^m/A \,\to\hspace{-4.5mm}\to \, \IR$ is a group epimorphism such that 
$\chi \widehat{\phi} : G \,\to\hspace{-4.5mm}\to \, \IR$ is a surjective quasimorphism and $H < G$. 
\benum
\item If $\widehat{\phi}|_H : H \,\to \hspace*{-4.5mm} \to \,\IR^m/A$ is surjective, then 
$\chi \widehat{\phi}|_H : H \,\to \hspace*{-4.5mm} \to \, \IR$ is a surjective quasimorphism. 
\item If $J < I$, $H = f(J)$ and $\phi|_J : J \,\to \hspace*{-4.5mm} \to \, \IR^m$ is surjective, 
then $\widehat{\phi}|_H : H \,\to \hspace*{-4.5mm} \to \,\IR^m/A$ is surjective. 
\eenum 
\end{compl}

\brem\label{rem_infty} $\{ \theta_z \mid z \in \IR^m/A \} = [0, \infty)$ and $\theta_\phi = \infty$ in the case $\ell < m$. 
\erem 

\subsubsection{\bf The case $\ell = m$} \mbox{} 

The quotient group $\IZ^m/A$ is a finite abelian group. 
Hence, for each $i \in [m]$ the coset $[e_i] \in \IZ^m/A$ has a finite order $k_i := {\rm ord}\,[e_i] \in \IZ_{\geq 1}$. 
We put $\ds k := \max_{i \in [m]} \,k_i$ and 
$J_A := \ds \sprod{i \in [m]}{} (-k_i/2, k_i/2] \, \subset \IR^m$.  
\vspace*{-2mm} 
\blem\label{lem_k} \mbox{}
\benum
\item[{\rm (1)}]  
Since $k_ie_i \in A$ $(i \in [m])$, for any coset $z \in \IR^m/A$ we have $z \cap J_A \neq \emptyset$, that is, 
there exists $y \in J_A$ with $z = [y]$. 
Here, $y$ is not necessarily unique for $z$.

\item[{\rm (2)}] For any $w \in G$ there exists $y \in \widehat{\phi}(w) \cap J_A$ and we have $\theta_w \leq \| y \| \leq \sfrac{1}{\,2\,}k$ and 
$\theta_\phi \leq \sfrac{1}{\,2\,}k$. 
\vskip 1mm 
\item[{\rm (3)}] 
Suppose a function $\eta : G \to \IR_{\geq 0}$ satisfies the condition $(\flat)$ for a weakly increasing function $\sigma : \IZ_{\geq 1} \to \IZ_{\geq 1}$ $:$ 
\hsh $(\flat)$ \ If $v \in I$ and $\| \phi(v) \| < j \in \IZ_{\geq 1}$, then $\eta(f(v)) \leq \sigma(j)$. \\
Then, $\sup \eta \leq \sigma(\lfloor \theta_\phi \rfloor + 1) \leq \sigma(\lfloor k/2 \rfloor + 1)$. 
\eenum 
\elem

\subsubsection{\bf The case $m = 1$} \mbox{} 

\bfact\label{fact_m=1} In the case $m = 1$, \\
\hsp \,(i)\, there exists a unique $k \in \IZ_{\geq 0}$ with \ $\IZ > A = k\IZ$ \ and \\
\hsp (ii)\, $\ell \equiv {\rm rank}\,A < m$ \ iff \ $k = 0$ \ \ (or $\ell = m$ iff $k \geq 1$). 
\hspace*{15mm}
\smash{\raisebox{15mm}{
$\xymatrix@M+1pt{
K \ar@{->>}[d]_-{\phi|_K} \ar@{}[r]|*{\subset} & I\ar@{->>}[d]^-{\phi} \ar@{->>}[r]^-f & G \ar@{->>}[d]^{\widehat{\phi}} \\
k\IZ \ar@{}[r]|*{\subset} & \IR \ar@{->>}[r]^-g & \IR/k\IZ
}$}} 
\benum
\item[{[1]}] the case $k = 0$ : The map $\widehat{\phi} : G \, \to\hspace{-4.5mm}\to \ \IR/0\IZ \cong \IR$ is a surjective quasimorphism. 
Hence, $G$ is unbounded and not uniformly perfect. 
\item[{[2]}] the case $k \geq 1$ : 
\bit 
\item[(1)] 
\bit 
\itemI $k = {\rm ord}\,[e_1]$ in $\IZ/k\IZ$, \ $J_A = (-k/2, k/2]$, 
\itemII for any $w \in G$ the set $\widehat{\phi}(w) \cap J_A$ is a singleton and $\widehat{\phi}(w) \cap J_A \subset \Theta_w$,  
\itemiii $\{ \theta_w \mid w \in G \} = [0,k/2]$ and $\theta_\phi = k/2$. 
\eit 
\vskip 1.5mm 

\item[(2)] $cld\,G \geq \sfrac{k/2 + D_\phi}{C_\phi + D_\phi}$ if $C_\phi + D_\phi > 0$ \ (Proposition~\ref{prop_lower_bound}).
\vskip 1mm 

\item[(3)]
Suppose a function $\eta : G \to \IR_{\geq 0}$ satisfies the condition $(\flat)$ for a weakly increasing function $\sigma : \IZ_{\geq 1} \to \IZ_{\geq 1}$ $:$ 
\hsh $(\flat)$ \ If $v \in I$ and $\| \phi(v) \| < j \in \IZ_{\geq 1}$, then $\eta(f(v)) \leq \sigma(j)$. \\
Then, $\sup \eta \leq \sigma(\lfloor k/2 \rfloor + 1)$. 
\eit 
\eenum 
\efact 

\subsubsection{\bf Restriction and Reduction of Diagarm $(\sharp)$ in Fact~\ref{fact_setting}} \mbox{} 

Recall the diagram $(\sharp)$ in Fact~\ref{fact_setting}. 

\bnot\label{not_J} In the diagram $(\sharp)$, suppose $J < I$. Then, 
we have the subgroups $N := f(J) < G$ and $L := K \cap J < K$. 
These yields the exact sequence 
\hsh $\xymatrix@M+3pt
{\relax
1 \ar[r] & L \ar@{}[r]|*{\subset} & J \ar[r]^{f|} & N \ar[r] & 1.
}$ 
\enot 

\bfact\label{fact_restriction} \mbox{} In the diagram $(\sharp)$, suppose $J \vartriangleleft I$ and $\phi(J) = \IR^m$. 
\bit 
\itemI We have the subgroups $H := f(J) < G$, $L := K \cap J < K$ and the exact sequence \\
\hspp $\xymatrix@M+3pt 
{\relax
1 \ar[r] & L \ar@{}[r]|*{\subset} & J \ar@{->>}[r]^{f|} & N \ar[r] & 1.
}$ 

\itemII The diagram $(\sharp)$ has the following diagram $(\sharp)_0$, which satisfies the conditions in Setting~\ref{setting_diagram}. \\[1mm] 
\hsp \hsh $(\sharp)_0$ \hsh $\xymatrix@M+1pt{
1 \ar[r] & L \ar@{->>}[d]_-{\phi|_L\,} \ar@{}[r]|*{\subset} 
& J \ar@{->>}[d]^-{\,\phi|_J} \ar@{->>}[r]^-{f|} & H \ar@{->>}[d]^{\, \widehat{\phi|_J}} \ar[r] & 1 \\ 
1 \ar[r] & B \ar@{}[r]|*{\subset} & \ \IR^m \ar@{->>}[r]^-h & \IR^m/B \ar[r] & 1
}$ \hsp 
\btab[t]{l} 
$B := \phi(L) < A < \IZ^m$ \\[2mm] 
$D_{\phi|_J} \leq D_\phi$ 
\etab 
\eit 
\efact 

\bfact\label{fact_reduction} \mbox{}
\benum 
\item In the diagram $(\sharp)$, suppose $J \vartriangleleft I$ and $\phi(uv) = \phi(u)$ for any $u \in I$ and $v \in J$. 
\bit 
\itemI We have the normal subgroups $N := f(J) \vartriangleleft G$, $L := K \cap J \vartriangleleft K$ and the exact sequence \\
\hspp $\xymatrix@M+3pt
{\relax 
1 \ar[r] & K/L \ar@{^{(}->}[r] & I/J \ar@{->>}[r]^{\widetilde{f}} & G/N \ar[r] & 1.
}$
\itemII The diagram $(\sharp)$ has the reduction $(\sharp)_1$ below, which satisfies the conditions in Setting~\ref{setting_diagram}.
Note that $D_{\widetilde{\phi}} = D_{\phi}$, $C_{\widetilde{\phi}} = \sup\,\| \widetilde{\phi}\big(\big(I/J\big)^c\big)\| = \sup\,\| \phi(I^c) \| = C_\phi$, 
$\theta_{[w]} = \theta_w$ $(w \in G)$ and $\theta_{\widetilde{\phi}} = \theta_\phi$.
\eit 
\item 
In the diagram $(\sharp)$, if $L \vartriangleleft K$, $L \vartriangleleft I$ and $L < {\rm Ker}\,\phi|_K$, then 
$J := L$ satisfies the condition in (1) and induces the reduction $(\sharp)_2$ below. 
In fact, for any $u \in I$ and $v \in L$ it follows that $uv = v'u$ for some $v' \in L$ and $\phi(uv) = \phi(v'u) = \phi(v') + \phi(u) = \phi(u)$. 
\eenum 
\hsh  
\btab[t]{ll}
$(\sharp)_1$ & $(\sharp)_2$ \\[1mm] 
{\small $\xymatrix@M+3pt
{\relax
1 \ar[r] & K \ar@{}[r]|*{\subset} \ar@{->>}[d] \ar@/_5ex/@{->>}[dd]_-{\raisebox{15mm}{\footnotesize $\phi|_K$}}
& I \ar@{->>}[r]^f \ar@{->>}[d] \ar@/_4ex/@{->>}[dd]_-{\raisebox{15mm}{\footnotesize $\phi$}} 
& G \ar@{->>}[d] \ar[r] \ar@/_5ex/@{->>}[dd]_-{\raisebox{15mm}{\footnotesize $\widehat{\phi}$}} & 1 \\ 
1 \ar[r] & K/L \ar@{^{(}->}[r] \ar@{->>}[d]^-{\,\widetilde{\phi|_K}}
& I/J \ar@{->>}[r]^{\widetilde{f}} \ar@{->>}[d]^-{\,\widetilde{\phi}} 
& G/N \ar@{->>}[d]^-{\widetilde{\widehat{\phi}}\,=\, \widehat{\widetilde{\phi}}} 
\ar[r] & 1 \\ 
0 \ar[r] & A \ar@{}[r]|*{\subset} & \IR^m \ar@{->>}[r]^-{g} & \IR^m/A \ar[r] & 0 \\
}$} \hsh & 
{\small $\xymatrix@M+3pt
{\relax
1 \ar[r] & K \ar@{}[r]|*{\subset} \ar@{->>}[d] \ar@/_5ex/@{->>}[dd]_-{\raisebox{15mm}{\footnotesize $\phi|_K$}}
& I \ar[r]^f \ar@{->>}[d] \ar@/_4ex/@{->>}[dd]_-{\raisebox{15mm}{\footnotesize $\phi$}} & G \ar@{=}[d] \ar[r] & 1 \\ 
1 \ar[r] & K/L \ar@{}[r]|*{\subset} \ar@{->>}[d]^-{\,\widetilde{\phi|_K} \,=\, \widetilde{\phi}|} 
& I/L \ar[r]^{\widetilde{f}} \ar@{->>}[d]^-{\,\widetilde{\phi}} & G \ar@{->>}[d]^-{\widehat{\phi}} \ar[r] & 1 \\ 
0 \ar[r] & A \ar@{}[r]|*{\subset} & \IR^m \ar[r]^-{g} & \IR^m/A \ar[r] & 0 \\
}$} 
\etab 
\efact

\begin{compl} For a group  $I$ and $K \vartriangleleft I$ the following hold. 
\benum 
\item Consider the subset $S := \{ [x,y] \mid x \in K, y \in I \}$ of $I$ and the subgroup $[K, I] := \langle S \rangle$ of $I$ generated by $S$. 
It follows that 
\bit 
\itemI $S$ is symmetric (i.e., $S^{-1} \subset S$), since  $[x,y]^{-1} = [y,x] = [yxy^{-1}, y^{-1}] \in S$ and
\itemII $S$ is conjugation invariant (i.e., $zSz^{-1} \subset S$ \ $(z \in I)$), since $z[x,y]z^{-1} = [zxz^{-1}, zyz^{-1}]$. 
\eit 
Hence, $[K, I] \vartriangleleft I$ and $[K,I] = S^\infty := \bigcup_{k \geq 0} S^k$.

\item Suppose $L \vartriangleleft K$ and $L \vartriangleleft I$. Then, the short exact sequence \hsh 
$1 \lra K/L \subset I/L \, \to\hspace{-4.5mm}\to \, I/K \to 1$ \\  
is a cantral extension iff $[K,I] \subset L$. 
\eenum 
\end{compl}

\section{Diffeomorphism groups of manifold pairs} 

\subsection{Basic notions} \mbox{} 

For sets $X, Y$, the symbol $F(X,Y)$ denotes the set of maps (or functions) $f : X \to Y$. 
For any set $W$ and any map $\phi : W \to F(X,Y)$ we have the associated map 
$\widetilde{\phi} : X \times W \to Y$ : $\widetilde{\phi}(x,w) = \phi(w)(x)$. 
This induces a canonical bijection $F(W, F(X, Y)) \cong F(X \times W, Y)$, by which we identify $\phi$ and $\widetilde{\phi}$ and sometimes denote $\widetilde{\phi}$ by the same symbol $\phi$. By ${\rm pr}_X : X \times Y \to X$ we denotes the projection onto $X$.

For a topological space $X$ 
the symbols ${\cal O}(X)$, ${\cal F}(X)$ and ${\cal K}(X)$ denote 
the collections of open subsets, closed subsets and compact subsets of $X$ respectively. 
For subsets $A$, $B$ of $X$, 
the symbols ${\rm Int}_XA$, $Cl_XA$ and ${\cal U}(A)$ denote the topological interior, closure and the neighborhood system of $A$ in $X$ and 
the symbol $A \Subset B$ 
means $A \subset {\rm Int}_XB$. For topological spaces $X$ and $Y$ the symbol $C^0(X,Y)$ denotes the set of $C^0$ maps $f : X \to Y$.  
The symbol $I$ always denotes the interval $[0,1]$.  The set of $C^0$ homotopies is denoted by 
${\rm Homot}(X, Y) := C^0(X \times I, Y)$.
Note that ${\rm Homot}(X) := {\rm Homot}(X, X)$ is a monoid with respect to the composition of homotopies : 
$FG := (F_tG_t)_{t \in I}$ for $F= (F_t)_{t \in I}$ and $G = (G_t)_{t \in I} \in {\rm Homot}(X)$. 
For any $f \in C^0(X,Y)$, the constant homotopy $(f)_{t \in I} \in {\rm Homot}^r(X, Y)$ is denoted by the same symbol $f$. 

In this paper a $C^r$ $n$-manifold $(r \in \oIZ_{\geq 0})$ means a separable metrizable $C^r$ manifold possibly with boundary of dimension $n$. 
A closed/open manifold means a compact/noncompact manifold without boundary.  
For $C^r$ manifolds $M$ and $N$ the symbol $C^r(M, N)$ denotes the set of $C^r$ maps $f : M \to N$. 
The set of $C^r$ homotopies is denoted by ${\rm Homot}^r(M, N) := C^r(M \times I, N)$ and ${\rm Homot}^r(M) = {\rm Homot}^r(M, M)$. 

More generally, for sets $X$, $Y$, 
when a notion of $C^r$ maps in $F(X, Y)$ is defined, we use the symbol $C^r(X,Y)$ to denote 
the subset of $C^r$ maps $f : X \to Y$. 
For example, when $M, N, L$ are $C^r$ manifolds, 
from a viewpoint of diffeology, we can define \\
\hsp \hsh $C^r(L, C^r(M, N)) := \{ \phi \in F(L, C^r(M, N)) \mid \widetilde{\phi} : M \times L \to N : C^r \}$. \\
In this article we use this notation in this sense, so that $C^r(L, C^r(M, N)) \cong C^r(M \times L, N)$ canonically.
 
We use the following standard notations for various objects parametrized by $I$ (paths, homotopies etc.)

\bnot\label{not_path} Let $X$ be a set. \\
{[\,I\,]} Let $\alpha, \beta, \gamma \in F(I, X)$ and ${\cal A} \subset F(I, X)$. 
\vspace*{-2mm} 
\benum
\item When $\alpha(1) = \beta(0)$, 
the concatenation $\alpha \ast \beta \in F(I, X)$ is defined by 
$(\alpha \ast \beta)(t) = 
\left\{ \hspace*{-1mm} \bary[c]{ll}
\alpha(2t) & (t \in [0,1/2]) \\[1mm]
\beta(2t-1) & (t \in [1/2,1]).
\eary \right.$
\item The inverse $\gamma^- \in F(I, X)$ is defined by $\gamma^{-}(t) = \gamma(1-t)$ $(t \in I)$. \\
More generally, for $\sigma \in F(I,I)$ we have the change of parameter $\gamma_\sigma := \gamma \circ \sigma$. 
\item A homotopy $\eta : \alpha \simeq \beta$ in $F(I, X)$ means a map $\eta \in F(I, F(I, X))$ with $\eta(0) = \alpha$ and $\eta(1) = \beta$. 
\bit 
\itemI If $\eta(t)(0) = \eta(0)(0)$ and $\eta(t)(1) = \eta(0)(1)$ $(t \in I)$, then $\eta$ is called a homotopy relative ends and denoted by $\eta : \alpha \simeq_\ast \beta$. 
\itemII If $\eta(t) \in {\cal A}$ $(t \in I)$, then $\eta$ is called a homotopy in ${\cal A}$. 
\eit 
\item For $p,q \in X$ we set \ \ 
${\cal A}_p := \{ \alpha \in {\cal A} \mid \alpha(0) = p \}$ \ and \ ${\cal A}_{p,q} := \{ \alpha \in {\cal A} \mid \alpha(0) = p, \alpha(1) = q \}$. 
\eenum 
{[II]} When subspaces of $C^r$ paths and $C^r$ path homotopies in $X$ : \\
\hsp \hsh $C^r(I, X) \subset F(I, X)$ \ and \ $C^r(I, C^r(I, X)) \subset F(I, F(I,X))$ \ are defined, \\
\hsh \ the following notions related to paths/loops are defined. 
\benum
\item ${\cal P}^r(X) := C^r(I, X)$, \ \ $\Omega^r(X, p) := {\cal P}(X)_{p,p}$, \ \ 
$\Omega^r_0(X, p) := \{ \alpha \in \Omega^r(X, p) \mid \alpha \simeq_\ast \ast \ \mbox{(in $C^r$)} \}$ 
\item (the universal covering) \\
$X_p := \{ \alpha(1) \mid \alpha \in {\cal P}^r(X)_p \} \subset X$, \ \ 
$\widetilde{X}_p := {\cal P}^r(X)_p\big/\simeq_\ast$, \ \ $\pi : \widetilde{X}_p \to X_p$ : $\pi([\alpha]) = \alpha(1)$ 
\item (the fundamental group) \\
$\pi_1(X,p) := \Omega^r(X, p)\big/\simeq_\ast$, \ \ $\pi_1(X,p) \car \widetilde{X}_p$ : $[\alpha] \cdot [\gamma] = [\alpha \ast \gamma]$ 
\eenum 
\enot 

In particular, these general notions are applied to the sets $X = C^r(M, N)$ and \\
\hsp \hsh ${\rm Homot}^r(M, N) = C^r(M \times I, N) \cong C^r(I, C^r(M, N))$ \ for $C^r$ manifolds $M$, $N$ and $r \in \oIZ_{\geq 0}$. \\
Any $C^r$ homotopy $F = (F_t)_{t \in I} : M \times I \to N$ is identified with 
the associated $C^r$ map $\widehat{F} : I \to C^r(M, N) : t \mapsto F_t$. 
Under this correspondence, the above notations for $\widehat{F}$ coincide with the usual notations for $F$. 
(We need a change of parameter for the $C^r$ concatenation $\alpha \ast \beta$ $(r \geq 1)$.) 
The same remark also applied to $C^r$ homotopies between $C^r$ homotopies (relative to ends) \\
\hsp \hsh $\Phi \in C^r(M \times I^2, N) \cong C^r(I, C^r(M \times I, N))$, \ $\Phi : F \simeq G$ \ ($\Phi : F \simeq_\ast G$). \\
The set ${\rm Homot}^r(M)$ is a monoid with respect to the composition of homotopies and 
${\rm Homot}^r(M)_{\id, \id}$ is a submonoid of ${\rm Homot}^r(M)$.

A $C^r$ isotopy $F$ of $M$ is a $C^r$ diffeomorphism $F : M \times I \to M \times I$ of the form 
$F(x,t) = (F_t(x), t)$ $((x,t) \in M \times I)$. Sometimes we also denote $pr_M F : M \times I \to M$ by the same symbol $F$. 
The symbols ${\rm Diff}^r(M)$ and ${\rm Isot}^r(M)$ denote the groups of $C^r$ diffeomorphisms and $C^r$ isotopies of $M$ respectively. 
Under the group epimorphism \ $R : {\rm Isot}^r(M) \,\lra\hspace{-5.5mm}\lra \, {\rm Diff}^r(M)$ : $R(F) = F_1$, 
any subset ${\cal I}$ of ${\rm Isot}^r(M)$ induces the associated subset ${\cal G}_{\cal I} := R({\cal I})$ of ${\rm Diff}^r(M)$. 
When $\id_{M \times I} \in {\cal I}$, we have $\id_M \in {\cal G}_{\cal I}$ and the identity components of ${\cal I}$ and ${\cal G}_{\cal I}$ are defined by 
\ \ ${\cal I}_0 := \{ F \in {\cal I} \mid F_0 = \id_M \}$ \ \ and \ \ $({\cal G}_{\cal I})_0 := R({\cal I}_0)$. 
If ${\cal I} < {\rm Isot}^r(M)$, then ${\cal I}_0 < {\rm Isot}^r(M)$ and ${\cal G}_{\cal I}, ({\cal G}_{\cal I})_0 < {\rm Diff}^r(M)$. 

The support of $F \in {\rm Isot}^r(M)$ is defined by \\
\hsppp ${\rm supp}\,F := cl_M\big( \cup_{t \in I} {\rm supp}\,F_t \big)
= pr_M ({\rm supp}\,[F : M \times I \to M \times I])$. 

\bfact\label{fact_supp} \mbox{} Suppose $f, g \in {\rm Diff}^r(M)$, $F, G \in {\rm Isot}^r(M)$ and $U \in {\cal O}(M)$. 
\benum
\item $\supp\,fg \,\subset\, \supp\,f \cup \supp\,g$, \ \ $\supp\,f^{-1} = \supp\,f$, \ \ $\supp\,gfg^{-1} = g(\supp\,f)$ \\
$f = \id$ on $U$ \LRA $\supp\,f \subset M - U$  
\item $\supp\,FG \,\subset\, \supp\,F \cup \supp\,G$, \ \ $\supp\,F^{-1} = \supp\,F$, \ \ $\supp\,gFg^{-1} = g(\supp\,F)$ \\ 
$F = \id$ on $U \times I$ \LRA $\supp\,F \subset M - U$ 
\eenum 
\efact

\subsection{Groups of diffeomorphisms and isotopies of manifold pairs} \mbox{} 

For $r \in \oIZ_{\geq 0}$ a $C^r$ manifold pair means a pair $(M, N)$ of a $C^r$ manifold $M$ and a proper $C^r$ submanifold $N$ of $M$ (cf. Hirsch \cite{Hir}). In the case $r = 0$, we assume that a $C^0$ submanifold $N$ is locally flat in $M$. 
For a $C^r$ manifold pair $(M, N)$, a $C^r$ diffeomorphism of $(M, N)$ means a $C^r$ diffeomorphism $f$ of $M$ onto itself with $f(N) = N$, and 
a $C^r$ isotopy $F$ of $(M,N)$ means a $C^r$ isotopy $F$ of $M$ with $F_t(N) = N$ $(t \in I)$. 

By the symbols ${\rm Diff}^r(M, N)$ and ${\rm Isot}^r(M, N)$ we denote the groups of $C^r$ diffeomorphisms and $C^r$ isotopies of $(M, N)$ respectively. 
They are joined by the group epimorphism \\
\hspp $R : {\rm Isot}^r(M, N) \,\lra\hspace{-5.5mm}\lra \, {\rm Diff}^r(M, N)$ : $R(F) = F_1$. \\
The identity components of these groups are defined by \\
\hsp ${\rm Isot}^r(M, N)_0 := \{ F \in {\rm Isot}^r(M, N) \mid F_0 = \id_M \}$ \ \ and \ \ ${\rm Diff}^r(M, N)_0 = R\big({\rm Isot}^r(M, N)_0\big)$. \\
More generally, for any subset ${\cal I}$ of ${\rm Isot}^r(M, N)$ and the associated subset ${\cal G}_{\cal I} := R({\cal I})$ of ${\rm Diff}^r(M, N)$, 
, the identity components of ${\cal I}$ and ${\cal G}_{\cal I} := R({\cal I})$ are defined by \\
\hspace*{5mm} ${\cal I}_0 := \{ F \in {\cal I} \mid F_0 = \id_M \} = {\cal I} \cap {\rm Isot}^r(M, N)_0$ \ \ and \ \ $R({\cal I})_0 := R({\cal I}_0)$. 

In the subsequent sections we need to impose appropriate conditions ${\cal C}$ for $f \in {\rm Diff}^r(M, N)$ and 
${\cal C}_I$ for $F \in {\rm Isot}^r(M, N)$. We use the following notations to denote the corresponding subsets : \\ 
\hsf \hsf  
\btab[t]{l}
${\rm Isot}^r(M, N; {\cal C}) := \{ F \in {\rm Isot}^r(M, N) \mid F_t : {\cal C} \ (t \in I) \}$, \hsf   
${\rm Isot}^r(M, N; {\cal C}_I) := \{ F \in {\rm Isot}^r(M, N) \mid F : {\cal C}_I \}$, \\[2mm]
\hsp ${\rm Diff}^r(M, N; {\cal C}) := \{ f \in {\rm Diff}^r(M, N) \mid f : {\cal C} \}$, \\[2mm] 
\hsp ${\rm Diff}^r(M, N; {\cal C})_0 := R\big( {\rm Isot}^r(M, N; {\cal C})_0\big)$, \hsh 
${\rm Diff}^r(M, N; {\cal C}_I)_0 := R\big({\rm Isot}^r(M, N; {\cal C}_I)_0\big)$. 
\etab 
\vskip 1mm 

\begin{compl}\label{compl_diff_pair} \mbox{}  
\benum[(1)] 
\item In the case [$F$ : ${\cal C}_I$] \LLRA [$F_t : {\cal C} \ (t \in I)$], we have \\
\hsp \hsh ${\rm Isot}^r(M, N; {\cal C}) = {\rm Isto}^r(M, N; {\cal C}_I)$ \ \ 
and \ \ ${\rm Diff}^r(M, N; {\cal C})_0 = {\rm Diff}^r(M, N; {\cal C}_I)_0$. \\ 
For example, for a subset $X$ of $M$, we see that [$F = \id$ on $X \times I$] \LLRA [$F_t = \id$ on $X$ $(t \in I)$]. 
Hence, in this case, we can denote these conditions by the same symbol [rel $X$] and 
define the subgroups ${\rm Isot}^r(M, N; {\rm rel}\,X)$, ${\rm Diff}^r(M, N; {\rm rel}\,X)$ and ${\rm Diff}^r(M, N; {\rm rel}\,X)_0$ without any ambiguity.  
We also define the subgroups associated to the neighborhood system ${\cal U}(X)$ of $X$ in $M$, \\
\hspp ${\rm Isot}^r(M, N; {\rm rel}\,{\cal U}(X)) := \bigcup \{ {\rm Isot}^r(M, N; {\rm rel}\,U) \mid U \in {\cal U}(X) \}$ \ \ and \\
\hspp ${\rm Diff}^r(M, N; {\rm rel}\,{\cal U}(X)) := \bigcup \{ {\rm Diff}^r(M, N; {\rm rel}\,U) \mid U \in {\cal U}(X) \}$. 

\bit 
\itemI The group ${\rm Isot}^r(M, N)$ includes the following normal subgroups : \\
\hsp ${\rm Isot}^r(M; {\rm rel}\,N)$, \ 
${\rm Isot}^r(M; {\rm rel}\,N)_0$, \ 
${\rm Isot}^r(M; {\rm rel}\,{\cal U}(N))$, \ 
${\rm Isot}^r(M; {\rm rel}\,{\cal U}(N))_0$

\itemII The group ${\rm Diff}^r(M, N)$ includes the following normal subgroups : \\
\hsp ${\rm Diff}^r(M; {\rm rel}\,N)$, \ 
${\rm Diff}^r(M; {\rm rel}\,N)_0$, \ 
${\rm Diff}^r(M; {\rm rel}\,{\cal U}(N))$, \ 
${\rm Diff}^r(M; {\rm rel}\,{\cal U}(N))_0$
\eit 

\item 
However, we have to be careful for conditions on support, since \\ 
{}[\,${\rm supp}\,F$ : compact] \ 
$\relsblr{\lra \,}{\not\hspace*{-1.5mm}\lla}$ \ [${\rm supp}\,F_t$ : compact $(t \in I)$] 
\ \ and \ \ [\,${\rm supp}\,F \Subset X$] 
$\relsblr{\lra \,}{\not\hspace*{-1.5mm}\lla}$ \ [${\rm supp}\,F_t \Subset X$ $(t \in I)$] \\
for a subset $X \subset M$. 
For these conditions we use the following notations to denote the corresponding subgroups.
Note that \ $\supp\,FG \subset \supp\,F \cup \supp\,G$ \ and \ $\supp\,F^{-1} = \supp\,F$ \ for $F, G \in {\rm Isot}^r(M)$. \\
\hsh 
\btab[t]{l}
${\rm Isot}^r_c(M, N) := \{ F \in {\rm Isot}^r(M, N) \mid {\rm supp}\,F : \mbox{compact} \}$ \\[2mm]
${\rm Diff}^r_c(M, N) := \{ f \in {\rm Diff}^r(M, N) \mid \supp\,f : \mbox{compact} \}$, \hsf   
${\rm Diff}^r_c(M, N)_0 := R\big({\rm Isot}^r_c(M, N)_0\big)$, \\[2mm]
${\rm Isot}^r(M, N; {\rm supp}\, \Subset X) := \{ F \in {\rm Isot}^r(M, N) \mid {\rm supp}\,F \Subset X \}$, \\[2mm] 
${\rm Diff}^r(M, N; {\rm supp}\, \Subset X) := \{ f \in {\rm Diff}^r(M, N) \mid \supp\,f \Subset X \}$, \\[2mm] 
${\rm Diff}^r(M, N; {\rm supp}\, \Subset X)_0 := R\big({\rm Isot}^r(M, N; {\rm supp}\, \Subset X)_0\big)$. 
\etab 
\eenum 
\end{compl} 

As usual, the symbol $N$ is omitted from these notations when $N = \emptyset$.  

\bnot\label{not_G} The groups ${\rm Isot}^r(M, N)_0$ and ${\rm Diff}^r(M, N)_0$ include the following normal subgroups. 
\bit 
\itemI ${\cal J}_{(c)}(M, N) := {\rm Isot}^r_{(c)}(M; \, {\rm rel}\  {\cal U}(N))_0 
= {\rm Isot}^r_{(c)}(M, N; {\rm supp} \subset M - N)_0 \ \subset {\rm Isot}^r_{(c)}(M, N)_0$ 
\itemII $\bary[t]{@{}l@{ \ }l}
{\cal G}_{(c)}(M, N) & := R({\cal J}_{(c)}(M, N)) = {\rm Diff}^r_{(c)}(M; \, {\rm rel}\  {\cal U}(N))_0 
= {\rm Diff}^r_{(c)}(M, N; {\rm supp} \subset M - N)_0 \\[1.5mm]
& \subset {\rm Diff}^r_{(c)}(M, N)_0
\eary$ 
\eit 
\enot 

The restriction induces  the following group homomorphisms : \\ 
\hsp $P_I : {\rm Isot}^r(M, N)_0 \lra \ {\rm Isot}^r(N)_0$ :  $P_I(F) = F|_{N \times I}$, \hsp ${\rm ker}\,P_I = {\rm Isot}^r(M; {\rm rel}\,N)_0$, \\ 
\hsp $P : {\rm Diff}^r(M, N)_0 \lra {\rm Diff}^r(N)_0$ : $P(f) = f|_N$, \hsp ${\rm ker}\,P= {\rm Diff}^r(M, N)_0 \cap {\rm Diff}^r(M; {\rm rel}\,N)$. \\
Moreover, for each connected component $L$ of $N$, we also have the following group homomorphisms : \\
\hsp $P_{I,L} : {\rm Isot}^r(M, N)_0 \lra \ {\rm Isot}^r(L)_0$ :  $P_{I, L}(F) = F|_{L \times I}$, \hsp ${\rm ker}\,P_{I,L} = {\rm Isot}^r(M, N; {\rm rel}\,L)_0$, \\ 
\hsp $P_L : {\rm Diff}^r(M, N)_0 \lra {\rm Diff}^r(L)_0$ : $P(f) = f|_L$, \hsp ${\rm ker}\,P_L= {\rm Diff}^r(M, N)_0 \cap {\rm Diff}^r(M; {\rm rel}\,L)$. 

\bnot\label{not_M}
For $r \in \oIZ_{\geq 0}$ and $n \geq \ell \geq 0$, let ${\cal M}^r(n, \ell)$  denote the class of pairs $(M, N)$ such that 
$M$ is a $C^r$ $n$-manifold possibly with boundary and $N$ is a (possibly empty) $C^r$ $\ell$-submanifold possibly with boundary of $M$ such that 
$N \in {\cal F}(M)$ and $N$ is proper in $M$ (i.e., $\partial N = N \cap \partial M$). 
Note that, if $(M, N) \in {\cal M}^r(n,n)$, then $M$ is a disjoint union of $N$ and another $C^r$ $n$-manifold possibly with boundary and 
${\rm Diff}^r(M, N)_0 = {\rm Diff}^r(M)_0$. 
\enot 

\subsection{\bf Isotopy extension theorem} \mbox{}

The isotopy extension theorem and its variants are essential to obtain some extension, restriction and factorizaitons of isotopies {\rm (cf. \cite[Ch 8]{Hir}))}. 
For example, when $(M, N) \in {\cal M}^r(n, \ell)$ $(r \in \oIZ_{\geq 2}, n \geq \ell \geq 0)$, Isotopy extension theorem
implies the following restriction maps are group epimorphisms. \\
\hsp $P_{I} : {\rm Isot}^r_c(M, N; \supp \subset {\rm Int}\,M)_0 \lra {\rm Isot}^r_c(N; \supp \subset {\rm Int}\,N)_0$ : $P_{I}(F) = F|_{N \times I}$ \\
\hsp $P : {\rm Diff}^r_c(M, N; \supp \subset {\rm Int}\,M)_0 \lra {\rm Diff}^r_c(N; \supp \subset {\rm Int}\,N)_0$ : $P(f) = f|_N$. 

The next statement is a simple example of restriction of an isotopy. 

\bprop\label{prop_isot_ext}  

Suppose $M$ is a $C^r$ manifold $(r \in \oIZ_{\geq 2})$, 
$F \in {\rm Isot}^r(M)_0$, $K \in {\cal K}(M)$, $K \Subset U \subset M$ and $F(K \times I) \Subset U$. 
Then, there exists $G \in {\rm Isot}^r_c(M; {\rm rel}\,M - U)_0$ with $G = F$ on $K \times I$ and $\supp\,G \subset \supp\,F$. 
\eprop 

\bpfb 
Take a compact neighborhood $V$ of $K$ in $M$ with $F(K \times I) \Subset V \subset U$. 
Let $(X, \partial_t) \in {\cal X}^{r-1}(M \times I)$ denote the velocity vector field of $F$ on $M \times I$. 
Take $\rho \in C^r(M; [0,1])$ with $\rho \equiv 0$ on $M - V$ and $\rho \equiv 1$ on $F(K \times I)$, 
and consider the vector field $(\rho X, \partial_t) \in {\cal X}^{r-1}(M \times I)$.  
Since $V \in {\cal K}(M)$ and $X \equiv 0$ on $(M - \supp\,F) \times I$, 
the vector field $(\rho X, \partial_t)$ induces $G \in {\rm Isot}^r_c(M; {\rm rel}\,M - (V \cap \supp\,F))_0$ 
which satisfies the required conditions.  
\epf 

\section{Fragmentation Lemma and the norms $\eta$, $\zeta$, $clb$ on ${\rm Diff}^r_c(M,N)_0$} 

In this section we discuss relations and finiteness of the fragmentation norm $\eta$, the conjugation-generated norms $\zeta = (\zeta_g)_g$ 
and the commutator length $clb$ with support in balls on the diffeomorphism group ${\rm Diff}^r_c(M,N)_0$ of a manifold pair $(M, N)$. 
The arguments are direct extensions
 of the absolute case, 
except that we are concerned with the weak simplicity, since the group ${\rm Diff}^r_c(M,N)_0$ is not simple. 

\subsection{Commutator length with support in balls}  \mbox{}

Let $\| \ \|_0$ denote the standard Euclidean norm in $\IR^n$. 
The model ball pair $(\IB^n, \IB^\ell) \subset (\IR^n, \IR^\ell)$ is defined by 
$\IB^n := \{ x \in \IR^n \mid \| x \|_0 \leq 1 \}$ and $\IB^\ell := \IB^n \cap \IR^\ell$. 

Suppose $(M, N) \in {\cal M}^r(n, \ell)$ $(r \in \oIZ_{\geq 0}, n > \ell \geq 1)$. 

\bnotb For $n$-balls in $M$ we use the following notations. \\[1mm] 
\hsh ${\cal B}^r(M) :=$ the set of $C^r$ $n$-balls in ${\rm Int}\,M$. Here, a $C^0$ $n$-ball means a locally flat $n$-ball. \\
\hsh ${\cal B}^r(M, N) := {\cal B}^r(M, N)_1 \cup {\cal B}^r(M, N)_2$ : \\
\hsp \hsh ${\cal B}^r(M, N)_1 := \{ D \in {\cal B}^r(M) \mid \mbox{$(D, D \cap N)$ is $C^r$ diffeomorphic to $(\IB^n, \IB^\ell)$}.\}$ \\
\hsp \hsh ${\cal B}^r(M, N)_2 := \{ D \in {\cal B}^r(M) \mid D \subset M - N \}$ \\
\hsh ${\cal F}{\cal B}^r(M, N)$ : the set of finite disjoint unions $D$ of $n$-balls $D_i \in {\cal B}^r(M, N)$ $(i \in [m])$ \\
\hsh ${\cal F}{\cal B}^r(M, N)_1$ : the set of finite disjoint unions $D$ of $n$-balls $D_i \in {\cal B}^r(M, N)_1$ $(i \in [m])$ \\
\hsh ${\cal F}{\cal B}^r(M, N)_1^\ast := \{ D \in {\cal F}{\cal B}^r(M, N)_1 \mid D = \bigcup_{i \in [m]} D_i, \ D_i \in {\cal B}^r(M, N_i)_1\}$, \\
\hspace*{50mm} when $N$ has finitely many connected components $N_i$ $(i \in [m])$. 

Here, we include the subclass ${\cal B}^r(M, N)_2$ to incorporate 
the absolute case where $N = \emptyset$ in Definition~\ref{def_clb} below (cf. Fact~\ref{fact_cl-clb}\,(2)). 
For example, to estimate $clbd\,{\rm Diff}^r_c(M, N)_0$ we need the term $clbd\,{\rm Diff}^r_c(M - N)_0$  as explained in Introduction. 
\enot 

\bdefn\label{def_clb} \mbox{}
\benum 
\item Let ${\cal S}_b \equiv {\cal S}_b(M, N) := \bigcup \{ {\rm Diff}^r(M, N; \supp \Subset D)_0^c \mid D \in {\cal F}{\cal B}^r(M, N) \}$. 
Each element of ${\cal S}_b$ is called a commutator with support in balls in ${\rm Diff}^r(M, N)_0$. 
Note that ${\cal S}_b$ is symmetric and conjugation invariant in ${\rm Diff}^r(M, N)_0$. 

\item The conjugation invariant norm induced from ${\cal S}_b$ is called 
the commutator length with support in balls and denoted by $clb : {\rm Diff}^r(M, N)_0 \to \oIZ_{\geq 0}$. 

\item If ${\cal D} \equiv {\rm Diff}^r(M, N)_0 \vartriangleright {\cal G} \supset {\cal S}_b$, then 
we have $q_{{\cal G}, {\cal S}_b} : {\cal G} \to \oIZ_{\geq 0}$. 
Since $clb$ is just the extension of $q_{{\cal G}, {\cal S}_b}$ by $\infty$, we denote $q_{{\cal G}, {\cal S}_b}$ by the same symbol $clb$. 
For example, this applies to ${\cal G} = {\rm Diff}^r_c(M, N)_0$ or ${\rm Diff}^r_c(M, N; \supp \subset {\rm Int}\,M)_0$. 
\eenum
\edefn 

\bfact\label{fact_cl-clb} Let ${\cal D} \equiv {\rm Diff}^r_c(M, N; \supp \subset {\rm Int}\,M)_0$ and consider the norms $cl, clb : {\cal D} \to \oIZ_{\geq 0}$.   
\benum 
\item We have \ ${\cal D}^c \supset {\cal S}_b$, \ $cl \leq clb$ \ and \ $cld\,{\cal D} \leq clbd\,{\cal D}$. 
\item When $M$ is connected and $N \neq \emptyset$, the following holds. 
\bit 
\itemI Any $D \in {\cal F}{\cal B}^r(M, N)$ admits $E \in {\cal F}{\cal B}^r(M, N)_1$ with $D \Subset E$. 
\itemII ${\cal S}_b = \bigcup \{ {\rm Diff}^r(M, N; \supp \Subset D)_0^c \mid D \in {\cal F}{\cal B}^r(M, N)_1 \}$. 
\eit 
\eenum 
\efact 

Consider the following condition $P(r,n,\ell)$ for $r \in \oIZ_{\geq 0}$ and $n > \ell \geq 0$. 

\begin{assumption_A} $P(r,n,\ell)$ : \ 
The group ${\rm Diff}^r_c(\IR^n, \IR^\ell)_0$ is perfect. 
\end{assumption_A}

\bexp\label{exp_assumption} \mbox{}  
\benum 
\item $P(r,n,\ell)$ holds in the following case : \hsh $r = \infty, \ n > \ell \geq 1$ \hspace*{5mm} (Theorem I, cf.\,\cite{AF1})
\item $P(\infty,n,0)$ $(n \in \IZ_{\geq 1})$ does not hold, since $H_1({\rm Diff}^\infty_c(\IR^n, 0)_0) \cong \IR$ (\cite{Fu1}).  
\eenum 
\eexp 

\begin{compl} In \cite[Theorem 2]{Ry2} it is stated that $P(r,n,\ell)$ holds for $r \in \oIZ_{\geq 1}, \ n > \ell \geq 1$, \ $r \neq n+1$. 
We could not have confirmed this statement yet.

\end{compl}

\bfact\label{fact_ball} \mbox{} 
\benum 
\item Let ${\cal E} := {\rm Diff}^r_c(\IR^n, \IR^\ell)_0$ and consider \ $cl, clb : {\cal E} \to \oIZ_{\geq 0}$ : \\ 
\,(i) \ ${\cal E}^c = {\cal S}_b(\IR^n, \IR^\ell)$, \ $cl = clb$ \ and \ $cld\, {\cal E} = clbd\, {\cal E}$ \hsh (ii) \ $cld\, {\cal E} \leq 2$ \ under Assumption $P(r,n,\ell)$.
\vskip 1mm 

\item Suppose $(M,N) \in {\cal M}^r(n, \ell)$ $(r \in \oIZ_{\geq 0}, n > \ell \geq 1)$ and $D \in {\cal F}{\cal B}^r(M, N)$. 
Let ${\cal D} \equiv {\rm Diff}^r_c(M, N; \supp \subset {\rm Int}\,M)_0$ and ${\cal D}_D := {\rm Diff}^r(M, N; \, \supp \Subset D)_0$. 
Consider $clb : {\cal D} \to \oIZ_{\geq 0}$ and $cl : {\cal D}_D \to \oIZ_{\geq 0}$. 
\bit 
\itemI ${\cal S}_b \supset {\cal D}_D^c$, \ $clb|_{{\cal D}_D} \leq cl_{{\cal D}_D}$ \ and \ $clbd\,{\cal D}_D \leq cld\,{\cal D}_D$. 
\itemII $cld\,{\cal D}_D \leq 2$  \ under Assumption $P(r,n,\ell)$.
\eit 
\eenum 
\efact 

\subsection{Fragmentation norm on ${\rm Diff}^r_c(M,N)_0$} \mbox{} 

Suppose $(M, N) \in {\cal M}^r(n, \ell)$ $(r \in \oIZ_{\geq 0}, n \geq \ell \geq 0)$. 

\bdefn\label{def_frag} A fragmentation of $f \in {\rm Diff}^r_c(M, N; \supp \subset {\rm Int}\,M)_0$ of length $m \in \IZ_{\geq 0}$ 
means a factorization \ $f = f_1 \cdots f_m$ \ such that 
$f_i \in {\rm Diff}^r(M, N; \supp \Subset D_i)_0$ for some $D_i \in {\cal F}{\cal B}^r(M, N)$ $(i \in [m])$. 
\edefn 

\bprop\label{prop_frag} 
Each $f \in {\rm Diff}^r_c(M, N; \supp \subset {\rm Int}\,M)_0$ has a fragmentation. 
\eprop 

\bpf 
We only treat the case that $r \geq 1$. In the $C^0$ case we need to apply Deformation lemma in \cite{EK} carefully. 
For $C \in {\cal K}(M)$ we set ${\cal D}_C := {\rm Diff}^r(M, N; {\rm supp}\, \subset C)_0$. 

Given $f$, there exists $F \in {\rm Isot}^r_c(M, N; \supp \subset {\rm Int}\,M)_{\id, f}$. 
Set $K := \supp\,F \in {\cal K}({\rm Int}\,M)$. 
Then, for any $C^1$ neighborhood ${\cal U}$ of $\id_M$ in ${\cal D}_K$ 
we can find a subdivision $0 = t_0 < t_1 < \cdots < t_k = 1$ of the interval $I$ such that $g_i := F_{t_i}F^{\ -1}_{t_{i-1}} \in {\cal U}$ \ $(i \in [k])$. 
Since $f = g_k \cdots g_2g_1$, it suffices to show the following claim.

\bit 
\item[$(\ast)$] For any $K \in {\cal K}({\rm Int}\,M)$ and any $C^1$ neighborhood ${\cal V}$ of $\id_M$ in ${\cal D}_K$ 
there exists a $C^1$ neighborhood ${\cal U}$ of $\id_M$ in ${\cal D}_K$ such that any $g \in {\cal U}$ admits a fragmentation in ${\cal V}$. 
\eit 

Given $K$, take $K_i, L_i \in {\cal K}(M)$, $D_i \in {\cal B}^r(M, N)$ $(i \in [m])$ such that $K = \bigcup_{i \in [m]} K_i$ and 
$K_i \Subset L_i \Subset D_i$.
For any ${\cal V}$, inductively we can find a sequence \ 
${\cal V} \equiv {\cal V}_m \supset \cdots \supset {\cal V}_1 \supset {\cal V}_0 \equiv {\cal U}$ \ 
of $C^1$ neighborhoods of $\id_M$ in ${\cal D}_K$ which satisfies the following conditions for each $i \in [m]$. 
\bit 
\item[$(\flat)$] Any $h \in {\cal V}_{i-1}$ has a factorization $h = g'h'$ such that \\
(a) $g', h' \in {\cal V}_i$, $g' \in {\cal D}_{K \cap L_i}$, \ (b) $g' = h$, \ $h' = \id$ on $K_i$ \ and \ (c) if $h(x) = x$, then $g'(x) = h'(x) = x$. 
\eit 
In fact, if we take ${\cal V}_{i-1}$ sufficiently small for ${\cal V}_i$ and identify $(D_i, D_i \cap N)$ with $(\IB^n, \IB^\ell)$, 
then $g' \in {\cal D}_{K \cap L_i}$ and $G' \in {\rm Isot}^r(M, N; \supp \subset K \cap L_i)_{0, g'}$ can be defined on $L_i$ by \\
\hsp $g'(x) = (1 - \rho(x))x + \rho(x)h(x)$ \ \ and \ \  $G'(x,t) = (1-t)x + tg'(x) = (1 - t\rho(x))x + t\rho(x)h(x)$, \\
where $\rho \in C^r(M, [0,1])$ with $\rho \equiv 1$ on $K_i$ and $\supp\,\rho \Subset L_i$. 

Then, for any $g = h_0 \in {\cal U}$ there exists $g_i, h_i \in {\cal V}_i \subset {\cal D}_K$ $(i \in [m])$ such that 
\bit 
\itema $h_{i-1} = g_ih_i$, \ $g_i \in {\cal D}_{K \cap L_i}$, \ (b) $h_i = \id$ on $K_i$ \ and \ 
(c) if $h_{i-1}(x) = x$, then $g_i(x) = h_i(x) = x$. 
\eit 
It is seen that $g = g_1 \cdots g_ih_i$ $(i \in [m])$ by (a) and $h_i = \id$ on $\bigcup_{j \in [i]} K_j$ $(i \in [m])$ by (b), (c). 
Hence, it follows that 
$h_m = \id$, \ $g = g_1 \cdots g_m$ \ and \ $g_i \in {\cal V}_i \cap {\cal D}_{K \cap L_i} \subset{\cal V} \cap  {\rm Diff}^r(M, N; \supp \Subset D_i)_0$ $(i \in [m])$. 
\epf 

\bdefn The fragmentation norm on ${\rm Diff}^r_c(M,N)_0$ is defined by \\
\hsh $\eta : {\rm Diff}^r_c(M, N; \supp \subset {\rm Int}\,M)_0 \lra \IZ_{\geq 0}$ : \ 
$\eta(f) := \min \{ k \in \IZ_{\geq 0} \mid \mbox{$f$ has a fragmentation of length $k$.} \}$ 
\edefn 

Proposition~\ref{prop_frag} and Fact~\ref{fact_ball}\,(2) deduce the following conclusion. 

\bprop $clb \leq 2\eta$ on ${\rm Diff}^r_c(M, N; \supp \subset {\rm Int}\,M)_0$ \ under Assumption $P(r,n,\ell)$. 
\eprop 

\subsection{Relation between the norms $\zeta$ and $clb$} \mbox{} 

Next we conform the relation between the norms $\zeta$ and $clb$ on the group ${\rm Diff}^r_c(M, N)_0$ (cf. \cite{BIP, Ts3}). 
Suppose $(M, N) \in {\cal M}^r(n, \ell)$ $(r \in \oIZ_{\geq 0}, n > \ell \geq 1)$. 
We consider the norms $\zeta = (\zeta_g)_g$ and $clb$ on the group ${\rm Diff}^r_c(M, N; \supp \subset {\rm Int}\,M)_0$. 
For notational simplicity, let \\
\hsp ${\cal D} = {\rm Diff}^r_c(M, N; \supp \subset {\rm Int}\,M)_0$ \ \ and \ \ ${\cal D} > {\cal D}_{A} := {\rm Diff}^r_c(M, N; \supp \subset A)_0$ \ for $A \subset {\rm Int}\,M$. \\
Note that $g({\cal D}_A)g^{-1} = {\cal D}_{g(A)}$ \ for any $g \in {\cal D}$. 

\begin{fact}\label{lem_C_g^4}  $($cf.~\cite{BIP}, \cite[Lemma 3.1]{Ts2}, {etc}.$)$ \ \ 
Suppose $g \in {\cal D}$, $A \subset {\rm Int}\,M$ and $g(A) \cap A = \emptyset$. 

\benum 
\item[{\rm (1)}] 
$({\cal D}_{A})^c \subset C_g^4$ \ in ${\cal D}$. 
More precisely, for any $a, b \in {\cal D}_{A}$ the following identity holds :  \\[0.5mm] 
\hspace*{10mm} $[a,b] =g\big(g^{-1}\big)^c g^{bc}\big(g^{-1}\big)^b$ \ in \ ${\cal D}$, \hsh 
where $c := a^{g^{-1}} \in {\cal D}_{g^{-1}(A)}$. Note that $cb = bc$.  
\vskip 0.5mm 
\item[{\rm (2)}] $({\cal D}_{A'})^c \subset C_g^4$ \ in ${\cal D}$, if $A' \subset {\rm Int}\,M$ and 
$h(A') \subset A$ for some $h \in {\cal D}$.  
\eenum 
\end{fact}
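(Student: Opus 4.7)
The plan is to verify the displayed identity by direct algebraic manipulation in the group, then observe that each of the four factors lies in the conjugacy class $C_g$, which immediately yields $[a,b] \in C_g^4$. The auxiliary claim $cb = bc$ is geometric and needs to be checked first: with the convention $x^y = yxy^{-1}$, the element $c = a^{g^{-1}} = g^{-1}ag$ satisfies $\supp\,c = g^{-1}(\supp\,a) \subset g^{-1}(A)$, so $c \in {\cal D}_{g^{-1}(A)}$; since $g(A) \cap A = \emptyset$ implies $A \cap g^{-1}(A) = \emptyset$, the supports of $b \in {\cal D}_A$ and $c \in {\cal D}_{g^{-1}(A)}$ are disjoint and hence $b$ and $c$ commute as diffeomorphisms.

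For the identity itself, I would expand the right-hand side step by step. Writing out $g \cdot (g^{-1})^c \cdot g^{bc} \cdot (g^{-1})^b$ as
\[
g \cdot c g^{-1} c^{-1} \cdot (bc) g (bc)^{-1} \cdot b g^{-1} b^{-1},
\]
the relation $gc = ag$ (equivalent to $c = g^{-1}ag$) gives $g c g^{-1} = a$, so the first two factors collapse to $a c^{-1}$. Using $bc = cb$ one then commutes $c^{-1}$ past $b$ to obtain $a c^{-1} \cdot bc \cdot g c^{-1} b^{-1} \cdot b g^{-1} b^{-1} = ab \cdot g c^{-1} g^{-1} \cdot b^{-1}$. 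A final application of $gc^{-1}g^{-1} = a^{-1}$ (again from $gc = ag$) yields $ab a^{-1} b^{-1} = [a,b]$. Since conjugates of $g$ and of $g^{-1}$ belong to $C_g = C(g) \cup C(g^{-1})$, each of the four factors on the right-hand side lies in $C_g$, so $[a,b] \in C_g^4$, and varying $a, b \in {\cal D}_A$ gives $({\cal D}_A)^c \subset C_g^4$.

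For (2), I would reduce to (1) by conjugation. Given $h \in {\cal D}$ with $h(A') \subset A$, for any $a', b' \in {\cal D}_{A'}$ set $a := hah'h^{-1}$ and $b := hb'h^{-1}$ where I mean $a := ha'h^{-1}$, $b := hb'h^{-1}$; these have supports in $h(A') \subset A$, so $a, b \in {\cal D}_A$. Then $[a',b'] = h^{-1}[a,b]h$, and since $C_g^4$ is conjugation invariant in ${\cal D}$ (as $C_g$ itself is), the conclusion of (1) gives $[a',b'] \in C_g^4$.

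No real obstacle is anticipated; the only care required is with the conjugation convention $x^y = yxy^{-1}$, so that $a^{g^{-1}} = g^{-1}ag$ has support in $g^{-1}(A)$ as asserted, and so that $bc = cb$ is the correct disjoint-support commutation to invoke. The identity is an Epstein-type trick in the style of \cite{BIP, Ts2}, so the proof amounts to the verification above together with citation of that framework.
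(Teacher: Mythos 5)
Your verification of the identity in (1) is correct, and it pins down the right conjugation convention $x^y = yxy^{-1}$, which is exactly what makes $c = a^{g^{-1}}$ have support in $g^{-1}(A)$ and what makes $gcg^{-1} = a$; the paper states the identity without writing out this algebra. For (2), your argument (conjugate $a', b'$ by $h$ into ${\cal D}_A$, apply (1), then pull back using conjugation invariance of $C_g^4$) is a mirror image of the paper's proof, which instead replaces $g$ by $k := h^{-1}gh$, checks $k(A') \cap A' = \emptyset$, and invokes (1) with $C_k = C_g$; the two are equivalent one-line reductions, so the approach is essentially the same.
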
 

\begin{proof} (2) 
Let $k := h^{-1}gh \in {\cal D}$. Then, $k(A') \cap A' = \emptyset$ and by (1) we have $({\cal D}_{A'})^c \subset C_{k}^4 = C_g^4$.  
\end{proof} 

For each $L \in {\cal C}(N)$ we have the group epimorphism $P_L : {\cal D} \lra {\rm Diff}^r_c(L; \supp \subset {\rm Int}\,L)_0$ : $P_L(f) = f|_L$. 
Let ${\cal K \cal P} \equiv {\cal K \cal P}(M, N) := \bigcup_{L \in {\cal C}(N)} {\rm Ker}\,P_L \subset {\cal D}$, 
where we set ${\cal K \cal P}(M, N) := \{ \id_M \}$ when $N = \emptyset$.  
Note that $f \in {\cal D} - {\cal K \cal P}$ iff $f \neq \id_M$ and $f|_L \neq \id_L$ for each $L \in {\cal C}(N)$. 

\begin{prop}\label{prop_zeta_leq_4clb_pi} 
Suppose $M$ is connected and $N$ has finitely many connected components $N_i$ $(i \in [m])$. 
\benum
\item[{\rm (1)}] ${\cal S}_b(M, N) \subset C_g^4$ and $\zeta_g \leq 4 \,clb \leq clbd\,{\cal D}$ in ${\cal D}$ for any $g \in {\cal D} - {\cal K \cal P}$. 
\item[{\rm (2)}] \,{\rm (i)} 
Under Assumption $P(r,n,\ell)$ 
\bit 
\item[] {\rm (a)} \ $\zeta_g \leq 4\,clb \leq 8 \eta < \infty$ in ${\cal D}$ for any $g \in {\cal D} - {\cal K \cal P}$ and \ 
\item[] {\rm (b)} \ ${\cal D}$ is weakly simple relative to ${\cal K \cal P}$. 
\eit 
{\rm (ii)} If $clbd\,{\cal D} < \infty$, then {\rm (a)}   
${\cal D}$ is uniformly weakly simple relative to ${\cal K \cal P}$ 
and {\rm (b)} ${\cal D}$ is bounded. 
\eenum
\end{prop}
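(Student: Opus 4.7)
The plan is to deduce everything from part (1), whose content is that ${\cal S}_b(M,N) \subset C_g^4$ for each $g \in {\cal D} - {\cal K \cal P}$. Granting this, $\zeta_g \leq 4\,clb$ follows by writing $f$ as a product of $clb(f)$ commutators with ball supports, each then lying in $C_g^4$, while $clb \leq clbd\,{\cal D}$ is trivial. To prove (1) I invoke Fact~\ref{lem_C_g^4}(2): for any $D \in {\cal F}{\cal B}^r(M,N)$ it suffices to build $h \in {\cal D}$ and $A \subset {\rm Int}\,M$ with $h(D) \subset A$ and $g(A) \cap A = \emptyset$, for then $({\cal D}_D)^c \subset C_g^4$ and ${\cal S}_b = \bigcup_D ({\cal D}_D)^c$ is covered.

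The construction of $(A, h)$ is Epstein-style. The key preparatory observation is that $g \notin {\cal K \cal P}$ forces $g|_U \neq \id$ for every component $U$ of $M - N$: if $g$ were the identity on some nonempty open $U \subset M - N$, then by continuity it would be identity on $\bar U$, hence on $\bar U \cap N$, which by local flatness of $N$ in $M$ is open-and-closed in $N$ (a nonempty union of components, since $M$ is connected and we may assume $N \neq \emptyset$) --- contradicting $g \notin {\cal K \cal P}$. Consequently the non-fixed locus of $g$ is nonempty open, hence uncountable, in each component of $M - N$ and in each $N_i$. Now decompose $D = \bigsqcup_a D_a$ and classify each $D_a$ as ``free'' ($D_a \subset M - N$) or ``adapted'' to some $N_{j(a)}$. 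Inductively pick distinct non-fixed base points $p_a$, chosen in the component of $M - N$ containing $D_a$ (free case) or in $N_{j(a)}$ (adapted case), so that all $2k$ points $\{p_a\} \cup \{g(p_a)\}$ are distinct --- only finitely many avoidance conditions per step in uncountable sets. Take mutually disjoint neighborhoods $V_a \ni p_a$ (a standard ball pair along $N_{j(a)}$ when $p_a \in N$, a ball in $M - N$ otherwise) small enough that $g(V_a) \cap V_b = \emptyset$ for all $a, b$, by continuity of $g$, and set $A := \bigsqcup_a V_a$. Finally $h \in {\cal D}$ is built componentwise: a free $D_a$ is pushed into $V_a$ by a compactly supported isotopy in the component of $M - N$ containing it, extended by identity across $N$; an adapted $D_a$ is carried onto a standard ball pair inside $V_a$ by an ambient isotopy in ${\rm Isot}^r(M, N)$ coming from homogeneity of standard ball pairs along $N_{j(a)}$ in the interior of $(M, N)$, via the isotopy extension machinery recalled in \S3.3.

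The remaining assertions are formal. For (2)(i)(a), combine (1) with the previously established $clb \leq 2\eta$ (under Assumption $P(r,n,\ell)$) and the finiteness of $\eta$ from Proposition~\ref{prop_frag}. For (2)(i)(b), I invoke Fact~\ref{fact_w-simple}(2) with conditions $(\sharp)_1$ and $(\sharp)_2'$: $\zeta_g < \infty$ on ${\cal D} - {\cal K \cal P}$ supplies $(\sharp)_1$, while ${\cal K \cal P} = \bigcup_L {\rm Ker}\,P_L$ is a union of proper normal subgroups of ${\cal D}$, supplying $(\sharp)_2'$; hence $S_{\cal D} = {\cal K \cal P}$ as in Definition~\ref{defn_weak_simple}(3). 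For (2)(ii)(a), the hypothesis $clbd\,{\cal D} < \infty$ upgrades $\zeta_g \leq 4\,clb$ to the uniform bound $\zeta_g \leq 4\,clbd\,{\cal D}$ over $g \in {\cal D} - {\cal K \cal P}$, which is uniform weak simplicity relative to ${\cal K \cal P}$; (b) is then Fact~\ref{fact_w-simple}(4).

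The hardest step will be the geometric construction of $(A, h)$: one must simultaneously respect $N$, keep the $V_a$ mutually disjoint, and secure $g$-disjointness $g(A) \cap A = \emptyset$. The subtlest piece is the ambient isotopy pushing each adapted $D_a$ into an arbitrarily small standard ball-pair neighborhood at a prescribed point of $N_{j(a)}$ while preserving $N$ globally; this rests on local normal-form triviality of standard ball pairs together with compactly supported isotopy extension, and uses the positive codimension $n - \ell \geq 1$.
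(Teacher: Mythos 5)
Your proposal is correct and follows the same Epstein-style strategy as the paper, built around Fact~\ref{lem_C_g^4}\,(2); the formal deductions in (2)(i)–(ii) via Proposition~\ref{prop_frag} and Fact~\ref{fact_w-simple}\,(2),(4) are exactly the paper's. The only real difference in (1) is organizational: the paper fixes a single target $E = \bigcup_{i\in[m]_+}E_i$ once — one adapted ball pair $E_i\in{\cal B}^r(M,N_i)_1$ per component $N_i$, plus one free ball $E_0\in{\cal B}^r(M,N)_2$, all chosen $g$-disjoint — and then uses connectedness of $M$ to squeeze an arbitrary $D\in{\cal F}{\cal B}^r(M,N)$ into that one $E$. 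You instead build a fresh $g$-disjoint target $A=\bigsqcup_a V_a$ adapted to the components of each given $D$, which forces you to re-run the point-avoidance argument for every $D$ and to establish the auxiliary claim that $g\notin{\cal K\cal P}$ makes $g$ nontrivial on \emph{every} component of $M-N$ (your argument for this, via $\bar U\cap N$ being a nonempty union of components of $N$, is correct). The paper's fixed-$E$ version sidesteps that auxiliary claim: a free $D_a$ in a component $U$ is pushed into the free part $E_i\cap U$ of whichever $E_i$ abuts $U$, so one only needs nontriviality of $g$ on each $N_i$ and one non-fixed point in $M-N$. Both routes are valid; the paper's is a bit leaner.
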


\begin{proof}
(1) Given $g$, since $g \neq \id_M$ and $g|_{N_i} \neq \id_{N_i}$ $(i \in [m])$, 
there exists $E = \bigcup_{i \in [m]_+} E_i \in {\cal F}{\cal B}^r(M, N)$ 
such that $E_i \in {\cal B}^r(M, N_i)_1$ $(i \in [m])$, $E_0 \in {\cal B}^r(M, N)_2$ and $g(E) \cap E = \emptyset$.
Since $M$ is connected, for any $D \in {\cal F}{\cal B}^r(M, N)$ we can find 
$H \in {\rm Isot}^r_c(M, N; \supp \subset {\rm Int}\,M)_0$ such that $h : = H_1 \in {\cal D}$ satisfies $h(D) \subset E$. 
Then, from Fact~\ref{lem_C_g^4}\,(2) we have $({\cal D}_{D})^c \subset C_g^4$. 
Hence, ${\cal S}_b(M, N) = \bigcup \{ ({\cal D}_{D})^c \mid D \in {\cal F}{\cal B}^r(M, N) \} \subset C_g^4$
and Fact~\ref{fact_cl+zeta}\,(1)(ii) implies $\zeta_g \leq 4 \,clb$. 
\bit
\item[(2)] \,(i) The statements follow from Proposition~\ref{prop_frag} and Fact~\ref{fact_w-simple}\,(2). \\
(ii) The assertion (b) follows from Fact~\ref{fact_w-simple}\,(4).
\eit 
 \vspace*{-7mm} 
\end{proof}

\section{Quasimorphisms induced from the rotation angle on a cricle} 

\subsection{Rotation angle on a circle} \mbox{}

Suppose $S$ is a $C^r$ circle $(r \in \oIZ_{\geq 0})$. 
To introduce a signed angle on $S$ with the total angle $=$ 1, we fix a $C^r$ diffeomorphism $\IR/\IZ \approx S$. 
This determines a commutative $C^r$ group structure $+$ on $S$.
The $C^r$ universal covering $\pi_S : \IR \to \IR/\IZ \approx S$ is a group epimorphism. 
For any $C^r$ manifold $X$ the space $C^r(X, S)$ forms a commutative $C^r$ group under the sum of maps $(\phi + \psi)(x) = \phi(x) + \psi(x)$ $(x \in X)$. 

\bnot\label{not_S} \mbox{} 
\benum 
\item Each $a \in S$ induces the translation $\theta_a(x) = x+a$ on $S$. 
We call these translations the rotations on $S$. 
We have the subgroup $L_S = \{ \theta_a \mid a \in S\}$ of ${\rm Diff}^r(S)_0$ and 
the $C^r$ group isomorphism $\theta : S\to L_S : a \mapsto \theta_a$. 

\item 
For $a \in S$ let $\e_a \in C^r(X, S)$ denote the constant map $\e_a(x) = a$ $(x \in X)$. 
We set $\phi + a := \phi + \e_a = \theta_a \circ \phi$ for $\phi \in C^r(X, S)$.

\item The path space ${\cal P}(S) = C^0(I, S)$ forms a group. 
We have $\alpha + \beta$, \ $-\alpha$, \ $\alpha + a \equiv \theta_a \circ \alpha$, \ $\e_a$, etc.  \ for $\alpha, \beta \in {\cal P}(S)$ and $a \in S$. 
For each $\sigma \in \IR$ we define $\delta_\sigma \in {\cal P}^r(S)_0$ by $\delta_\sigma(t) = \pi_S(\sigma t)$ $(t \in I)$. 
\eenum 
\enot 

\bdefn\label{defn_lambda} 
Each $C^0$ path $\gamma \in {\cal P}(S)$ has a lift $\widetilde{\gamma} \in {\cal P}(\IR)$ (i.e., $\pi_S \widetilde{\gamma} = \gamma$).  
The rotation angle of $\gamma$ is defined by $\lambda_S(\gamma) = \widetilde{\gamma}(1) - \widetilde{\gamma}(0)$, 
which is independent of the choice of the lift $\widetilde{\gamma}$. 
\edefn 

The function $\lambda \equiv \lambda_S : {\cal P}(S) \to \IR$ has the following properties. 

\bfact\label{fact_lambda} Suppose $\alpha, \beta, \gamma \in {\cal P}(S)$. 
\benum 
\item 
\bit 
\itemI $\lambda(\alpha \ast \beta) = \lambda(\alpha) + \lambda(\beta)$ if $\alpha(1) = \beta(0)$. 
\itemII $\lambda(\gamma_-) = - \lambda(\gamma)$.  \hsh (iii) $\lambda(\alpha + a) = \lambda(\alpha)$ \ $(a \in S)$. 
\eit 
\item 
\bit 
\itemI $\alpha \simeq_\ast \beta$ iff $\alpha(0) = \beta(0)$ and $\lambda(\alpha) = \lambda(\beta)$.   

\itemII $\lambda(\gamma_\sigma) = \lambda(\gamma)$ for any $\sigma \in {\cal C}^0(I, I)$ with $\sigma(0) = 0$ and $\sigma(1) = 1$. 
\eit 
\item 
\bit 
\itemI $\lambda : {\cal P}(S) \to \IR$ is a group epimorphism.  

\itemII The map $\delta : \IR \to {\cal P}^r(S)_0$: $\sigma \mapsto \delta_\sigma$ is 
a group monomorphism and $\lambda(\delta_\sigma) = \sigma$. 
\eit 

\item If $\gamma \in \Omega(S)$ (i.e., $\gamma$ is a closed path ($\gamma(0) = \gamma(1)$)), then 
\bit 
\itemI $\lambda(\gamma) = \deg \gamma \in \IZ$ \hsh 
(ii) \ $\lambda(f\gamma) = (\deg f)\, \lambda(\gamma)$ for any $f \in {\cal C}^0(S,S)$. 
\eit 
\item $|\lambda(f\gamma) - \lambda(\gamma)| < 1$ for any $f \in {\rm Diff}^0(S)_0 = {\rm Diff}^0_+(S)$. 
\eenum
\efact

\bpfb 
\benum 
\item[(3)] (i) Since $\alpha + \beta \simeq_\ast (\alpha \ast \alpha(1)) + (\beta(0) \ast \beta) = (\alpha + \beta(0)) \ast (\beta + \alpha(1))$, it follows that \\
\hsp $\lambda(\alpha + \beta) = \lambda((\alpha + \beta(0)) \ast (\beta + \alpha(1))) 
= \lambda(\alpha + \beta(0)) + \lambda(\beta + \alpha(1)) = \lambda(\alpha) + \lambda(\beta)$
\vskip 1mm 
\item[(5)] Take a lift $\widetilde{f} \in {\rm Diff}^0(\IR)_0 = {\rm Diff}^0_+(\IR)$ of $f$ and a lift $\widetilde{\gamma} \in {\cal P}(\IR)$ of $\gamma$. 
It follows that 
\bit 
\itema $\widetilde{f}$ is monotonically increasing and $\widetilde{f}(x+1) = \widetilde{f}(x) + 1$ $(x \in \IR)$ and 
\itemb $\widetilde{f} \widetilde{\gamma} \in  {\cal P}(\IR)$ is a lift of $f\gamma \in {\cal P}(S)$ so that  
$\lambda(f\gamma) = \widetilde{f}(\widetilde{\gamma}(1)) - \widetilde{f}(\widetilde{\gamma}(0))$. 
\eit 
Let $n := \lfloor{\lambda(\gamma)}\rfloor \in \IZ$. 
Then, $\widetilde{\gamma}(1) = \widetilde{\gamma}(0) + \lambda(\gamma) \in \widetilde{\gamma}(0) + [n, n+1)$ and 
$\widetilde{f}(\widetilde{\gamma}(1)) \in \widetilde{f}(\widetilde{\gamma}(0)) + [n, n+1)$ by (a). 
Hence, $\lambda(\gamma), \lambda(f\gamma) \in [n,n+1)$ so that $|\lambda(f\gamma) - \lambda(\gamma)| < 1$. 
\eenum 
\vspace*{-7mm} 
\epf 

\begin{remark}\label{rem_lambda} 
If $f \in C^0(S, S)$ and ${\rm deg}\,f = n$, then any lift $\widetilde{f} \in C^0(\IR, \IR)$ of $f$ 
satisfies $\widetilde{f}(t+1) = \widetilde{f}(t) + n$ $(t \in \IR)$. 
In particular, if ${\rm deg}\,f = 1$ (i.e., $f$ is an orientation-preserving homotopy equivalence), then $\widetilde{f}(t+1) = \widetilde{f}(t) + 1$. 
However, $|\lambda(f\gamma) - \lambda(\gamma)|$ may take arbitrarily large value if $f \not\in {\rm Diff}^0_+(S)$ 
(i.e., the lift $\widetilde{f}$ is not monotonically increasing). 
\end{remark} 

\subsection{Rotation angle of homotopies on a circle} \mbox{}

Next we consider the rotation angle for homotopies on the circle $S$. 
We fix a distinguished point $p$ in $S$. Then, each $F \in {\rm Homot}(S)$ induces a path $F_p := F(p, \ast) \in {\cal P}(S)$ and 
we have the function \\
\hsp \hsh $\mu_p : {\rm Homot}(S) \to \IR$ : $\mu_p(F) = \lambda_S(F_p)$. 

\bfact\label{fact_mu} The function $\mu \equiv \mu_p$ has the following properties. Suppose $F, G, H \in {\rm Homot}(S)$.
\benum 
\item (i) \ $\mu(G \ast H) = \mu (G) + \mu(H)$ if $G_1 = H_0$. 
\hsp (ii) \ $\mu(F^-) = - \mu(F)$ 
\item $\mu(G) = \mu(H)$ if $G \simeq_\ast H$.  
\item $\mu(G_\sigma) = \mu(G)$ for any $\sigma \in {C^0(I, I)}$ with $\sigma(0) = 0$ and $\sigma(1) = 1$. 

\item $\mu(GH) = \mu(G_0H) + \mu(GH_1) = \mu(GH_0) + \mu(G_1H)$. 
\item $|\mu(fG) - \mu(G)| <1$ for any $f \in {\rm Diff}^0(S)_0$. 
\item $|\mu(GH) - \mu(G) - \mu(H)| < 1$ if $G_1 \in {\rm Diff}^0(S)_0$ and $H_0 = \id_S$. 
\item $|\lambda(G_q) - \lambda(G_p)| < 1$ if $G_0, G_1 \in {\rm Diff}^0(S)_0$ and $q \in S$. 
\eenum 
\efact 

\bpfb The assertions (1) $\sim$ (6) follow from Fact~\ref{fact_lambda} and the following observations : \\
\hsp (1)\ (i) \ $(G \ast H)_p = G_p \ast H_p$, \ (ii) \ $(F^-)_p = (F_p)_-$, \hsh 
(2) \ $\Phi : G \simeq_\ast H$ $\Lra$ $\Phi_p : G_p \simeq_\ast H_p$, \\
\hsp (3) \ $G_\sigma \simeq_\ast G$, \hsh 
(4) \ $GH \simeq_\ast (G_0H) \ast (GH_1) \simeq_\ast (GH_0) \ast (G_1H)$, \\ 
\hsp (5) \ $(fG)_p = f G_p$, \hsh (6) \ $\mu(GH) - \mu(G) - \mu(H) = \mu(G_1H) - \mu(H)$. 

(7) There exists points $\widetilde{p}, \widetilde{q} \in \IR$ with $\pi_S(\widetilde{p}) = p$, $\pi_S(\widetilde{q}) = q$ and $\widetilde{q} \in [\widetilde{p}, \widetilde{p}+1)$. 
Take a lift $\widetilde{G} \in {\rm Homot}(\IR)$ of $G$ (i.e., $\pi_S \widetilde{G} = G (\pi_S \times \id_I)$). 
Then, $\widetilde{G}_{\widetilde{p}}, \widetilde{G}_{\widetilde{q}} \in {\cal P}(\IR)$ are lifts of $G_p, G_q \in {\cal P}(S)$ respectively.
Since $\widetilde{G}_t$ is a lift of $G_t$ for each $t \in I$ and $G_0, G_1 \in {\rm Diff}^0(S)_0$, it follows that \ \ for $t = 0,1$, 
\bit 
\itema $\widetilde{G}_t$ is monotonically increasing \ and \ $\widetilde{G}_t(x+1) = \widetilde{G}_t(x) + 1$ $(x \in \IR)$ \ \ and 
\itemb $\widetilde{G}_t(\widetilde{p}) \leq \widetilde{G}_t(\widetilde{q})  < \widetilde{G}_t(\widetilde{p} + 1) = \widetilde{G}_t(\widetilde{p}) + 1$. 
\eit  
Hence, we have \hsh 
$\widetilde{G}_1(\widetilde{p}) - (\widetilde{G}_0(\widetilde{p}) + 1) 
< \widetilde{G}_1(\widetilde{q}) - \widetilde{G}_0(\widetilde{q}) 
< \widetilde{G}_1(\widetilde{p}) + 1 - \widetilde{G}_0(\widetilde{p})$. \\ 
Since \hsh $\lambda(G_p) = \widetilde{G}_{\widetilde{p}}(1) - \widetilde{G}_{\widetilde{p}}(0) 
= \widetilde{G}_1(\widetilde{p}) - \widetilde{G}_0(\widetilde{p})$ \ \ and \ \ 
$\lambda(G_q) = \widetilde{G}_1(\widetilde{q}) - \widetilde{G}_0(\widetilde{q})$, it follows that \\
\hsp $\lambda(G_p) - 1 < \lambda(G_q) < \lambda(G_p) + 1$ \ \ so that \ \ $|\lambda(G_q) - \lambda(G_p)| < 1$. 
\epf 

The subset ${\rm Homot}(S)_{\id, \id}$ is a submonoid of ${\rm Homot}(S)$ 
and the restriction $\mu_p : {\rm Homot}(S)_{\id, \id} \to \IZ$ is a monoid epimorphism. 

For isotopies we have the following conclusions. 

\begin{lem}\label{lem_mu_isot} Suppose $F, G \in {\rm Isot}^0(S)_0$ and $h \in {\rm Diff}^0(S)_0$. \\[1mm] 
\hsh \btab[t]{lll}
{\rm (1)} \ $|\mu(hF) - \mu(F)| < 1$ \hsh & {\rm (2)} \ $|\mu(Fh) - \mu(F)| < 1$ \hsh & {\rm (3)} \ $|\mu(FG) - \mu(F) - \mu(G)| < 1$ \\[2mm]
{\rm (4)} \ $\mu(F^{-1}) = - \mu(f^{-1}F)$ & {\rm (5)} \ $|\mu(F) + \mu(F^{-1})| < 1$ & {\rm (6)} \ $|\mu([F,G])| < 3$ 
\etab 
\end{lem}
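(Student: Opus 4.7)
Parts (1)--(3) should be quick bookkeeping from the single-path estimates of Facts~\ref{fact_lambda} and~\ref{fact_mu}. The identity $(hF)_p = h\circ F_p$ reduces (1) to Fact~\ref{fact_lambda}(5); the identity $(Fh)_p = F_{h(p)}$ reduces (2) to Fact~\ref{fact_mu}(7) (whose hypotheses $F_0, F_1 \in {\rm Diff}^0(S)_0$ hold automatically as $F$ is based at $\id$); and (3) is a direct instance of Fact~\ref{fact_mu}(6) with $F_1 = f \in {\rm Diff}^0(S)_0$ and $G_0 = \id$.

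For (4) I would pass to universal cover lifts. Choose a lift $\widetilde{F}\colon \IR \times I \to \IR$ of $F$ with $\widetilde{F}_0 = \id_\IR$ and a preimage $\widetilde{p} \in \pi_S^{-1}(p)$, and set $\widetilde{f} := \widetilde{F}_1$; then $\mu(F) = \widetilde{f}(\widetilde{p}) - \widetilde{p}$. Taking the pointwise inverse $t \mapsto (\widetilde{F}_t)^{-1}$ as the lift of $F^{-1}$ starting at $\id_\IR$ gives $\mu(F^{-1}) = \widetilde{f}^{-1}(\widetilde{p}) - \widetilde{p}$. The continuous lift $t \mapsto \widetilde{f}^{-1}(\widetilde{F}_t(\widetilde{p}))$ of the path $(f^{-1}F)_p$ gives $\mu(f^{-1}F) = \widetilde{p} - \widetilde{f}^{-1}(\widetilde{p})$, so $\mu(F^{-1}) = -\mu(f^{-1}F)$. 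Statement (5) is then immediate from (4) and (1) applied with $h = f^{-1}$: $\mu(F) + \mu(F^{-1}) = \mu(F) - \mu(f^{-1}F)$, so $|\mu(F) + \mu(F^{-1})| < 1$.

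Part (6) is the main obstacle, since chaining (3) three times together with (5) only yields $|\mu([F,G])| < 5$. I would again argue via lifts. With $\widetilde{F}, \widetilde{G}$ chosen as above, the family $t \mapsto [\widetilde{F}_t, \widetilde{G}_t]$ is a lift of the commutator isotopy $[F,G]$ starting at $\id_\IR$, hence $\mu([F,G]) = [\widetilde{f}, \widetilde{g}](\widetilde{p}) - \widetilde{p}$. Introducing the intermediate points $y_0 = \widetilde{p}$, $y_1 = \widetilde{g}^{-1}(y_0)$, $y_2 = \widetilde{f}^{-1}(y_1)$, $y_3 = \widetilde{g}(y_2)$, $y_4 = \widetilde{f}(y_3)$, and the $1$-periodic functions $h_f(x) := \widetilde{f}(x) - x$, $h_g(x) := \widetilde{g}(x) - x$, a telescoping calculation yields
\[
\mu([F,G]) \;=\; y_4 - y_0 \;=\; \bigl(h_f(y_3) - h_f(y_2)\bigr) + \bigl(h_g(y_2) - h_g(y_1)\bigr).
\]
Because $\widetilde{f}$ and $\widetilde{g}$ are orientation-preserving homeomorphisms of $\IR$ satisfying $\widetilde{f}(x+1) = \widetilde{f}(x) + 1$ and $\widetilde{g}(x+1) = \widetilde{g}(x) + 1$, strict monotonicity forces $|h_f(x) - h_f(y)| < 1$ and $|h_g(x) - h_g(y)| < 1$ for all $x, y \in \IR$ (reduce to a fundamental domain and use $0 < \widetilde{f}(x_2) - \widetilde{f}(x_1) < 1$ whenever $0 < x_2 - x_1 < 1$). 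Hence $|\mu([F,G])| < 2$, which in particular yields the claimed bound $< 3$.
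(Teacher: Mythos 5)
Your proof is correct, and the treatment of (1), (2), (3), (5) matches the paper exactly: (1) via Fact~\ref{fact_lambda}(5) (equivalently Fact~\ref{fact_mu}(5)), (2) via $(Fh)_p = F_{h(p)}$ and Fact~\ref{fact_mu}(7), (3) via Fact~\ref{fact_mu}(6), and (5) as a corollary of (1) and (4).

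For (4) and (6), however, you take a genuinely different route. The paper stays in the world of homotopies and uses the reshuffling identity $\mu(GH) = \mu(G_0 H) + \mu(G H_1)$ (Fact~\ref{fact_mu}(4)): for (4) it writes $\id_S \simeq_\ast F^{-1} \ast (f^{-1}F)$ and reads off $0 = \mu(F^{-1}) + \mu(f^{-1}F)$; for (6) it reshuffles $[F,G] \simeq_\ast F \ast (fgF^{-1}g^{-1})$ and then applies (4), (2), and Fact~\ref{fact_mu}(5) once each to get the three terms summing to strictly less than $3$. You instead lift everything to $\IR$: the basic and correct observation is that $\mu(F) = \widetilde{F}_1(\widetilde p) - \widetilde p$ for the lift of $F$ normalized by $\widetilde F_0 = \id_\IR$, and similarly for products and commutators since lifting is multiplicative under this normalization. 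For (4) this gives the identity by a one-line computation. For (6), your telescoping $\mu([F,G]) = (h_f(y_3) - h_f(y_2)) + (h_g(y_2) - h_g(y_1))$ together with the oscillation bound $\sup h_f - \inf h_f < 1$ (a standard fact about lifts of degree-one circle maps, proved correctly by your fundamental-domain argument) gives the strictly sharper bound $|\mu([F,G])| < 2$, which of course implies the stated $< 3$. Your approach is arguably cleaner and more quantitative; the paper's approach has the advantage of reusing the homotopy-level toolkit (Fact~\ref{fact_mu}) already built for the rest of Section~5, but its estimate for (6) loses a full unit by summing three independent error terms.
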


\begin{proof} 
The assertions (1) and (3) follow from Fact~\ref{fact_mu}\ (5) and (6) respectively.
\bit 
\item[(2)] Since $(Fh)_p = F_{h(p)}$, we have \ $|\mu(Fh) - \mu(F)| = |\lambda(F_{h(p)}) - \lambda(F_p)|< 1$ \ by Fact~\ref{fact_mu}\ (7). 
\eit 

Let $f := F_1, g := G_1 \in {\rm Diff}^0(S)_0$. 

\bit \item[(4)] Since $\id_S = F^{-1} F \simeq_\ast (F^{-1} \ast f^{-1})(\id \ast F) = F^{-1} \ast (f^{-1} F)$, it follows that \\
\hsh $0 = \mu(\id_S) = \mu(F^{-1} \ast (f^{-1} F)) = \mu(F^{-1}) + \mu(f^{-1}F)$ \ \ and \ \ $\mu(F^{-1}) = - \mu(f^{-1}F)$. 

\item[(5)] $|\mu(F) + \mu(F^{-1})| = |\mu(F) - \mu(f^{-1}F)| < 1$ \ \ by (1).
\vskip 1mm 

\item[(6)] It follows that 
\vskip 0.5mm 
\bit 
\itemI $\bary[t]{@{}l@{ \ }l}
[F,G] = FGF^{-1}G^{-1} & \simeq_\ast 
(F \ast f)(G \ast g)(\id_S \ast F^{-1})(G^{-1} \ast g^{-1}) \\[1.5mm]
& = (F G\, \id_{S^1} G^{-1}) \ast (fgF^{-1}g^{-1}) = F \ast (fgF^{-1}g^{-1}), 
\eary$ 
\vskip 1mm 
\itemII $\bary[t]{@{}l@{ \ }l}
\mu([F,G]) & = \mu(F) + \mu(fgF^{-1}g^{-1}) \\[1.5mm]
& = \mu(F) + \mu(F^{-1}) + \mu(fgF^{-1}g^{-1}) - \mu(F^{-1}g^{-1}) + \mu(F^{-1}g^{-1}) - \mu(F^{-1}), 
\eary$
\vskip 1.5mm 
\itemiii (a) \ $|\mu(F) + \mu(F^{-1})| < 1$ \ \ by (4), \hsh (b) \ $|\mu(F^{-1}g^{-1}) - \mu(F^{-1})| < 1$ \ \ by (2) \ and \\[1mm] 
(c) \ $\mu(fgF^{-1}g^{-1}) - \mu(F^{-1}g^{-1})| < 1$ \ \ by Fact~\ref{fact_mu}\,(5). 
\eit 
\vskip 1mm 
This implies that $|\mu([F,G])| < 3$. 
\eit
\vspace*{-7mm} 
\end{proof} 

\bfact\label{fact_mu_homo} \mbox{} 
\benum
\item 
\bit 
\itemI The function $\mu_p : {\rm Isot}^r(S)_0 \to \IR$ is a surjective quasimorphism with defect 1. 
\itemII The function $\mu_p| : {\rm Isot}^r(S)_{\id, \id} \to \IZ$ is a group epimorphism. 
\eit 
\item We have the following diagrams.  
The first one corresponds to $(\sharp)$ in Fact~\ref{fact_setting}. 
The conditions in Setting~\ref{setting_diagram} follow from (1) and Fact~\ref{fact_mu}\,(4). 
This diagram reduces to the second one 
by Fact~\ref{fact_reduction}\,(2) with taking $L := \Omega_0({\rm Diff}^r(S), \id) < \Omega({\rm Diff}^r(S), \id) = {\rm Isot}^r(S)_{\id,\id}$. \\[2mm] 
\hspace*{-5mm} 
\raisebox{-3mm}{
$\xymatrix@M+1pt{
{\rm Isot}^r(S)_{\id,\id} \ar@{->>}[d]_-{\mu_p|} \ar@{}[r]|*{\subset} & {\rm Isot}^r(S)_0 \ar@{->>}[d]^-{\mu_p} \ar@{->>}[r]^-R 
& {\rm Diff}^r(S)_0 \ar@{->>}[d]^{\widehat{\mu_p}} \\
\IZ \ar@{}[r]|*{\subset} & \IR \ar@{->>}[r] & \IR/\IZ
}$}
\hsh 
\raisebox{-3mm}{
$\xymatrix@M+1pt{
\pi_1({\rm Diff}^r(S), \id) \ar@{->>}[d]_-{\widetilde{\mu_p|}} \ar@{}[r]|*{\subset} 
& \widetilde{\rm Diff}{}^r(S)_0 \ar@{->>}[d]^-{\widetilde{\mu_p}} \ar@{->>}[r]^-{\widetilde{R}} 
& {\rm Diff}^r(S)_0 \ar@{->>}[d]^{\widehat{\mu_p}} \\
\IZ \ar@{}[r]|*{\subset} & \IR \ar@{->>}[r] & \IR/\IZ
}$}
\eenum 
\efact 

\subsection{Deformation of isotopies on a circle} \mbox{} 

This subsection reviews some basic deformation properties of isotopies on a circle. 
Suppose $S$ is a $C^r$ circle $(r \in \oIZ_{\geq 0})$ equipped with a $C^r$ universal covering $\pi_S : \IR \to \IR/\IZ \approx S$ and a distinguished point $p$. 

\bnot\label{not_rotation} \mbox{} 
\benum 
\item 
Each $\alpha \in {\cal P}^r(S)$ induces 
a $C^r$ isotopy $\theta_\alpha := (\theta_{\alpha(t)})_{t \in I} \in {\rm Isot}^r(S)$. Note that 
\bit 
\itemI $\theta_\alpha =\id_S + \alpha = \theta \circ \alpha \in {\cal P}^r(L_S) \hra {\rm Isot}^r(S)$, $\theta_\alpha(x,t) = x + \alpha(t)$ 
and that $\theta_\eta = (\theta_{\eta(s,t)})_{(s,t) \in I^2}$. 

\itemII The map $\theta : {\cal P}^r(S) \ni \alpha \lra \theta_\alpha \in {\rm Isot}^r(S)$ is a $C^r$ group monomorphism.  
\itemiii Each $C^r$ path-homotopy $\eta : \alpha \simeq_\ast \beta$ rel ends induces a $C^r$ isotopy 
$\theta_\eta = (\theta_{\eta_t})_{t \in I} : \theta_\alpha \simeq_\ast \theta_\beta$  rel ends between isotopies in ${\rm Isot}^r(S)$. 
\eit 

\item We use the following notations : 
\bit 
\itemI $f + a := f + \e_a \equiv \theta_a \circ f \in C^r(S, S)$ for $f \in C^r(S, S)$ and $a \in S$ (and $\e_a \in C^r(S, S)$). 

\itemII $F + \alpha := \theta_\alpha \circ F = (F_t + \alpha(t))_{t \in I} \in {\rm Homot}(S)$ 
for $F \in {\rm Homot}(S)$ and $\alpha \in {\cal P}(S)$. 

$F + a := F + \e_a = \theta_a \circ F = (F_t + a)_{t \in I}$ for $a \in S$ (and $\e_a \in {\cal P}(S)$). 
\eit 
\eenum
\enot 

\bfact\label{fact_inverse_lambda} \mbox{} 
Suppose $F, G \in {\rm Homot}(S)$, $f \in C^0(S,S) \subset {\rm Homot}(S)$ and $\alpha \in {\cal P}(S)$, $a \in S$. 
\benum 
\item $(F + G)_p = F_p + G_p$, \hsh $(F + \alpha)_p = F_p + \alpha$, \hsh $(F + a)_p = F_p + a$, \\
$(\theta_\alpha f)_p = \alpha + f(p)$, \hsh $(f\theta_\alpha)_p = f(\alpha + p)$, \hsh $(F \theta_a)_p = F_{p+a}$ 

\item $\mu(F + G) = \mu(F) + \mu(G)$, \hsh $\mu(F + \alpha) = \mu(F) + \lambda(\alpha)$, \hsh $\mu(F + a) = \mu(F)$
\item $\mu(\theta_\alpha f) = \lambda(\alpha) = \mu(\theta_\alpha)$, \hsh $\mu(\theta_\alpha F) = \mu(F) + \lambda(\alpha)$, \hsh  
$\mu(f \theta_\alpha) = (\deg\,f) \lambda(\alpha)$ if $\alpha$ is a closed path. \\
$\mu(F \theta_a) = \lambda(F_{p+a})$, \hsh  
$\bary[t]{@{}l@{ \ }l}
\mu(F \theta_\alpha) & = \mu(F_0 \theta_\alpha) + \mu(F \theta_{\alpha(1)}) = \lambda(F_0(\alpha + p)) + \lambda(F_{p+\alpha(1)}) \\[2mm]
& = \mu(F \theta_{\alpha(0)}) + \mu(F_1 \theta_\alpha) =  \lambda(F_{p+\alpha(0)}) + \lambda(F_1(\alpha + p)) 
\eary$ 
\vskip 2mm 
\item Consider the map \ $\eta \equiv \eta_p : {\rm Homot}(S) \lra {\cal P}(S)$ : $\eta(F) = F_p - p$. \ We have \ (i) \ $\mu = \lambda\eta$ \\ 
 (ii) \ $\eta(\theta_\alpha) = \alpha$ \ $(\alpha \in {\cal P}(S))$ \ and \ 
(iii) \ the map $\rho := \theta \delta : \IR \to {\rm Isot}^r(S)$ is a right inverse of $\mu$. 
\eenum 
\efact 

\bfact\label{fact_diff_sdr} \mbox{} For $a,b \in S$ and $s \in \oIZ_{\geq 0}$, $s \leq r$ we set ${\cal D}^s_{a,b} := \{ f \in {\rm Diff}^s(S)_0 \mid f(a) = b \}$ and 
$\theta_{a,b} := \theta_{b-a} \in L_S$ (i.e., the rotation on $S$ with $\theta_{a,b}(a) = b$). 

\benum 
\item There exists a canonical $C^0$ strong deformation retraction (SDR) 
$\psi : {\cal D}^0_{0,0} \searrow \{ \id_S \}$ of ${\cal D}^0_{0,0}$ onto the singleton $\{ \id_S \}$. 
In fact, each $f \in {\cal D}^0_{0,0}$ has a unique lift $\widetilde{f} \in \widetilde{\cal D}^0_{0,0}$, where \\ 
\hsp \hsh $\widetilde{\cal D}^0_{0,0} := \{ h \in {\rm Diff}^0(\IR) \mid h(x+1) = h(x) + 1 \ (x \in \IR), \, h(0) = 0\}$, \\
and $\psi(f,t) \in {\cal D}^0_{0,0}$ $(t \in I)$ is covered by $\widetilde{f}_t := (1-t)\widetilde{f} + t \,\id_{\IR} \in \widetilde{\cal D}^0_{0,0}$ under the covering $\pi_S$. 

\item The group $S$ acts on ${\rm Diff}^0(S)_0$ by \ $a \cdot f = \theta_a f = f + a$ \ $(a \in S, f \in {\rm Diff}^0(S)_0)$.
The SDR $\psi$ extends to an $S$-equivariant $C^0$ SDR $\phi : {\rm Diff}^0(S)_0 \searrow L_S$ defined by \ 
$\phi(f,t) = \psi(f - f(0), t) + f(0)$. 

\item For any $a,b \in S$ we have a $C^0$ SDR \ $\psi_{a,b} : {\cal D}^0_{a,b} \searrow \{ \theta_{a,b} \}$ : $\psi_{a,b}(f,t) = \theta_b \psi(\theta_{-b} f\theta_a,t)\theta_{-a}$. 
In the partition ${\rm Diff}^0(S)_0 = \bigcup_{a \in S} {\cal D}^0_{0,a}$, 
the SDR $\phi$ restricts to $\psi_{0,a}$ on ${\cal D}^0_{0,a}$.  

\item There exists an $S$-equivariant $C^0$ diffeomorphism \ $\chi : S \times {\cal D}^0_{0,0} \approx {\rm Diff}^0(S)_0$ : \ $\chi(a, f) = f + a$, 
where $S$ acts on $S \times {\cal D}^0_{0,0}$ by $a \cdot (b, g) = (a+b, g)$ $(a \in S, (b,g) \in S \times {\cal D}^0_{0,0})$. 
Under the diffeomorphism $\chi$,  
the SDR $\phi$ corresponds with the SDR $\phi'  : S \times {\cal D}^0_{0,0} \searrow S \times \{ \id_S \}$ : $\phi'(a, g, t) = (a, \psi(g,t))$. 

\item For any $s \in \oIZ_{\geq 0}$, $s \leq r$, the $C^0$ SDR's $\psi$, $\phi$ and $\phi_{a,b}$ restrict to $C^s$ SDR's \\
\hsp $\psi| : {\cal D}^s_{0,0} \searrow \{ \id_S \}$, \ \ $\phi| : {\rm Diff}^s(S)_0 \searrow L_S$ \ \ and \ \  
$\psi_{a,b}| : {\cal D}^s_{a,b} \searrow \{ \theta_{a,b} \}$. \\
The restriction of $\chi$ also induces an $S$-equivariant $C^s$ diffeomorphism \ $\chi| : S \times {\cal D}^s_{0,0} \approx {\rm Diff}^s(S)_0$. 
\eenum 
\efact

\blem\label{lem_isot} \mbox{} 
Suppose $F \in {\rm Isot}^r(S)_{f,g}$. Let $\sigma :=\mu(F) \equiv \lambda(F_p)$. 
Then, there exists $G \in {\rm Isot}^r(S)_{f,g}$ such that $F \simeq_\ast G$ in ${\rm Isot}^r(S)_{f,g}$ and 
$G_p = \delta_\sigma + f(p)$. 
\elem  

\bpfb
Consider the path $\gamma := \delta_\sigma + f(p) - F_p \in {\cal P}^r(S)$. 
It follows that $\gamma(0) = 0$ since $\delta_\sigma(0) = 0$ and $F_p(0) = f(p)$ and 
that $\lambda(\gamma) = \lambda(\delta_\sigma) - \lambda(F_p) = \sigma - \sigma = 0$. % by Fact~\ref{fact_lambda}\,(3)(ii).  
Hence, by Fact~\ref{fact_lambda}\,(2)(i) 
we have a path-homotopy $\eta : \e_0 \simeq_\ast \gamma$.
 Let $G := F + \gamma = \theta_\gamma \circ F \in {\rm Isot}^r(S)_{f,g}$. 
 Then, it folllows that \\
 \hsh $F + \eta  := (F + \eta_t)_{t \in I} \equiv \theta_\eta \circ F : F \simeq_\ast G$ \ in ${\rm Isot}^r(S)_{f,g}$ \ and \ 
 $G_p = F_p + \gamma = \delta_\sigma + f(p)$ \ as required. 
\epf 

\bfact\label{fact_mu=0} \mbox{} 
\benum
\item If $F \in {\rm Isot}^r(S)_{\id,\id}$ and $\mu(F) = 0$, then $F \simeq_\ast \id_S$ in ${\rm Isot}^r(S)_{\id,\id}$. 

\item Suppose $F, G \in {\rm Isot}^r(S)_{f,g}$. Then, $F \simeq_\ast G$ in ${\rm Isot}^r(S)_{f,g}$ iff $\mu(F) = \mu(G)$. 
\item In Fact~\ref{fact_mu_homo}\,(2)  
\bit 
\itemI $\widetilde{\mu|} : \pi_1({\rm Diff}^r(S),\id_S) \to \IZ$ \ is a group isomorphism, 
\itemII the map $(\widetilde{R}, \widetilde{\mu}) : \widetilde{\rm Diff}{}^r(S)_0 \lra {\rm Diff}^r(S)_0 \times \IR$ : 
$(\widetilde{R}, \widetilde{\mu})([F]) = (F_1, \mu(F))$ \ is injective. 
\eit 
\eenum 
\efact 

\bpfb
\benum
\item By Lemma~\ref{lem_isot}  there exists $G \in {\rm Isot}^r(S)_{\id, \id}$ such that $F \simeq_\ast G$ and $G_p = \e_p$.
By Fact~\ref{fact_diff_sdr}\,(3) there exists a $C^r$ SDR $\psi_{p,p} : {\cal D}^r_{p,p} \searrow \{ \id_S \}$. 
Then, we have $\Lambda : G \simeq_\ast \id_S$ : $\Lambda_{s,t} = \psi_{p,p}(G_s, t)$ $((s,t) \in I^2)$. 
\item Suppose $\mu(F) = \mu(G) = \sigma$. By Lemma~\ref{lem_isot} 
there exists $F', G' \in {\rm Isot}^r(S)_{f,g}$ such that $F \simeq_\ast F'$, $G \simeq_\ast G'$ and $F'_p = \delta_\sigma + f(p) = G'_p$.
Then, $H := (G')^{-1}F' \in {\rm Isot}^r(S)_{\id,\id}$ and $H_p = \e_p$, so that $\mu(H) = 0$. 
Hence, by (1) $H \simeq_\ast \id_S$ and $F' = G'H \simeq_\ast G'$. This implies that $F \simeq_\ast G$. 
\item 
\bit 
\itemI By (1) we have the exact sequence \ \  
$1 \lra \Omega_0({\rm Diff}^r(S), \id) \ \subset \ {\rm Isot}^r(S)_{\id, \id} \stackrel{\mu|}{\lra} \IZ \lra 0$. 
\itemII The assertion follows from (2). 
\eit 
\eenum
\vspace*{-7.5mm} 
\epf 

\subsection{A quasimorphism on ${\rm Isot}^r(M, L)_0$ induced from the rotation angle on cricles $L$} \mbox{} 

\bnot\label{not_(M,L)}
For $r \in \oIZ_{\geq 0}$ and $n \geq 2, m \geq 1$, let ${\cal C}^r(n, m)$ denote the class of pairs $(M, L)$ such that \\
(i) $M$ is a $C^r$ $n$-manifold possibly with boundary and $L$ is a disjoint union of $m$ $C^r$ circles $S_i$ $(i \in [m])$ in ${\rm Int}\,M$
and 
(ii) each circle $S_i$ is equipped with a $C^r$ universal covering $\pi_i : \IR \to \IR/\IZ \approx S_i$ and a distinguished point $p_i$.
\enot 

Suppose $(M, L) \in {\cal C}^r(n, m)$ and $L = \bigcup_{i \in [m]} S_i$. 
The condition (ii) determines the rotation angle $\lambda_i : {\cal P}(S_i) \to \IR$ $(i \in [m])$ and induce  the following functions for each $i \in [m]$. 

\bnot\label{not_(M,L)} \mbox{} 
\benum
\item the surjective quasimorphism $\mu_i : {\rm Isot}^r(S_i)_0 \, \to\hspace{-4.5mm}\to \, \IR$ with $D_{\mu_i} = 1$, \\ 
\hsh  which restricts to a group epimorphism $\mu_i| : {\rm Isot}^r(S_i)_{\id,\id} \, \to\hspace{-4.5mm}\to \, \IZ$, 
\item the restriction map $P_{I, i} : {\rm Isot}^r_{(c)}(M, L)_0 \,\lra\hspace{-5.5mm}\lra \, {\rm Isot}^r(S_i)_0$ is a group epimorphism, \\
\hsh  which restricts to a group homomorphism $P_{I, i}| : {\rm Isot}^r_{(c)}(M, L)_{\id,\id} \lra {\rm Isot}^r(S_i)_{\id,\id}$, 
\item $\nu_i := \mu_i \circ P_{I, i} : {\rm Isot}^r_{(c)}(M, L)_0 \, \to\hspace{-4.5mm}\to \,  \IR$ is a surjective quasimorphism with $D_{\nu_i} = 1$, \\ 
\hsh which restricts to a group homomorphism $\nu_i| = \mu_i| \circ P_{I, i}| : {\rm Isot}^r_{(c)}(M, L)_{\id,\id} \to \IZ$. 
\eenum 
\enot 

The quasimorphisms $\nu_i$ $(i \in [m])$ are combined together to yield a vector-valued quasimorphism \\
\hspp $\nu := (\nu_i)_{i \in [m]} : {\rm Isot}^r_{(c)}(M, L)_0 \, \to\hspace{-4.5mm}\to \,  \IR^m$. 

\bfact\label{fact_nu} \mbox{} 
\benum 
\item $\nu := (\nu_i)_{i \in [m]}$ is a surjective vector-valued quasimorphism with $D_\nu^{(c)} = 1$,  \\
\hsh which restricts to a group homomorphism $\nu| = (\nu_i|)_{i \in [m]} : {\rm Isot}^r_{(c)}(M, L)_{\id,\id} \lra \IZ^m$. 

\item The quasimorphism $\nu$ satisfies the condition $(\ast)$ in Setting~\ref{setting_diagram} : \\
\hsp $\nu(GH) = \nu(G) + \nu(H)$ \ for any $G \in {\rm Isot}^r_{(c)}(M,L)_{\id,\id}$ and $H \in {\rm Isot}^r_{(c)}(M,L)_0$. \\
In fact, working with $\nu^0 : {\rm Isot}^0_{(c)}(M, L)_0 \, \to\hspace{-4.5mm}\to \,  \IR^m$, it is seen that \\
\hspp $GH \simeq_\ast G \ast H$ \ and \ $\nu(GH)  = \nu^0(G \ast H) = \nu(G) + \nu(H)$.

\item $C_\nu^{(c)} := \sup \| \nu({\rm Isot}^r_{(c)}(M,L)_0^{\ c}) \|\leq 3$
\eenum 
\efact 

\bpfb (3) The group homomorphism $P_{I, i}$ restricts to the map $P_{I, i} : {\rm Isot}^r(M, L)_0^{\ c} \,\lra\hspace{-5.5mm}\lra \, {\rm Isot}^r(S_i)_0^{\ c}$. 
Hence, for any $H \in {\rm Isot}^r(M,L)_0^{\ c}$ it follows that  
$|\nu_i(H)| = |\nu_i P_{I, i}(H)| < 3$ $(i \in [m])$ by Lemma~\ref{lem_mu_isot}\,(6) and that 
$\| \nu(H) \| = \ds \max_{i \in [m]} |\nu_i(H)| < 3$. This implies $C_\nu \leq 3$.
\epf 

Hence, we can apply the notations and the whole conclusions in Subsection~\ref{subsec_lattice}, which are listed below. 

\bprop\label{prop_nu_diagram} \mbox{} Suppose $(M, L) \in {\cal C}^r(n, m)$ $(r \in \oIZ_{\geq 0}$, $n \geq 2, m \geq 1)$ and $L = \bigcup_{i \in [m]} S_i$. 
\benum 
\item[{\rm (1)}] The quasimorphism $\nu$ is incorpolated into the following diagram : \\[1mm] 
\hsp \raisebox{-8mm}{$(\dagger)$} \hspace*{-6mm} 
\raisebox{0mm}{
$\xymatrix@M+1pt{
& {\rm Isot}^r_{(c)}(M,L)_{\id,\id} \ar@{->>}[d]_-{\nu|} \ar@{}[r]|*{\subset} & {\rm Isot}^r_{(c)}(M,L)_0 \ar@{->>}[d]^-{\nu} \ar@{->>}[r]^-{R \ \ } 
& {\rm Diff}^r_{(c)}(M,L)_0 \ar@{->>}[d]^{\widehat{\nu}_{(c)}} \\
 \ar@{}[r]|*{\ \ \IZ^m \ \ >} & A_{(c)} \ar@{}[r]|*{\subset} & \IR^m \ar@{->>}[r] & \IR^m/A_{(c)}
}$} \hsf 
\btab[t]{l} 
\mbox{} \\[10mm] 
$\ell_{(c)} := {\rm rank}\,A_{(c)} \leq m$
\etab 
\vskip 3mm 
\bit 
\itemI For $f \in {\rm Diff}^r_{(c)}(M,L)_0$ and $F \in R^{-1}(f)$ \\
\hsp $R^{-1}(f) = F\,{\rm Isot}^r_{(c)}(M,L)_{\id,\id}$ \ \ and \ \  $\widehat{\nu}_{(c)}(f) = \nu(R^{-1}(f)) = \nu(F) + A_{(c)}$. 
\eit 
\vskip 1mm 

\item[{\rm (2)}]  The diagram $(\dagger)$ reduces to the following form by {\rm Fact~\ref{fact_reduction}\,(2)} 
with $L = \Omega_0({\rm Diff}^r_{(c)}(M,L)_0)$. \\[1mm] 
\hsp \raisebox{-8mm}{$(\ddagger)$} \hspace*{2mm} 
$\xymatrix@M+3pt
{\relax
1 \ar[r] & \pi_1 {\rm Diff}^r_{(c)}(M,L)_0 \ar@{}[r]|*{\subset} \ar@{->>}[d]_-{\widetilde{\nu|}_{(c)}\,} 
& \widetilde{\rm Diff}{}^r_{(c)}(M,L)_0 \ar[r]^{\pi} \ar@{->>}[d]^-{\,\widetilde{\nu}_{(c)}} & {\rm Diff}^r_{(c)}(M,L)_0 \ar@{->>}[d]^-{\,\widehat{\nu}_{(c)}} \ar[r] & 1 \\ 
0 \ar[r] & A_{(c)} \ar@{}[r]|*{\subset} & \IR^m \ar[r]^-{} & \IR^m/A_{(c)} \ar[r] & 0 \\
}$ 
\vskip 2mm 

\item[{\rm (3)}]  
\bit 
\itemI $\theta_f^{(c)} := \min \| \widehat{\nu}_{(c)}(f) \|$ \ $(f \in {\rm Diff}^r_{(c)}(M,L)_0)$. \\[1mm] 
$\theta_\nu^{(c)} := \sup \{ \theta_z^{(c)} \equiv \min \| z \| \mid z \in \IR^m/A_{(c)} \} = \sup \{ \theta_f^{(c)} \mid f \in {\rm Diff}^r_{(c)}(M,L)_0 \}$.  
\vskip 1mm 
\itemII $D_\nu^{(c)} = 1$, \ $C_\nu^{(c)} \leq 3$. 
\vskip 1.5mm 
\itemiii {\rm (a)} $cl\,f \geq \sfrac{1}{\,4\,}(\theta_f^{(c)} + 1)$ \ for any $f \in {\rm Diff}^r_{(c)}(M,L)_0^{\ \times}$. 
\hsf {\rm (b)} \ $cld\,{\rm Diff}^r_{(c)}(M,L)_0 \geq \sfrac{1}{\,4\,}(\theta_\nu^{(c)} + 1)$. 
\eit 

\item[{\rm (4)}]  the case $\ell_{(c)} < m$ : \\
There exists a group epimorphism $\chi : \IR^m/A_{(c)} \,\mbox{$\to\hspace{-4.5mm}\to$} \, \IR$ such that 
$\chi \widehat{\phi} : {\rm Diff}^r_{(c)}(M,L)_0 \,\to\hspace{-4.5mm}\to \, \IR$ is a surjective quasimorphism. 
Hence, the group ${\rm Diff}^r_{(c)}(M,L)_0$ is unbounded and not uniformly perfect. 

\item[{\rm (5)}]  the case $\ell_{(c)} = m$ : 
\bit 
\itemI $\IZ^m/A_{(c)}$ is a finite abelian group. Let $k_i^{(c)} := {\rm ord}([e_i], \IZ^m/A_{(c)}) \in \IZ_{\geq 1}$ $(i \in [m])$ and $\ds k^{(c)} := \max_{i \in [m]} \,k_i^{(c)}$. 
\itemII $\theta_f^{(c)} \leq \theta_\nu^{(c)} \leq \sfrac{1}{\,2\,}k^{(c)}$ \ $(f \in {\rm Diff}^r_{(c)}(M,L)_0)$. 
\eit 

\item[{\rm (6)}]  the case $m = 1$ : \\
\hsf $A_{(c)} = k^{(c)}\IZ < \IZ$ for a unique $k^{(c)} \in \IZ_{\geq 0}$. \\
\hsf $\ell^{(c)} < m$ \ iff \ $k^{(c)} = 0$ \ $($or \,$\ell^{(c)} = m$ \ iff \ $k \geq 1)$. 
\hspace*{-7mm}
\smash{
\raisebox{10mm}{
$\xymatrix@M+1pt{
{\rm Isot}^r_{(c)}(M,L)_{\id,\id} \ar@{->>}[d]_-{\nu|} \ar@{}[r]|*{\subset} & {\rm Isot}^r_{(c)}(M,L)_0 \ar@{->>}[d]^-{\nu} \ar@{->>}[r]^-{R \ \ } 
& {\rm Diff}^r_{(c)}(M,L)_0 \ar@{->>}[d]^{\widehat{\nu}^{(c)}} \\
k^{(c)}\IZ \ar@{}[r]|*{\subset} & \IR \ar@{->>}[r] & \IR/k^{(c)}\IZ
}$}} \\[0mm]
{\rm [1]} the case $k^{(c)} = 0 :$ \\
\hsh The map \ $\widehat{\nu}_{(c)} : {\rm Diff}^r_{(c)}(M,L)_0 \, \to\hspace{-4.5mm}\to \ \IR/0\IZ \cong \IR$ \ is a surjective quasimorphism. 
Hence, \\ 
\hsh ${\rm Diff}^r_{(c)}(M,L)_0$ is unbounded and not uniformly perfect. \\
{\rm [2]} the case $k^{(c)} \geq 1 : $  
\bit 
\item[] {\rm (i)} \ $\theta_\nu^{(c)} = k^{(c)}/2$ \hsp {\rm (ii)} \ $cld\,{\rm Diff}^r_{(c)}(M,L)_0 \geq \sfrac{1}{\,8\,}(k^{(c)}+2)$  
\eit 
\eenum 
\eprop 

\bnot\label{not_diagram_Diff_G} 
In the diagram $(\dagger)$ in {\rm Proposition~\ref{prop_nu_diagram}\,$(1)$}, consider the normal subgroups \\
\hsp \hsf ${\cal J}_{(c)} \vartriangleleft {\rm Isot}^r_{(c)}(M, L)_0$, \ 
${\cal G}_{(c)} \vartriangleleft {\rm Diff}^r_{(c)}(M, L)_0$ \ and \ ${\cal L}_{(c)} \vartriangleleft {\rm Isot}^r_{(c)}(M,L)_{\id,\id}$, \ where \\[1mm] 
\hsp \hsh ${\cal J}_{(c)} := {\rm Isot}^r_{(c)}(M; \, {\rm rel}\  {\cal U}(L))_0 = {\rm Isot}^r_{(c)}(M, L; {\rm supp} \subset M - L)_0$ \hsp $($cf. {\rm Notation~\ref{not_G}}$)$, \\[1mm] 
\hsp \hsh ${\cal G}_{(c)} := R({\cal J}_{(c)}) = {\rm Diff}^r_{(c)}(M; \, {\rm rel}\  {\cal U}(L))_0 = {\rm Diff}^r_{(c)}(M, L; {\rm supp} \subset M - L)_0$, \\[1mm] 
\hsp \hsh ${\cal L}_{(c)} := {\cal J}_{(c)} \cap {\rm Isot}^r_{(c)}(M,L)_{\id,\id} 
= {\rm Isot}^r_{(c)}(M; \, {\rm rel}\  {\cal U}(L))_{\id,\id} = {\rm Isot}^r_{(c)}(M, L; {\rm supp} \subset M - L)_{\id,\id}$. \\[1mm] 
Since $\nu(FG) = \nu(F)$ for any $F \in {\rm Isot}^r_{(c)}(M,L)_0$ and $G \in {\cal J}_{(c)}$, 
by {\rm Fact~\ref{fact_reduction}\,(1)} with $J = {\cal J}_{(c)}$, the diagram $(\dagger)$ reduces to the following diagram. \\[1mm] 
\hsp \raisebox{-8mm}{$(\natural)$} \hspace*{-10mm} 
\raisebox{0mm}{
$\xymatrix@M+2pt{
& {\rm Isot}^r_{(c)}(M,L)_{\id,\id}\big/{\cal L}_{(c)} \ar@{->>}[d]_-{\widetilde{\nu|}\ } \ar@{^{(}->}[r] 
& {\rm Isot}^r_{(c)}(M,L)_0\big/{\cal J}_{(c)} \ar@{->>}[d]^-{\ \widetilde{\nu}} \ar@{->>}[r]^-{\widetilde{R} \ \ } 
& {\rm Diff}^r_{(c)}(M,L)_0\big/{\cal G}_{(c)} \ar@{->>}[d]^{\ \widetilde{\widehat{\nu}}_{(c)}} \\
 \ar@{}[r]|*{\ \ \IZ^m \ \ >} & A_{(c)} \ar@{}[r]|*{\subset} & \IR^m \ar@{->>}[r] & \IR^m/A_{(c)}
}$} \\[1mm] 
The diagram $(\natural)$ satisfies the conditions in {\rm Setting~\ref{setting_diagram}} and has the following properties. 
$(${\rm Fact~\ref{fact_reduction}\,(1)}$)$ \\[1mm] 
\hsp $D_{\widetilde{\nu}}^{(c)} = D_{\nu}^{(c)} = 1$, \ \ $C_{\widetilde{\nu}}^{(c)} = C_\nu^{(c)} \leq 3$, \ \ 
$\theta_{[f]}^{(c)} = \theta_f^{(c)}$ $(f \in {\rm Diff}^r_{(c)}(M,L)_0)$ \ and \ \ $\theta_{\widetilde{\nu}}^{(c)} = \theta_\nu^{(c)}$.
\enot  
 
\bprop\label{prop_nu_diagram_G} Let $(M, L) \in {\cal C}^r(n, m)$ $(r \in \oIZ_{\geq 0}$, $n \geq 2, m \geq 1)$. 
\benum 
\item[{\rm (1)}] The commutator lengths \ $cl, cl_{/{\cal G}_{(c)}} : {\rm Diff}^r_{(c)}(M,L)_0 \lra \oIZ_{\geq 0}$ 
and $\widetilde{cl} : {\rm Diff}^r_{(c)}(M,L)_0\big/ {\cal G}_{(c)} \lra \oIZ_{\geq 0}$ are related as follows. 
\hfill $($cf. {\rm Proposition~\ref{prop_lower_bound}}, {\rm Example~\ref{exp_cl/N}}$)$. \\[1mm] 
{\rm (a)} \ $cl\,f \geq cl_{/{\cal G}_{(c)}}\,f = \widetilde{cl}\,[f]$ \ $(f \in {\rm Diff}^r_{(c)}(M,L)_0)$. \\[1mm] 
{\rm (b)} \ $\widetilde{cl}\,[f] \geq \sfrac{1}{\,4\,}(\min \| \widehat{\nu}_{(c)}(f) \| + 1)$ \ $(f \in {\rm Diff}^r_{(c)}(M,L)_0 - {\cal G}_{(c)})$. \\[0mm] 
{\rm (c)} \ $cld\,{\rm Diff}^r_{(c)}(M,L)_0 \geq cl_{/{\cal G}_{(c)}}d\,{\rm Diff}^r_{(c)}(M,L)_0 = \widetilde{cl}d\ {\rm Diff}^r_{(c)}(M,L)_0\big/ {\cal G}_{(c)} \geq \sfrac{1}{\,4\,}(\theta_\nu^{(c)} + 1)$. \\[1mm]
\hspace*{100mm} $(\theta_\nu^{(c)} \equiv \sup \{ \min \| z \| \mid z \in \IR^m/A_{(c)} \})$
\vskip 1mm 
\item[{\rm (2)}] For $f \in {\rm Diff}^r_{(c)}(M,L)_0$ the following estimates hold. \hfill $($cf. {\rm Lemma~\ref{lem_q/S}}$)$ \\ 
\hsh $cl_{/\cal G_{(c)}}(f) \leq clb_{/\cal G_{(c)}}(f)$, \ \  $cl\,f \leq cl_{/\cal G_{(c)}}(f) + cld\,\cal G_{(c)}$ \ and \ $clb\,f \leq clb_{/\cal G_{(c)}}(f) + clbd\,\cal G_{(c)}$. 
\eenum  
\eprop  

\section{Factorization of isotopies along a union of circles} 

\subsection{Factorization of isotopies along a circle} \mbox{} 

In this subsection we discuss factorization of isotopies around a circle. 

\begin{setting}\label{setting_circle} 
Suppose $(M, S) \in {\cal C}^r(n, 1)$ $(r \in \oIZ_{\geq 2}, n \geq 2)$. 
Take a tubular neighborhood $D$ of $S$ in $M$, a normal unit disk bundle $E$ of $S$ in $M$ and 
a $C^r$ diffeomorphism $\chi : E \approx D$ such that $\chi = \id$ on $S$. 
The circle is equipped with a universal covering $\pi : \IR \, \to\hspace{-4.5mm}\to \, \IR/\IZ \approx S$ and a distinguished point $p$ in $S$, 
which determine the quasimorphism \\
\hspp $\nu_S : {\rm Isot}^r(M, S)_0 \lra \IR$ : $\nu_S(F) = \lambda_S(F_p)$.  
\end{setting}

\bnot\label{notation_D_J} We use the following notations. 
\benum 
\item The symbol ${\cal I}(S)$ denotes the set of (closed) arcs in $S$. 
For $J \in {\cal I}(S)$ we set $J' := S - {\rm Int}\,J \in {\cal I}(S)$. The rotation angle $\lambda(J)$ is defined as the length of $J$ in $S$ (or 
$\lambda(J) := |\lambda(\alpha)|$ for any parametrization $\alpha : I \to J$). 
Note that $J, K \in {\cal I}(S)$ and $\lambda(J), \lambda(K) > 1/2$, then ${\rm Int}\, J \cap {\rm Int}\, K \neq \emptyset$. 
\item For $J \in {\cal I}(S)$ a tubular neighborhood of $J$ in $(M, S)$ means a subset $D_J$ of $D$ of the form : \\
\hspp $D_J = \chi(sE|_J) \subset D$ \ \ for some $s \in (0,1]$. \\
The horizontal bounday of $D_J$ is defined by $\partial_+ D_J := \chi(s (\partial E)|_J)$, where 
$\partial E$ forms the normal unit sphere bundle of $S$ in $M$. We set $D_J|_A := \chi(sE|_A)$ for $A \subset J$. 
\eenum
\enot 

The next lemma follows from Proposition~\ref{prop_isot_ext} (Isotopy extension property). 

\blem\label{lem_isot_ext_D} Suppose $r \in \oIZ_{\geq 2}$, $J, K \in {\cal I}(S)$, $K \subset {\rm Int}\, J$, $D_K \Subset D_J$ are tubular neighborhood of $K$ and $J$ in $(M, S)$ respectively 
and $S \cup D_J \Subset U \subset M$. 
If $F \in {\rm Isot}^r(M, S; {\rm supp} \Subset U)_0$ and $F(D_K \times I) \Subset D_J \times I$, then there exists 
a factorization $F = GH$ such that  
\benum
\item[{\rm (1)}] $G \in {\rm Isot}^r(M, S; \supp \Subset D_J)_0$ and  $G = F$ on $D_K \times I$, 
\item[{\rm (2)}] $H := G^{-1}F \in {\rm Isot}^r(M, S; \supp \Subset U, {\rm rel}\,D_K)_0$. 
\eenum 
\elem

\bpf 
Let $(X, \partial_t)$ denote the velocity vector field of the isotopy $F$ on $M \times I$. 
Since $F(S \times I) = S \times I$, the vector field $(X, \partial_t)$ is tangent to $S \times I$. 
Take a $C^\infty$ function $\rho : M \times I \to I$ such that 
$\rho \equiv 0$ on $[\mbox{a nbd of $(M - {\rm Int}\,D_J)$}] \times I$ and $\rho \equiv 1$ on $F(D_K \times I)$. 
Then, the vector field $(\rho X, \partial_t)$ on $M \times I$ is complete and tangent to $S \times I$, 
so that it induces an isotopy $G$ of $(M,S)$, which satisfies the required conditions by the choice of $\rho$. 
\epf 

The next lemma represents a standard technique to decrease the length of factorization into commutators with support in balls.
It enable us to reduce the length 2 in Theorem I\,(2) to the length 1 under some combination of balls.

\begin{lem}\label{lem_intersection} 
Suppose $K, L \in {\cal I}(S)$ and $D_K, D_L$ are tubular neighborhood of $K$ and $L$ in $(M, S)$ respectively. 
If ${\rm Int}\,K \cap {\rm Int}\,L \neq \emptyset$, 
then for any $g \in {{\rm Diff}^r(M, S; \supp \Subset D_K)_0}$ and $h \in {{\rm Diff}^r(M, S; \supp \Subset D_L)_0}$ 
there exists a factorizaiton $gh = \widehat{g}\,\widehat{h}$ for some $\widehat{g} \in {{\rm Diff}^r(M, S; \supp \Subset D_K)_0^{\,c}}$ and 
$\widehat{h} \in {{\rm Diff}^r(M, S; \supp \Subset D_L)_0}$. 
\end{lem}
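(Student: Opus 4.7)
The plan is to express $g$ itself inside ${\rm Diff}^r(M,S;\supp\Subset D_K)_0$ as a single commutator times a factor supported in the overlap $D_K\cap D_L$, and then to absorb this latter factor into $h$. Concretely, I would pick a closed sub-arc $J$ of ${\rm Int}\,K\cap{\rm Int}\,L$ with nonempty interior and a tubular neighborhood $D_J\Subset D_K\cap D_L$; produce a compression $\Phi\in{\rm Diff}^r(M,S;\supp\Subset D_K)_0$ with $\Phi(\supp\,g)\Subset D_J$; set $\widetilde{g}:=\Phi g\Phi^{-1}$; and use the elementary identity
\[
g \;=\; \Phi^{-1}\widetilde{g}\,\Phi \;=\; [\Phi^{-1},\widetilde{g}]\cdot\widetilde{g}.
\]
Since both $\Phi^{-1}$ and $\widetilde{g}$ lie in ${\rm Diff}^r(M,S;\supp\Subset D_K)_0$, the commutator $\widehat{g}:=[\Phi^{-1},\widetilde{g}]$ lies in ${\rm Diff}^r(M,S;\supp\Subset D_K)_0^{\,c}$. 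Meanwhile $\supp\,\widetilde{g}=\Phi(\supp\,g)\Subset D_J$ sits inside the standard ball pair $(D_J,J)$, which shows $\widetilde{g}\in{\rm Diff}^r(M,S;\supp\Subset D_J)_0\subset{\rm Diff}^r(M,S;\supp\Subset D_L)_0$; consequently $\widehat{h}:=\widetilde{g}\,h\in{\rm Diff}^r(M,S;\supp\Subset D_L)_0$ and $gh=\widehat{g}\widehat{h}$ is the required factorization.

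The construction of $\Phi$ would proceed through the product tubular structure $D_K=\chi(sE|_K)$ fixed in Setting~\ref{setting_circle}. First I would choose a $C^r$ isotopy of $K$ that drags the compact set $\supp\,g\cap K\subset{\rm Int}\,K$ into ${\rm Int}\,J$; lifting this via the normal disk bundle $E|_K$ and combining it with a radial contraction of the disk fibres yields an $S$-preserving isotopy of $D_K$ that carries the whole compact set $\supp\,g$ into $\chi(s'E|_J)=D_J$ for some $s'<s$. A bump-function cutoff near $\partial D_K$ converts the associated velocity field into a complete $C^{r-1}$ vector field on $M$ whose integral curves are trivial outside a compact subset of ${\rm Int}\,D_K$ and tangent to $S$ throughout, so by Proposition~\ref{prop_isot_ext} (or by direct integration) its time-one map $\Phi$ belongs to ${\rm Diff}^r(M,S;\supp\Subset D_K)_0$ and satisfies $\Phi(\supp\,g)\Subset D_J$.

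The main obstacle is to arrange the four constraints on $\Phi$ simultaneously --- preservation of $S$, compact support in ${\rm Int}\,D_K$, membership in the identity component, and the displacement $\Phi(\supp\,g)\Subset D_J$ --- while the target region $D_J$ might be considerably thinner than $D_K$. These constraints are compatible precisely because the hypothesis ${\rm Int}\,K\cap{\rm Int}\,L\neq\emptyset$ supplies room both along $S$ (an open sub-arc in which to place $J$) and transverse to $S$ (positive fibre thickness of $D_K\cap D_L$ over this sub-arc) in which to concentrate $\supp\,g$; once this geometric displacement is realized, the remaining content of the lemma is the purely algebraic identity $g=[\Phi^{-1},\widetilde{g}]\widetilde{g}$ and the trivial observation that $\widetilde{g}h$ is supported in $D_L$.
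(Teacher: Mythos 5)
Your algebraic skeleton matches the paper's exactly: writing $\widetilde g=\Phi g\Phi^{-1}$ and using $g=[\Phi^{-1},\widetilde g]\,\widetilde g$ is the same computation as the paper's $gh=[g,\phi]\cdot(\phi g\phi^{-1})\cdot h$ (note $[\Phi^{-1},\widetilde g]=[g,\Phi]$). The construction of the compressing diffeomorphism $\Phi$ is also in the same spirit. However, there is a genuine gap in the step where you claim
$\widetilde g\in{\rm Diff}^r(M,S;\supp\Subset D_J)_0\subset{\rm Diff}^r(M,S;\supp\Subset D_L)_0$.

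Recall that ${\rm Diff}^r(M,S;\supp\Subset D)_0$ is defined as $R\big({\rm Isot}^r(M,S;\supp\Subset D)_0\big)$, so membership requires not merely that the diffeomorphism itself be supported in $D$, but that there be an \emph{isotopy} from $\id$ to it whose whole support is contained in $D$. You have arranged $\Phi(\supp g)\Subset D_J$, so $\supp\widetilde g\Subset D_J$; but the only isotopy you have in hand from $\id$ to $\widetilde g$ is $\Phi G\Phi^{-1}$ (with $G\in{\rm Isot}^r(M,S;\supp\Subset D_K)_0$, $G_1=g$), and its support is $\Phi(\supp G)$, which is controlled only inside $D_K$, not inside $D_L$. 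Knowing that the time-one map $\widetilde g$ happens to be supported in a ball pair does not by itself produce an isotopy supported there --- that is a nontrivial connectedness statement about compactly supported diffeomorphism groups of ball pairs, and it is exactly what the definition of the $_0$-subgroup is designed to sidestep. Consequently, the claim that $\widehat h=\widetilde g\,h$ lies in ${\rm Diff}^r(M,S;\supp\Subset D_L)_0$ is unjustified as written.

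The fix is small but essential, and it is precisely what the paper does: work with the support of the isotopy throughout. Choose $D_J$ so that $\supp G\Subset D_J\Subset D_K$ (not merely $\supp g\Subset D_J$), then pick a tubular neighborhood $D_E$ of an arc $E\subset{\rm Int}\,K\cap{\rm Int}\,L$ with $D_E\Subset D_K\cap D_L$, and take $\phi\in{\rm Diff}^r(M,S;\supp\Subset D_K)_0$ carrying $D_J$ onto $D_E$. Then $\phi G\phi^{-1}$ is an isotopy from $\id$ to $\phi g\phi^{-1}$ with support $\phi(\supp G)\Subset D_E\Subset D_L$, which legitimately places $\phi g\phi^{-1}$ in ${\rm Diff}^r(M,S;\supp\Subset D_L)_0$. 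With that replacement your argument closes correctly and coincides with the paper's.
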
 

\begin{proof} \mbox{} 
Take $G \in {{\rm Isot}^r(M,S; \supp \Subset D_K)_0}$ with $G_1 = g$. 
There exists $J \in {\cal I}(S)$ and a tubular neighborhood $D_J$ of $J$ in $(M, S)$ 
such that $J \subset {\rm Int}\,K$, $D_J \Subset D_K$ and $\supp \,G \Subset D_J$.  
Take $E \in {\cal I}(S)$ and a tubular neighborhood $D_E$ of $E$ in $(M, S)$ 
such that $E \subset {\rm Int}\,K \cap {\rm Int}\,L$, $D_E \Subset D_K$ and $D_E \Subset D_L$. 
We can find an isotopy $\Phi \in {{\rm Isot}^r(M,S; \supp \Subset D_K)_0}$ with $\Phi_1(D_J) = D_E$. 
Let $\phi := \Phi_1 \in {{\rm Diff}^r(M,S; \supp \Subset D_K)_0}$ and 
$\widehat{g} := [g, \phi] \in {\rm Diff}^r(M,S; \supp \Subset D_K)_0^{\,c}$. 
Since $\supp(\phi G \phi^{-1}) = \phi(\supp\,G) \Subset \phi(D_J) = D_E$,  
it follows that \\
\hsp $h':=\phi g \phi^{-1} \in {\rm Diff}^r(M,S; \supp \Subset D_E)_0$, \hsh 
$\widehat{h} := h'h \in {\rm Diff}^r(M,S; \supp \Subset D_L)_0$ \hsh and  \\
\hsp $gh = g(\phi g^{-1} \phi^{-1}) (\phi g \phi^{-1})h = \widehat{g}\,h'h = \widehat{g}\,\widehat{h}$. 
\epf 

The next lemma describes how to factor a short isotopy $F \in {\rm Isot}^r(M,S)_0$ 
into isotopies with support in balls modulo the normal subgroup ${\rm Isot}^r(M, S; {\rm supp} \subset M - S)_0$. 

\blem\label{lem_factorization_S}
Suppose $r \in \oIZ_{\geq 2}$, $V \subset U$ are neighborhoods of $S$ in $M$, \\
\hsp \hsh $f \in {\rm Diff}^r(M,S; \supp \Subset U)_0$, $F \in {\rm Isot}^r(M,S; \supp \Subset U)_{\id, f}$ \ and \ $q \in S$, $F_q(I) \subsetneqq S$. \\
\hsf {\rm [1]} There exists a factorization $F = GHE$ which satisfies the following conditions. 
\benum
\item[{$(1)$}] $G \in {\rm Isot}^r(M,S; {\rm supp} \Subset D_J)_0$ and $G = F$ on $D_K \times I$, where 
\bit 
\itemI $J, K \in {\cal I}(S)$, \ $F_q(I) \subset {\rm Int}\,J$, \ $\lambda(J) > 1/2$, \ $q \in  K \subset {\rm Int}\,J$, \ $pr_M F(K \times I) \subset {\rm Int}\,J$
\itemII $D_J$ and $D_K$ are tubular neighborhoods of $J$ and $K$ in $(M, S)$ respectively, \\
$D_J \Subset V$, $D_K \Subset D_J$ and \ $pr_M F(D_K \times I) \Subset D_J$. 
\eit 
$H' := G^{-1}F \in {\rm Isot}^r(M,S; {\rm supp} \Subset U, {\rm rel}\, D_K)_0$ . 

\item[{$(2)$}] $H \in {\rm Isot}^r(M,S; {\rm supp} \Subset D_{L})_0$ and $H = H'$ on $(C_{L} \cup D_K) \times I$, where 
\bit 
\itemI $L \in {\cal I}(S)$, \ $K' \equiv S - {\rm Int}\,K \subset {\rm Int}\, L$, \ $\lambda(L) > 1/2$, 
\itemII $D_{L} \supset C_{L}$ are tubular neighborhoods of $L$ in $(M, S)$, $D_L \Subset V$, $D_L|_{L \cap K} \subset D_K$,  \\
$C_{L} \subset D_{L} - \partial_+ D_{L}$ and $pr_M H'(C_L \times I) \subset D_{L} - \partial_+ D_{L}$. 
\eit 

\item[{$(3)$}] $E := H^{-1}H' \in {\rm Isot}^r(M,S; {\rm supp} \Subset U, {\rm rel}\,C_{L} \cup D_K)_0 \subset 
{\rm Isot}^r(M,S; {\rm supp} \Subset U -S)_0$. 
\eenum  
\hsf {\rm [2]} There exists a factorization $f = ghe$ which satisfies the following conditions. 
\benum
\item[{$(1)$}] $g := G_1 \in {\rm Diff}^r(M,S; {\rm supp} \Subset D_{J})_0$, where $J \in {\cal I}(S)$, $\lambda(J) > 1/2$ and $D_J \Subset V$.
\item[{$(2)$}] $h := H_1 \in {\rm Diff}^r(M,S; {\rm supp} \Subset D_{L})_0$, where $L \in {\cal I}(S)$, $\lambda(L) > 1/2$ and $D_L \Subset V$.
\item[{$(3)$}] $e := E_1 \in {\rm Diff}^r(M, S; {\rm supp} \Subset U - S)_0$. 
\eenum 
\elem

\bpf {[1]} (1) We can find $J, K, D_J, D_K$ which satisfy the conditions (i) and (ii). 
Then, the isotopy $G$ is obtained by Lemma~\ref{lem_isot_ext_D} (Isotopy extension property). 

(2) We can find $L, D_L, C_L$ which satisfy the conditions (i) and (ii).
The isotopy $H$ is obtained by the argument similar to the proof of Lemma~\ref{lem_isot_ext_D}.
Consider the velocity vector field $(X', \partial_t)$ of the isotopy $H'$ on $M \times I$. 
Note that $X' = 0$ on $D_K \times I$, since $H' = \id$ on $D_K \times I$. 
Take a $C^r$ function $\rho : M \times I \to I$ such that $\rho = 0$ on 
$[\mbox{a nbd of $M - {\rm Int}\,D_L$}] \times I$ and $\rho = 1$ on $H'(C_L|_{K'} \times I)$.
Then, the vector field $(\rho X', \partial_t)$ induces an isotpy $H$ on $M \times I$, which satisifes the required conditions. 
\epf 

\bprop\label{prop_factorization_1} Suppose $r \in \oIZ_{\geq 2}$, $V \subset U$ are neighborhoods of $S$ in $M$ and 
$f \in {\rm Diff}^r(M, S; \supp \Subset U)_0$.
If there exists $F \in {\rm Isot}^r(M, S; \supp \Subset U)_{\id, f}$ with $|\nu_S(F)| < \ell \in \IZ_{\geq 1}$, 
then $f$ has a factorization 
$f = g_{2\ell} \cdots g_{1} h$ which satisfies the following conditions. 
\bit 
\itemI $g_i \in {\rm Diff}^r(M,S; {\rm supp} \Subset D_{K_i})_0$, where $K_i \in {\cal I}(S)$, $\lambda(K_i) > 1/2$ and $D_{K_i} \Subset V$ \ $(i \in [2\ell])$.
\itemII $g_i \in {\rm Diff}^r(M,S; {\rm supp} \Subset D_{K_i})_0^c$ \ $(i = 2, \cdots, 2\ell)$. 
\itemiii $h \in {\rm Diff}^r(M, S; {\rm supp} \Subset U - S)_0$.
\eit 
\eprop 

\bpf \mbox{} 
\benum[(1)] 
\item Let $\sigma := \nu_S(F) = \mu_p(F|_{S \times I})$. 
Then, we can apply Lemma~\ref{lem_isot} to $F|_{S \times I} \in {\rm Isot}^r(S)_{\id, f|_S}$ 
to obtain $G \in {\rm Isot}^r(S)_{\id, f|_S}$ such that $F|_{S \times I} \simeq_\ast G$ in ${\rm Isot}^r(S)_{\id, f|_S}$ and 
$G_p = \delta_\sigma + p$. 
By Corollary~\ref{cor_isot_ext_02} for a compact neighborhood $W$ of $S$ in ${\rm Int}\,U$ 
there exists $F' \in {\rm Isot}^r(M,S)_{\id, f}$ such that $F' = G$ on $S \times I$ and $F' = F$ on $(M - W) \times I$.
Then, $F' \in {\rm Isot}^r(M, S; \supp \Subset U)_{\id, f}$ 
since $\supp\,F' \subset W \cup \supp\,F \Subset U$, and 
$F'_p = G_p = \delta_\sigma + p$ so that $\nu_S(F') = \mu_p(F'|_{S \times I}) = \lambda(F'_p) = \lambda(\delta_\sigma) = \sigma$. 
Therefore, replacing $F$ by $F'$, we may assume that $F_p = \delta_\sigma + p$. 

\item We use the following notations : 
\bit 
\itemI $t_i := i/\ell \in I$, \hsh $f_i := F_{t_i} \in {\rm Diff}^r(M, S; \supp \Subset U)_0$, \hsh $p_i := f_i(p) \in S$ \hsh $(i \in [\ell]_+)$

\itemII $I_i := [t_{i-1},t_i] \subset I$, \hsh $f^{(i)} := f_i f_{i - 1}^{\ -1} \in {\rm Diff}^r(M, S; \supp \Subset U)_0$, \\
$F^{(i)} := (F|_{M \times I_i})\big(f_{i-1}^{\ -1}\times \id_{I_i}) \in {\rm Isot}^r(M, S; \supp \Subset U)_{\id, f^{(i)}}$ \ \ ($I_i$ is identified with $I$) \ \ $(i \in [\ell])$
\eit 
It follows that 
\bit 
\itemiii $f_0 = \id_M$ and $f = f_\ell = f^{(\ell)}f^{(\ell-1)} \cdots f^{(i)} \cdots f^{(1)}$, 
\itemiv $F^{(i)}_{p_{i-1}}(I_i)$ is a closed interval or a single point in $S$ \ \ $(i \in [\ell])$. 
\eit 
(Proof) Since $p_{i-1} = f_{i-1}(p)$, for each $t \in I_i$ we have \\
\hsf $F^{(i)}_{p_{i-1}}(t) = F^{(i)}(p_{i-1}, t) = (F|_{M \times I_i})\big(f_{i-1}^{\ -1}\times \id_{I_i})(p_{i-1}, t) = F(p,t) = F_p(t) = \delta_\sigma(t) + p  
= \pi_S(\sigma t) + p.$ \\
This implies that $T_i := F^{(i)}_{p_{i-1}}(I_i) = \pi_S(\sigma I_i) + p$. 
Since the interval $\sigma I_i$ in $\IR$ has width $|\sigma|/\ell < 1$, 
we have $T_i \in {\cal I}(S)$ (with the end points $p_{i-1}$, $p_i$) if $\sigma \neq 0$ and $T_i = \{ p \}$ if $\sigma = 0$. 

\item By the backward induction on $i \in [\ell]$ we can construct a factorization $f^{(i)} = g^{(i)}h^{(i)}e^{(i)}$ $(i \in [\ell])$ and 
neighborhoods $V = V_\ell \supset V_{\ell-1} \supset \cdots \supset V_1$ of $S$ in $M$ 
which satisfies the following conditions. \\ 
\hsh For $i \in [\ell]$ \hsf  
\btab[t]{@{}l}
(a)$_i$ \ $g^{(i)} \in {\rm Diff}^r(M,S; {\rm supp} \Subset D_{J_i})_0$, where $J_i \in {\cal I}(S)$, $\lambda(J_i) > 1/2$ and $D_{J_i} \subset V_i$. \\[1mm] 
(b)$_i$ \ $h^{(i)} \in {\rm Diff}^r(M,S; {\rm supp} \Subset D_{L_i})_0$, where $L_i \in {\cal I}(S)$, $\lambda(L_i) > 1/2$ and $D_{L_i} \subset V_i$. \\[1mm] 
(c)$_i$ \ $e^{(i)} \in {\rm Diff}^r(M, S; {\rm supp} \Subset U - S)_0$. 
\etab \\[1mm] 
\hsh For $i \in [\ell - 1]$ \hsh 
(d)$_i$ \ $e^{(i+1)} \in {\rm Diff}^r(M, S; {\rm supp} \Subset U - V_i)_0$. \\[1mm]  
(Proof) For the data $V_\ell = V$, $f^{(\ell)}$, $F^{(\ell)}$, $p_{\ell - 1}$,  
Lemma~\ref{lem_factorization_S}\,(2) yields a factorization $f^{(\ell)} = g^{(\ell)}h^{(\ell)}e^{(\ell)}$ 
which satisfies the conditions (a)$_\ell$, (b)$_\ell$ and (c)$_\ell$. 
Suppose $i \in [\ell - 1]$ and assume that we have obtained a factorization $f^{(i+1)} = g^{(i+1)}h^{(i+1)}e^{(i+1)}$ and $V_{i+1}$ which satisfies 
(a)$_{(i+1)}$, (b)$_{(i+1)}$, (c)$_{(i+1)}$ (and (d)$_{(i+1)}$ when $i+1 \in [\ell-1]$). 
Then, $e^{(i+1)} \in {\rm Diff}^r(M, S; {\rm supp} \Subset U - V_i)_0$ for some neighborhood $V_i$ of $S$ with $V_i \subset V_{i+1}$. 
For the data $V_i$, $f^{(i)}$, $F^{(i)}$, $p_i$,  
Lemma~\ref{lem_factorization_S}\,(2) yields a factorization $f^{(i)} = g^{(i)}h^{(i)}e^{(i)}$ 
which satisfies the conditions (a)$_i$, (b)$_i$ and (c)$_i$. 

\item For each $i \in [\ell]$, $i \geq 2$, note that $e^{(i)}$ commutes $g^{(j)}, h^{(j)}$ $(j \in [i-1])$, since 
$\supp \,e^{(i)} \subset M - V_{i-1}$ and $\supp\,g^{(j)}, \supp\,h^{(j)} \subset V_j \subset V_{i-1}$. 
Hence, we have the following factorization of $f$ : \\[1mm] 
\hsp 
$\bary[t]{@{}l@{ \ }l}
f & = f^{(\ell)}f^{(\ell-1)} \cdots f^{(1)} = (g^{(\ell)} h^{(\ell)} e^{(\ell)}) (g^{(\ell-1)} h^{(\ell-1)} e^{(\ell-1)})\cdots (g^{(1)} h^{(1)} e^{(1)}) \\[2mm] 
& = \big((g^{(\ell)} h^{(\ell)}) (g^{(\ell-1)} h^{(\ell-1)})\cdots (g^{(1)} h^{(1)})\big) \circ \big(e^{(\ell)} e^{(\ell-1)} \cdots e^{(1)}\big) \ = \ g_{2\ell}' \cdots g_{1}' h, 
\eary$ \\[2mm]
where $g_{2i}' := g^{(i)}$, \,$g_{2i-1}' := h^{(i)}$ \ $(i \in [\ell])$ and \ $h := e^{(\ell)} e^{(\ell-1)} \cdots e^{(1)}$. 
It follows that 
\bit 
\itemI $g_i' \in {\rm Diff}^r(M,S; {\rm supp} \Subset D_{K_i})_0$, where $K_i \in {\cal I}(S)$, $\lambda(K_i) > 1/2$ and $D_{K_i} \subset V$.
\itemII $h \in {\rm Diff}^r(M, S; {\rm supp} \subset U - S)_0$
\eit 
\item We can apply Lemma~\ref{lem_intersection}\,(2) to $g_{2\ell}', \cdots, g_2', g_{1}'$ inductively (backward from $2\ell$ to 2) to obtain a factorization \hsf 
$f = g_{2\ell} \cdots g_2 g_{1} h$, where \\
\hsp $g_i \in {\rm Diff}^r(M,S; {\rm supp} \Subset D_{K_i})_0^c$ \ $(i = 2, \cdots, 2\ell)$ \ and \ 
$g_1 \in {\rm Diff}^r(M,S; {\rm supp} \Subset D_{K_1})_0$. 
\eenum
\vspace*{-9.5mm}
\epf 

\subsection{Factorization of isotopies along a union of circles} \mbox{} 

The result in the previous subsection extends directly to the case of a finite union of circles.     

\bprop\label{prop_factorization_L} Suppose $(M, L) \in {\cal C}^r(n,m)$ $(r \in \oIZ_{\geq 2}, n \geq 2, m \geq 1)$, 
$V \subset U$ are neighborhoods of $L$ in $M$ and $f \in {\rm Diff}^r(M, L; \supp \Subset U)_0$. 
If there exists $F \in {\rm Isot}^r(M, L; \supp \Subset U)_{\id, f}$ with $\|\nu(F)\| < \ell \in \IZ_{\geq 1}$, 
then $f$ has a factorization $f = g_{2\ell} \cdots g_{1} h$ which satisfies the following conditions. 
\bit 
\itemI $g_j \in {\rm Diff}^r(M,L; {\rm supp} \Subset D_j)_0$, where $D_j \in {\cal F}{\cal B}^r(M, L)_1^\ast$ and $D_j \Subset V$ \ $(j \in [2\ell])$ 
\itemII $g_j \in {\rm Diff}^r(M,L; {\rm supp} \Subset D_j)_0^c$ \ $(j = 2, \cdots, 2\ell)$. 
\itemiii $h \in {\rm Diff}^r(M, L; {\rm supp} \Subset U - L)_0$.
\eit 
\eprop 

\bpfb 
\benum 
\item Let $L = \bigcup_{i \in [m]} S_i$. For each $i \in [m]$ 
take compact neighborhoods $W_i \Subset V_i \Subset U_i \Subset V$ of $S_i$ in $M$   
such that $U_i$ $(i \in [m])$ are pairwise disjoint and $F(W_i \times I) \Subset V_i$ $(i \in [m])$. 
Let $W := \bigcup_{i \in [m]}W_i$, $V' := \bigcup_{i \in [m]}V_i$ and $U' := \bigcup_{i \in [m]}U_i$. 
By Proposition~\ref{prop_isot_ext}\,(1) there exists \\
\hsp $F' \in {\rm Isot}^r_c(M, L; {\rm rel}\,M - V')_0$ with $F' = F$ on $W \times I$ and $\supp\,F' \subset \supp\,F$. \\ 
It follows that \ $H' := (F')^{-1}F \in {\rm Isot}^r(M; {\rm rel}\,W, \supp \Subset U)_0 \subset {\rm Isot}^r(M, L; \supp \Subset U - L)_0$ \ and \\
$h' := H'_1 \in {\rm Diff}^r(M, L; \supp \Subset U - L)_0$. 

\item For each $i \in [m]$ define $F^{(i)} \in {\rm Isot}^r(M, S_i; {\rm rel}\,M - V_i)_0$ by $F^{(i)} = F'$ on $V_i \times I$. 
Then, $\supp\,F^{(i)} \subset V_i$ so that 
$F^{(i)} \in {\rm Isot}^r(M, S_i; \supp \Subset U_i)_0$ and $f^{(i)} := F^{(i)}_1 \in {\rm Diff}^r(M, S_i; \supp \Subset U_i)_0$. 
Since $F^{(i)} = F' = F$ on $S_i \times I$ we have $|\nu_i(F^{(i)})| = |\nu_i(F)| \leq \| \nu(F)\| < \ell$. 
Hence, for the data $V_i \subset U_i$, $f^{(i)}$ and $F^{(i)}$ we obtain  
a factorization $f^{(i)} = g_{2\ell}^{(i)} \cdots g_{1}^{(i)} h^{(i)}$ which satisfies the following conditions. 
\vskip 1mm 
\bit 
\item[(i)$'$\ ] $g^{(i)}_j \in {\rm Diff}^r(M,S_i; {\rm supp} \Subset D_{K_j^{(i)}})_0$, 
where $K_j^{(i)} \in {\cal I}(S_i)$ and $D_{K_j^{(i)}} \Subset V_i$ \ $(j \in [2\ell])$.
\item[(ii)$'$\,] $g^{(i)}_j \in {\rm Diff}^r(M,S_i; {\rm supp} \Subset D_{K_j^{(i)}})_0^c$ \ $(j = 2, \cdots, 2\ell)$. 
\item[(iii)$'$] $h^{(i)} \in {\rm Diff}^r(M, S_i; {\rm supp} \Subset U_i - S_i)_0$.
\eit 

\item For each $j \in [2\ell]$ consider the composition $g_j := g^{(1)}_j 
\cdots g^{(m)}_j \in {\rm Diff}^r(M, L; \supp \Subset D_j)_0$.
Here, $D_j \in {\cal F}{\cal B}^r(M, L)_1^\ast$ is obtained from $\bigcup_{i \in [m]} D_{K^{(i)}_j}$ by slightly enlarging and rounding its corners in ${\rm Int}\,V'$. 
Note that $g_j = g^{(i)}_j$ on $U_i$ $(i \in [m])$, since $\supp\,g^{(i)}_j \subset U_i$.  $(i \in [m])$, it follows that $g_j = g^{(i)}_j$ on $U_i$ $(i \in [m])$. 
Similarly we have the composition $h'' := h^{(1)} \cdots h^{(m)} \in {\rm Diff}^r(M, L; \supp \Subset U - L)_0$, 
which satisfies $h'' = h^{(i)}$ on $U_i$ $(i \in [m])$. Let $h := h''h' \in {\rm Diff}^r(M, L; \supp \Subset U - L)_0$.
 It is seen that $f' = g_{2\ell} \cdots g_1h''$ 
since $f' = f^{(i)}$, $g_j = g_j^{(i)}$, $h'' = h^{(i)}$ on $U_i$ and $f ' = g_j = h'' = \id$ on $M - U'$ $(i \in [m], j \in [2\ell])$.  
Hence, $f = f'h' = g_{2\ell} \cdots g_1h$. 
\item It remains to show that $g_j \in {\rm Diff}^r(M,L; {\rm supp} \Subset D_j)_0^c$ \ $(j = 2, \cdots, 2\ell)$. 
From (2)(ii)$'$ it follows that $g^{(i)}_j = [a^{(i)}_j, b^{(i)}_j]$ for some $a^{(i)}_j, b^{(i)}_j \in {\rm Diff}^r(M,S_i; {\rm supp} \Subset D_{K_j^{(i)}})_0$. 
Consider the compositions $a_j := a^{(1)}_j \cdots a^{(m)}_j, b_j := b^{(1)}_j \cdots b^{(m)}_j \in {\rm Diff}^r(M, L; \supp \Subset D_j)_0$. 
Then, $a_j = a^{(i)}_j$, $b_j = b^{(i)}_j$ on $U_i$ $(i \in [m])$ and $a_j = b_j = \id$ on $M - U'$, so that \\
\hsp $g_j = g^{(i)}_j = [a^{(i)}_j, b^{(i)}_j] = [a_j,b_j]$ \ on $U_i$ \ \ and \ \ $g_j = \id = [a_j,b_j]$ \ on $M - U'$. \\
Hence, $g_j = [a_j,b_j] \in {\rm Diff}^r(M,L; {\rm supp} \Subset D_j)_0^c$ as required. 
\eenum 
\vskip -7mm
\epf 

Recall Assumption $P(r, n,\ell)$ in \S4.1. From Fact~\ref{fact_ball}\,(2) we have the following conclusion. 

\bthm\label{thm_clb_G} 
Suppose $(M, L) \in {\cal C}^r(n,m)$ $(r \in \oIZ_{\geq 2}$, $n \geq 2, m \geq 1)$ and $f \in {\rm Diff}^r_{(c)}(M, L)_0$. 
Under Assumption $P(r, n,1)$, the folllowing holds. 
\benum 
\item[{\rm (1)}] If there exists $F \in {\rm Isot}^r_{(c)}(M, L)_{\id, f}$ with $\|\nu(F)\| < \ell \in \IZ_{\geq 1}$, then $clb_{/{\cal G}_{(c)}} f \leq 2\ell + 1$. 
\vskip 1mm 
\item[{\rm (2)}] If $\min \| \widehat{\nu}_{(c)}(f) \| < \ell \in \IZ_{\geq 1}$, then $cl_{/\cal G_{(c)}}f \leq clb_{/\cal G_{(c)}}f  \leq 2\ell + 1$. 
Therefore, \\[1mm] 
\hspp \hsh $cl\,f \leq 2\ell + 1 + cld\, {\cal G}_{(c)}$ \ and \ $clb\,f \leq 2\ell + 1 + clbd\, {\cal G}_{(c)}$. 
\vskip 1mm 

\item[{\rm (3)}] In the case $\ell_c = m$, let $\widehat{k}{}^c := 2\lfloor k^c/2\rfloor + 3$ {\rm (cf. Proposition~\ref{prop_nu_diagram}\,(5))}. Then, 
\bit 
\itemI $cl_{/\cal G_c}d\, {\rm Diff}^r_c(M,L)_0 \leq clb_{/\cal G_c}d\, {\rm Diff}^r_c(M,L)_0 \leq \widehat{k}{}^c$, 
\vskip 0.5mm 
\itemII $cld\, {\rm Diff}^r_c(M,L)_0 \leq \widehat{k}{}^c + cld\, {\cal G}_c$ \ and \ $clbd\, {\rm Diff}^r_c(M,L)_0  \leq \widehat{k}{}^c + clbd\, {\cal G}_c$. 
\eit 
\eenum 
\vspace*{-4mm} 
\ethm

\bpfb 
\benum 
\item Take $U = V$ as $U := M$ ($U :=$ a compact neighborhood of $L \cup \supp\,F$ in $M$). 
Then, we have a factorization $f = g_{2\ell} \cdots g_1 h$ as in Proposition~\ref{prop_factorization_L}. 
Since $h \in {\cal G}_{(c)}$, under Assumption $(\ast)$ \vspace*{1mm} it follows that 
\hsh $clb_{/{\cal G}_{(c)}} f \leq clb\,g_{2\ell} \cdots g_2g_1 \leq clb\,g_{2\ell} \cdots g_2 + clb\,g_1 \leq 2\ell - 1 + 2 = 2\ell + 1$. 
\vskip 1mm
\item By the definition there exists $F \in {\rm Isot}^r_{(c)}(M, L)_{\id, f}$ with $\| \nu(F) \| = \min \| \widehat{\nu}_{(c)}(f) \|$. 
\item For any $f \in {\rm Diff}^r_c(M,L)_0$, Proposition~\ref{prop_nu_diagram}\,(5) implies that \\
\hsp $\min \| \widehat{\nu}_c(f) \| \leq k^c/2 < \lfloor k^c/2\rfloor + 1 \in \IZ_{\geq 1}$. \hsh 
Hence, by (2) we have \\[1mm] 
\hsf $cl_{/\cal G_c}(f) \leq clb_{/\cal G_c}(f) \leq 2(\lfloor k^c/2\rfloor + 1) + 1 = \widehat{k}{}^c$ \ \ and \ \ 
$cl\,f \leq \widehat{k}{}^c + cld\, {\cal G}_c$, \ $clb\,f \leq \widehat{k}{}^c + clbd\, {\cal G}_c$. 
\eenum 
\vspace*{-7mm} 
\epf 

\noindent Note that $\widehat{k}{}^c = k^c + 3$ if $k^c$ is even and $\widehat{k}{}^c = k^c + 2$ if $k^c$ is odd. 

It remains to estimate $cld\,{\cal G}_c$ and $clbd\,{\cal G}_c$. Here, we can apply the following results in \cite{FRY}, since \\
\hsp \hsh ${\rm Diff}^r_c(M, L)_0 \vartriangleright {\cal G}_c := {\rm Diff}^r_c(M; \, {\rm rel}\  {\cal U}(L))_0 = 
{\rm Diff}^r_c(M, L; {\rm supp} \subset M - L)_0 \cong {\rm Diff}^r_c(M - L)_0$.

\begin{thm_FRY}\label{thm_III} $($\cite{FRY}$)$ \mbox{} Suppose $r \in \oIZ_{\geq 1}$ and $r \neq n+1$. 
\benum 
\item[{\rm (1)}] If $M$ is an open $n$-manifold and $n = 2i+1$ $(i \geq 0)$, then \\
\hsp $cld\,{\rm Diff}^r_c(M)_0 \leq 4$, $cld\,{\rm Diff}^r(M)_0 \leq 8$ \ and \ $clbd\,{\rm Diff}^r_c(M)_0 \leq 2n+4$. 
\item[{\rm (2)}] If $M$ is the interior of a compact $n$-manifold with nonempty boundary and $n = 2i$ $(i \geq 3)$, then \\
\hsp $cld\,{\rm Diff}^r_c(M)_0 < \infty$ \ and \ $clbd\,{\rm Diff}^r_c(M)_0 < \infty$. \\
Some upper bounds of these diameters are obtained in terms of handle decomposition or triangulation of $M$. 
\eenum 
\end{thm_FRY}  

\bcor\label{cor_clb_G} 
Suppose $(M, L) \in {\cal C}^r(n,m)$ $(r \in \oIZ_{\geq 2}$, $n \geq 2, m \geq 1)$. 
Under Assumption $P(r, n,1)$, the folllowing holds. 
\benum 
\item[{\rm (1)}] Suppose $M$ is an $n$-manifold without boundary with $n = 2i+1$ $(i \geq 1)$. 
\bit 
\itemI If $f \in {\rm Diff}^r_c(M, L)_0$ and $\min \| \widehat{\nu}_c(f) \| < \ell \in \IZ_{\geq 1}$, then $cl\,f \leq 2\ell + 5$ \ and \ $clb\,f \leq 2\ell + 2n + 5$. 
\itemII In the case $\ell_c = m$, \ \ 
$cld\, {\rm Diff}^r_c(M,L)_0 \leq \widehat{k}{}^c + 4$ \ and \ $clbd\, {\rm Diff}^r_c(M,L)_0  \leq \widehat{k}{}^c + 2n+4$. 
\eit 

\vskip 1mm 
\item[{\rm (2)}] Suppose $M$ is the interior of a compact $n$-manifold possibly with boundary with $n = 2i$ $(i \geq 3)$. \\
In the case $\ell_c = m$, \ \ $cld\, {\rm Diff}^r_c(M,L)_0 < \infty$ \ and \ $clbd\, {\rm Diff}^r_c(M,L)_0  < \infty$. 

\item[{\rm (3)}] In the cases {\rm (1)(ii)} and {\rm (2)}, if $M$ is connected, then 
the group ${\rm Diff}^r_c(M,L)_0$ is uniformly weakly simple relative to the subset ${\cal K \cal P}(M, L)$ and  
it is bounded. 
\eenum 
\vspace*{-2mm} 
\ecor

\bpfb The group ${\cal G}_c$ is isomorphic to ${\rm Diff}^r_c(M - L)_0$ under the restriction map. 
Hence, the statements (1) and (2) follow from Theorem~\ref{thm_clb_G} and the following observations. 
\benum 
\item Since $M - L$ is an open $n$-manifoldwith $n = 2i+1$ $(i \geq 0)$, \\
\hsp $cld\,{\rm Diff}^r_c(M - L)_0 \leq 4$ \ and \ $clbd\,{\rm Diff}^r_c(M-L)_0 \leq 2n+4$. 

\item Since $M - L$ is the interior of a compact $n$-manifold with nonempty boundary with $n = 2i$ $(i \geq 3)$, \\
\hsp $cld\,{\rm Diff}^r_c(M- L)_0 < \infty$ \ and \ $clbd\,{\rm Diff}^r_c(M-L)_0 < \infty$.

\item The assertion follows from Proposition~\ref{prop_zeta_leq_4clb_pi}\,(2)(ii) and $clbd\, {\rm Diff}^r_c(M,L)_0  < \infty$. 
\eenum
\vspace*{-7mm} 
\epf 

\section{Examples} 

In this section we determine the lattice $A < \IZ^m$ for some examples $(M, L) \in {\cal C}^r(n, m)$, $L = \bigcup_{i \in [m]} S_i$ $(r \in \oIZ_{\geq 0}$, $n \geq 2, m \geq 1)$. 

\subsection{Circle actions} \mbox{}

\blem\label{lem_S^1-action} 
Suppose $(M, L) \in {\cal C}^r(n,m)$ admits a $C^r$ $\IS^1$ action $\rho : \IS^1 \times M \to M$ such that 
each circle $L_i$ is an orbit of this action. 
For each $i \in [m]$ suppose the orbit map $\rho_{p_i} : \IS^1 \to S_i$, $\rho_{p_i}(z) = z \cdot p_i$, has the degree $\ell_i$. 
Then, $\ell = (\ell_1, \cdots, \ell_m) \in A$. In the case $m = 1$, we have $\ell \in A= k\IZ$ and $k|\ell$.
\elem

\bpfb The $S^1$ action $\rho$ induces an isotopy $F \in {\rm Isot}^r(M, L)_{\id, \id}$ : $F(x,t) = e^{2\pi it} \cdot x$. 
It follows that $\nu_i(F) = {\rm deg}_{S_i}F_{p_i} = {\rm deg}_{S_i} \rho_{p_i} = \ell_i$ $(i \in [m])$, so that 
$A \ni \nu(F) = (\nu_i(F))_{i \in [m]} = \ell$. 
\epf 

\bexp\label{exp_Seifert} \mbox{} 
\benum 
\item In the case where $(M, K) \in {\cal C}^r(3,1)$ and $M$ is a Seifert fibered 3-manifold, 
if $K$ a regular fiber, then $k = 1$ and if $K$ is a $(p, q)$ multiple fiber, then $k|p$. 
\item In particular, if $K$ is a torus knot in $\IS^3$, then $k = 1$, 
since $K$ is a regular fiber of a standard Seifert fibering of $\IS^3$ with two singular fibers.  
\eenum 
\eexp 

\subsection{Examples of $(M, K) \in {\cal C}(n, 1)$ with $k = 0$} \mbox{} 

\blem\label{lem_center} 
Suppose $X$ is a topological space, $F \in {\rm Homot}^0(X)_{\id, \id}$, $p \in X$ and $F_p := F(p, \ast) \in \Omega(X, p)$.
Then, $[F_p] \in Z\pi_1(X, p)$. 
\vskip 2mm 
\bit 
\item[\bec] For any $[\alpha] \in \pi_1 (X, p)$ we have a homotopy $\eta : I^2 \to X$, $\eta(s,t) := F(\alpha(s), t)$. It follows that 
\hsp \btab[t]{ll}
$\eta(0,t) = F(\alpha(0), t) = F(p,t)$ \hsh & $\eta(s,0) = F(\alpha(s), 0) = \alpha(s)$ \\[2mm] 
$\eta(1,t) = F(\alpha(1), t) = F(p,t)$ & $\eta(s,1) = F(\alpha(s), 1) = \alpha(s)$.
\etab \hspace{15mm} 
\smash{$\xymatrix@M+1pt{
p \ar[r]^-{\alpha} & p \\
p \ar[u]^-{F_p}_-{t} \ar[r]_-{\alpha}^-{s} & p \ar[u]_-{F_p}
}$} \\[2mm] 
This means that $F_p \ast \alpha \simeq_\ast \alpha \ast F_p$ and $[F_p] [\alpha] = [\alpha][F_p]$ as reqired. 
\eit 
\vskip 2mm 
\elem

The symbol $i_\ast$ denotes the inclusion induced homomorphism. 

\bprop\label{prop_center} For $(M, K) \in {\cal C}(n, 1)$, we have $A = \{ 0 \} \subset \IZ$ and $k = 0$ in the following cases : 
\benum 
\item
[$(\sharp)$] {\rm (i)} $i_\ast : \pi_1(K) \to \pi_1 (M)$ is injective \ and \ {\rm (ii)} $i_\ast \pi_1 (K) \cap Z(\pi_1 (M)) = \{ 1 \}$. 
\item[$(\flat)$] 
$n = 3$ and for some/any tubular neighborhood $D$ of $K$  in $M$ \\
{\rm (i)} $i_\ast : \pi_1(D - K) \to \pi_1 (M - K)$ is injective \ and \ 
{\rm (ii)} $i_\ast \pi_1(D- K) \cap Z(\pi_1 (M - K)) = \{ 1 \}$. 
\eenum  
\eprop 

\bpfb In both cases we have to show that for any $F \in {\rm Isot}(M, K)_{\id, \id}$, 
$[F_p] = 0 \in \pi_1 (K, p)$ so that $F_p \simeq_\ast \ast$ in $K$ and $\nu(F) = {\rm deg}_K F_p = 0$. 

\benum 
\item[$(\sharp)$]  From Lemma~\ref{lem_center} and the condition (ii) it follows that $i_\ast [F_p] = [i F_p] \in i_\ast \pi_1 (K, p) \cap Z(\pi_1 (M, p)) = \{ 1 \}$.  
Hence $[F_p] = 0 \in \pi_1 (K, p)$ by the condition (i). 

\item[$(\flat)$] 
Take a point $q \in D - K$ close to $p$, so that the loop $F_q \in \Omega(D - K, q)$ and $F_q \simeq_\ast F_p$ in $D$. 
Consider $[F_q] \in \pi_1(D - K, q)$. 
Since $F|_{(M-K) \times I} \in {\rm Isot}^r(M - K)_{\id, \id}$, from Lemma~\ref{lem_center} and the condition (ii) it follows that  
$i_\ast [F_q] = [iF_q] \in i_\ast \pi_1(D - K) \cap Z\pi_1 (M - K, q) = \{ 1 \}$. 
Then, the condition (i) implies that $[F_q] = 1 \in \pi_1 (D - K, q)$ and $F_q \simeq_\ast \ast$ in $D - K$. 
Hence, we have $F_p \simeq_\ast F_q \simeq_\ast \ast$ in $D$ and $F_p \simeq \ast$ in $K$ since there exists a retraction $\zeta : D \to K$. 
\eenum 
\vspace*{-6.5mm} 
\epf 

%\bexp\label{exp_torus} \mbox{}
%\benum 
%\item If $K$ is a non-torus knot in $\IS^3$, the $(\IS^3, K)$ satisfies  the condition ($\flat$) (\cite{BZ}). 
%Note that the condition ($\flat$) for $n \geq 4$ reduces to the condition ($\sharp$) by the general position argument. 
%\item For $n \geq 4$ an example $(M, K) \in {\cal C}^r(n, 1)$ which satisfies the condition ($\sharp$) is obtained as follows. 
%Take any finitely presented group $G$ which includes an element $a \in G$ such that ${\rm ord}\,a = \infty$ and $\langle a \rangle \cap Z(G) = \{ 1 \}$ 
%(for example, $G = \IZ \ast H$, $H$ is a nontrivial group).  
%For $n \geq 4$, there exists a closed connected $C^\infty$ $n$-manifold $M$ with $\pi_1(M) \cong G$. 
%Take a circle $K$ in $M$ which represents the element $a$ in $\pi_1(M)$, so that the inclusion $i : K \subset M$ induces an isomorphism 
%$i_\ast : \pi_1(K) \cong \langle a \rangle < \pi_1(M)$. 
%\eenum 
%\eexp 

\subsection{Parallel circles} \mbox{} 

\blem\label{lem_parallel} Suppose $(M, L) \in {\cal C}^r(n,2)$ satisfies the following data and conditions.  
\bit 
\itemI $L \subset N \subset M$. $i_1 := i_{L_1, N}$, \ $i_2 := i_{L_2, N}$. $i_1{}_\ast : \pi_1 (L_1, p_1) \to \pi_1 (N, p_1)$ is injective. 
\itemII $f : (L_1,p_1) \to (L_2, p_2)$ is a $C^0$ map. $\ell := \deg\,f$ \\ 
$H : L_1 \times I \to N$ : $i_1 \simeq i_2 f$ is a $C^0$ homotopy. 
$\alpha := H_{p_1} : I \to N$.  
 \eit 
If $F \in {\rm Isot}^r(M, L)_{\id, \id}$ and $F(\alpha(I) \times I) \subset N$, then $\nu_2(F) = \ell\, \nu_1(F)$. 
\elem

\bpfb
\benum 
\item Since $\alpha \in {\cal P}^0(N)_{p_1, p_2}$ and $H : i_1 \simeq i_2f$ in $N$ with $H_{p_1}  = \alpha$, 
we have the commutative diagram : \\[2mm]  
\hsp $\xymatrix@M+1pt{
\pi_1(L_1, p_1) \ar@{^(->}[d]_-{f_\ast} \ar@{^(->}[r]^-{{i_1}_\ast} & \pi_1(N, p_1) \\
\pi_1(L_2, p_2) \ar@{^(->}[r]_-{{i_2}_\ast} & \pi_1(N, p_2) \ar[u]_-{\alpha_\ast}^-{\cong}
}$
\hsp 
\btab[t]{l} 
$\alpha_\ast([\gamma]) = [\alpha \ast \gamma \ast \alpha^{-1}]$, \\[2mm]
$\alpha^{-1} \in {\cal P}^0(N)_{p_2, p_1}$, $\alpha^{-1}(t) = \alpha(1-t)$ $(t \in I)$. 
\etab \\[2mm] 
By the condition (i) $i_1{}_\ast$ is injective. Then, $f_\ast$ and $i_2{}_\ast$ are also injective.  
In fact, a group homomorphism $\phi : \IZ \to \Gamma$ is injective if and only if $|{\rm Im}\,\phi| = \infty$. 
The injectivity of $i_2{}_\ast f_\ast$ implies $|{\rm Im}\,i_2{}_\ast f_\ast | = \infty$. 
Since ${\rm Im}\,(i_2{}_\ast f_\ast) \subset {\rm Im}\,i_2{}_\ast$, it follows that $|{\rm Im}\,i_2{}_\ast | = \infty$
and $i_2{}_\ast$ is also injective. 

\item Given any $F \in {\rm Isot}^r(M, L)_{\id, \id}$ with $F(\alpha(I) \times I) \subset N$. 
Consider a $C^0$ homotopy \\
\hsp $G : I^2 \to N$ : $G(s,t) := F(\alpha(t), s) \in F(\alpha(I) \times I) \subset N$. \\
It follows that $G : i_1F_{p_1} \simeq i_2 F_{p_2}$ in $N$ and $G(0,t) = \alpha(t) = G(1,t)$ \ $(t \in I)$. 
This implies that $i_1 F_{p_1} \simeq_\ast \alpha \ast (i_2 F_{p_2}) \ast \alpha^{-1}$ in $N$, so that \\
\hsp $\alpha_\ast i_2{}_\ast f_\ast[F_{p_1}] =  i_1{}_\ast [F_{p_1}] = [i_1 F_{p_1}] 
= \alpha_\ast [i_2 F_{p_2}] = \alpha_\ast i_2{}_\ast [F_{p_2}]$ \ \ and \ \ $f_\ast[F_{p_1}] = [F_{p_2}]$. \\
Therefore, $\nu_2(F) = {\rm deg}_{L_2}F_{p_2} = {\rm deg}_{L_2} fF_{p_1} = ({\rm deg}\, f)({\rm deg}_{L_1}F_{p_1}) = \ell \,\nu_1(F)$. 
\eenum 
\vspace*{-7mm} 
\epf 

\bnotb Suppose $(M, L = S_1 \cup S_2) \in {\cal C}^r(n,2)$.
We say that two circles $S_1$ and $S_2$ are 
(i) $C^0$ parallel if there is a $C^0$ embedded annulus $E$ in $M$ such that $\partial E = S_1 \cup S_2$ and 
(ii) $\pm$ $C^0$ parallel if there is an oriented $C^0$ embedded annulus $E$ in $M$ such that $\partial E = S_1 \cup (\mp S_2)$ with respect to the induced orientation on $\partial E$.
In this case we say that $S_1$ and $S_2$ are parallel with respect to the annulus $E$. 
\enot 

\blemb\label{lem_annulus}
Suppose $(M, L) \in {\cal C}^r(n,2)$, $S_1$ and $S_2$ are $\pm$ parallel with respect to an oriented annulus $E$ in $M$, 
$E \subset N \subset M$ and $i_1{}_\ast : \pi_1 (S_1, p_1) \to \pi_1 (N, p_1)$ is injective. 
If $F \in {\rm Isot}^r(M, L)_{\id, \id}$ and $F(E \times I) \subset N$, then $\nu_1(F) = \pm \nu_2(F)$. 
\elem

\bpfb The annulus $E$ admits a homeomorphism $H : S_1 \times I \approx E$ 
such that $H_0 = \id_{L_1}$ and $H_1 : S_1 \approx S_2$ is an orientation-preserving/reversing homeomorphism. 
We can modify $H$ so that $H_1(p_1) = p_2$. 
Then, $f := H_1$ and $H : S_1 \times I \approx E \subset N$ satisfies the condition (ii) in Lemma~\ref{lem_parallel}. 
In this case $\deg \,f = \pm 1$ and $\alpha(I) \subset E$ so that 
$F(\alpha(I) \times I) \subset F(E \times I)$ for any $F \in {\rm Isot}^r(M, L)_{\id, \id}$. 
Hence, the assertion follows from Lemma~\ref{lem_parallel}.
\epf 

\bexpb\label{exp_Hopf} (Hopf link) \ 
Consider the Hopf fibration $\pi : \IS^3 \to \IS^2$. 
It is a principal $\IS^1$ bundle and this $\IS^1$ action $\rho$ induces a canonical orientation on each fiber. 
Let $m \in \IZ_{\geq 1}$ and consider a link $L^{(m)} := \bigcup_{i \in [m]} S_i \subset \IS^3$ 
consisting of $m$ distinct circle fibers $S_i := \pi^{-1}(q_i)$ $(i \in [m])$. 
Since the subset $\{ q_1, \cdots, q_m \} \subset \IS^2$ is unique up to an isotopy on $\IS^2$, 
the link $L^{(m)} \subset \IS^3$ is uniquely determined up to a bundle isotopy on $\pi$. 
Note that $L^{(1)}$ is a unknot and $L^{(2)}$ is the Hopf link. 

If $C$ is an arc in $\IS^2$ connecting two points $q$ and $q'$, then the restriction $\pi : \pi^{-1}(C) \to C$ is a trivial principal $\IS^1$ bundle, so that  
$E = \pi^{-1}(C)$ is an annulus and the fiber circles $\pi^{-1}(p)$ and $\pi^{-1}(p')$ are parallel with respect to $E$. 
For $m \geq 2$, any circles $L_i$ and $L_j$ in the link $L^{(m)}$ $(i \neq j)$ are parallel with respect to 
an annulus $E_{ij} := \pi^{-1}(C_{ij})$, where $C_{ij}$ is an arc in $\IS^2 - \{q_1, \cdots, \widehat{q}_i, \cdots, \widehat{q}_j, \cdots, q_m\}$ connecting $q_i$ and $q_j$. 

For $(\IS^3, L^{(m)}) \in {\cal C}^r(3,m)$ the lattice $A = A_{(\IS^3, L^{(m)})} < \IZ^m$ takes the following form. \\
\raisebox{3mm}{ 
\hspp $\bary[t]{|c|c|c|c|} \hline 
m & 1 & 2 & \geq 3 \makebox(0,12){} \\[0.5mm] \hline 
A & \IZ & \IZ^2 & \IZ(1, \cdots, 1) \makebox(0,13){} \\[0.5mm] \hline
{\rm rank}\,A & 1 & 2 & 1 \makebox(0,12){} \\[0.5mm] \hline 
\eary$} \\[2mm] 
Hence, the group ${\rm Diff}^\infty(\IS^3, L^{(m)})_0$ 
is bounded for $m = 1,2$ and unbounded for $m \geq 3$. 
The case $m = 2$ shows that some kind of injectivity condition is necessary in Lemma~\ref{lem_parallel} and Lemma~\ref{lem_annulus}. 
\eexp 

\bpfb 
\benum[(1)]
\item $m=1$ : The $\IS^1$ action $\rho$ on $\IS^3$ induces the orbit map $\rho_{p_1} : \IS^1 \approx S_1$ of degree 1. 
Hence, $1 \in A$ and $A = \IZ$ by Lemma~\ref{lem_S^1-action}. 

\item $m=2$ : The pair $(\IS^3, L^{(2)})$ has the following representation. 
Consider a solid torus $T$ with a core $K$, a longitude $\ell$ and a meridian $m$.
Then, $(\IS^3, L^{(2)}= S_1 \cup S_2)$ is obtained by pasting two copies $(T_i, L_i,  \ell_i, m_i)$ $(i = 0,1)$ of $(T, K, \ell, m)$ 
by a diffeomorphism $h : (\partial T_1, \ell_1, m_1) \approx (\partial T_2, m_2, \ell_2)$. 
It follows that $A \ni \nu(F) = (1,0)$ if 
$F \in {\rm Isot}^\infty(\IS^3, L^{(2)})_{\id,\id}$ rotates $\IS^3$ around $S_2$ by $2\pi$ so that $S_2$ is fixed pointwise, 
while $S_1$ rotates once along itself.   
Similarly, $(0,1) \in A$. Hence, $A = \IZ \oplus \IZ$. 

\item $m \geq 3$ : 

The $C^\infty$ $\IS^1$ action $\rho$ on $\IS^3$ induces the orbit map $\rho_{p_i} : \IS^1 \approx S_i$ of degree 1 for each $i \in [m]$. 
Hence, $(1, \cdots, 1) \in A$ by Lemma~\ref{lem_S^1-action}.

It remains to show that $\nu_i(F) = \nu_j(F)$ $(i,j \in [m], i \neq j)$ for any $F \in {\rm Isot}^r(\IS^3, L^{(m)})_{\id,\id}$. 
We apply Lemma~\ref{lem_annulus} to the pair $(\IS^3, S_i \cup S_j) \in {\cal C}(3,2)$. 
Take an index $\ell \in [m] - \{ i, j \}$ and let $N := \IS^3 - S_\ell$.  
Recall the annulus $E_{ij} := \pi^{-1}(C_{ij})$. 
The circles $S_i$ and $S_j$ are parallel with respect to $E_{ij}$ in $\IS^3$ and $E_{ij} \subset N$ since $C_{ij} \subset \IS^2 - \{ q_\ell \}$.  
The inclusion $i : S_i \subset N$ induces the injective homomorphism $i{}_\ast : \pi_1 (S_i, p_i) \to \pi_1 (N, p_i)$, 
since $S_i \cup S_\ell$ is the Hopf link in $\IS^3$ and $N$ is an open solid torus with the core $S_i$. 
Since $F \in {\rm Isot}(\IS^3, S_\ell)_{\id,\id}$, it follows that $F(E_{ij} \times I) \subset F(N \times I) = N$.
Therefore, $\nu_i(F) = \nu_j(F)$ by Lemma~\ref{lem_annulus}. 
\eenum 
\vspace*{-8mm} 
\epf 

\subsection{Some applications to foliations and fibrations by circles} \mbox{} 

Lemma~\ref{lem_annulus} has an application to the group of leaf-preserving diffeomorphisms on a manifold foliated by circles. 

\bnotb Suppose $(M, {\cal F})$ is a foliated $C^r$ manifold.
Let ${\rm Diff}_L^r(M, {\cal F})$ and ${\rm Isot}_L^r(M, {\cal F})$ denote the group of leaf-preserving $C^r$ diffeomorphisms and isotopies of $(M, {\cal F})$. 
Let ${\rm Isot}_L^r(M, {\cal F})_0 = \{ F \in {\rm Isot}_L^r(M, {\cal F}) \mid F_0 = \id_M \}$ and 
${\rm Diff}_L^r(M, {\cal F})_0 := \{ F_1 \mid F \in {\rm Isot}_L^r(M, {\cal F})_0 \}$. 
\enot 

\bexpb\label{exp_foliation} (Foliation) \ Suppose $(M, {\cal F})$ is a $C^r$ $n$-manifold 
equipped with a 1-dimensional $C^r$ foliation ${\cal F}$. 
If ${\cal F}$ includes two circle leaves $S_1, S_2 \subset {\rm Int}\,M$ which satisfies the following conditions, 
then the group ${\rm Diff}_L^r(M, {\cal F})_0$ admits a surjective quasimorphism $\phi : {\rm Diff}_L^r(M, {\cal F})_0 \to \IR$. 
\bit 
\itemI $S_1, S_2$ are parallel with respect to an annulus $E$ in $M$. % (for some orientations of $S_1$ and $S_2$). 
\itemII There exists a ${\cal F}$-saturated subset $N$ of $M$ such that \ \ (a) $E \subset N$ \ and 
\item[] \hsh (b) the inclusion induced homomorphism $i_1{}_\ast : \pi_1 (S_1) \to \pi_1 (N)$ is injective.
\eit 
If $E$ itself is ${\cal F}$-saturated, then we can take $N = E$ in (ii). 
\eexp 

\bpfb  
For each $i = 1,2$ take a distinguished point $p_i \in S_i$ and 
a universal covering $\pi_i : \IR \to \IR/\IZ \approx S_i$ for which $S_1$ and $S_2$ are $+$ parallel with respect to an orientation of $E$. 
%with is compatible with the orientation of $S_i$. 
Let $L = S_1 \cup S_2$. 
Then, for $(M, L) \in {\cal C}^r(n,2)$ we obtain the surjective quasimorphism 
$\nu = (\nu_1, \nu_2) : {\rm Isot}^r(M, L)_0 \to \IR^2$, which restricts to a group homomorphism
$\nu| : {\rm Isot}^r(M, L)_{\id, \id} \to \IZ^2$. 

By restricting $\nu$ to ${\rm Isot}_L^r(M, {\cal F})_0$ we obtain the following diagram with exact lows (cf. Fact~\ref{fact_restriction}). \\[2mm]
\hsh $\xymatrix@M+1pt{
 {\rm Isot}^r_L(M, {\cal F})_{\id, \id} \ar@{}[r]|*{\subset} \ar@{->>}[d]_{\nu|} & {\rm Isot}^r_L(M, {\cal F})_0 \ar@{->>}[d]_-{\nu} \ar@{->>}[r]^-{R} 
 & {\rm Diff}^r_L(M, {\cal F})_0 \ar@{->>}[d]^-{\widehat{\nu}} \\
A_{\cal F} \ar@{}[r]|*{\subset} & \IR^2 \ar@{->>}[r] & \IR^2/A_{\cal F} \\
\raisebox{18.5mm}{$
\bary[t]{c}
 \rotatebox{90}{$\supset$} \\[1mm] 
\IZ^2
\eary$} &  
}$ \hsp 
\btab[t]{l} 
$R(F) = F_1$, \\[2mm] 
$\nu$ is a surjective quasimorphism. \\[2mm] 
$\nu|$ is a group epimorphism \\[1.5mm]
\hsp \hsh onto $A_{\cal F} := {\rm Im}\,\nu| < \IZ^2$.  
\etab \\[-10mm] 
For any $F \in {\rm Isot}_L(M, {\cal F})_{\id, \id}$ we have $F \in {\rm Isot}^r(M, L)_{\id, \id}$ and 
$F(E \times I) \subset F(N \times I) = N$ since $N$ is ${\cal F}$-saturated. 
Hence, $\nu_1(F) = \nu_2(F)$ by Lemma~\ref{lem_annulus}. 
This implies that $A_{\cal F} \subset \IZ(1,1)$ and ${\rm rank}\,A_{\cal F} \leq 1$. 
Hence, by Proposition~\ref{prop_rank<m} there exists a group epimorphism $\chi : \IR^2/A_{\cal F} \to \IR$ such that \\
\hsh $\phi := \chi \widehat{\nu} : {\rm Diff}_L^r(M, {\cal F})_0 \lra \IR$ \ is a surjective quasimorphism. 
\epf 

\begin{compl}\label{comp1} In the proof of Example~\ref{exp_foliation}, if ${\cal G} < {\rm Diff}_L^r(M, {\cal F})_0$ and $\widehat{\nu}| : {\cal G} \to \IR^2/A_{\cal F}$ is surjective, 
then $\phi = \chi \,\widehat{\nu}| : {\cal G} \to \IR$ is also a surjective quasimorphism. 
\end{compl}

\bnotb Suppose $\pi : M \to B$ is a $C^r$ $(N, \Gamma)$ bundle (i.e., a $C^r$ fiber bundle with fiber $N$ and structure group $\Gamma < {\rm Diff}^r(N)$). 
Let ${\cal D}^r_\pi(M)$ and ${\cal I}^r_\pi(M)$ denote the groups of $C^r$ fiber-preserving bundle diffeomorphisms and isotopies of $\pi$ respectively. 
Let ${\cal I}^r_\pi(M)_0 = \{ F \in {\cal I}^r_\pi(M) \mid F_0 = \id_M \}$ and 
${\cal D}^r_\pi(M)_0 := \{ F_1 \mid F \in {\cal I}^r_\pi(M)_0 \}$ (the identity connected component of ${\cal D}^r_\pi(M)$). 
\enot 
\bexpb\label{exp_fibering} (Circle bundle) \ Suppose $\pi : M \to B$ is a $C^r$ $({\Bbb S}^1, \Gamma)$ bundle with ${\rm SO}(2) < \Gamma < {\rm Diff}^r({\Bbb S}^1)$. 
Then the group ${\cal D}^r_\pi(M)_0$ admits a surjective quasimorphism $\phi : {\cal D}^r_\pi(M)_0 \to \IR$. 
\eexp 

\bpfb Consider the 1-dimensional $C^r$ foliation ${\cal F} = \{ \pi^{-1}(b) \mid b \in B \}$ on $M$. 
Take an arc $C$ in $B$ and let $\partial C = \{ q_1, q_2 \}$, $S_1 = \pi^{-1}(q_1)$, $S_2 = \pi^{-1}(q_2)$ and $E := \pi^{-1}(C)$. 
Then, (i) the circle leaves  $S_1, S_2$ of ${\cal F}$ are parallel with respect to the annulus $E$ in $M$ (for some orientations of $S_1$ and $S_2$) 
and (ii) $E$ is ${\cal F}$-saturated. 
Therefore, we can apply the proof of Example~\ref{exp_foliation} and Complement~\ref{comp1}. 
Restricting $\nu$ to ${\cal I}^r_\pi(M)_0$ we obtain the following diagram. 

\hsh $\xymatrix@M+1pt{
{\cal I}^r_\pi(M)_0  \ar[d]_-{\nu} \ar[r]^-{R} & {\cal D}^r_\pi(M)_0 \ar[d]^-{\widehat{\nu}} \\
\IR^2 \ar@{->>}[r] & \IR^2/A_{\cal F}   
}$ \hsp 
\btab[t]{l} 
Since ${\rm SO}(2) < \Gamma$, it is seen that $\nu$ is surjective and so is $\widehat{\nu}$. \\[2mm] 
Hence, $\phi = \chi\, \widehat{\nu} : {\cal D}^r_\pi(M)_0 \to \IR$ is also a surjective quasimorphism. 
\etab 
\vspace*{-5mm} 
\epf

\end{document}